\newcommand{\del}[1]{\frac{\partial}{\partial #1}}
\newcommand{\indel}[1]{\partial/\partial #1}
\newtheorem{thm}{Theorem}[section]
\newtheorem{lemma}[thm]{Lemma}
\newtheorem{prop}[thm]{Proposition}
\newtheorem{corollary}[thm]{Corollary}
\theoremstyle{remark}
\newtheorem{rmk}[thm]{Remark}
\newtheorem{exam}[thm]{Example}
\newtheorem{defi}[thm]{Definition}
\numberwithin{equation}{section}
\begin{document}

\title{Foliated affine and projective structures}

%    author one information
\author{Bertrand Deroin}
\address{CNRS-Laboratoire AGM-Universit\'e de Cergy-Pontoise}
\email{bertrand.deroin@u-cergy.fr}
%\thanks{}

%    author two information
\author{Adolfo Guillot}
\address{Instituto de Matem\'aticas, Universidad Nacional Aut\'onoma de M\'exico, Ciudad Universitaria  04510,
Ciudad de M\'exico \\  Mexico}
\email{adolfo.guillot@im.unam.mx}
%\thanks{A.G. gratefully acknowledges support from grant PAPIIT-IN102518 (UNAM, Mexico)}

\subjclass[2010]{Primary 37F75, 53C12, 	57M50}

\keywords{Holomorphic foliation, projective structure, affine structure, Kodaira fibration}

%\date{\today}
%\dedicatory{}

\begin{abstract} We formalize the concepts  of holomorphic affine and projective structures  along the leaves of holomorphic foliations by curves on  complex manifolds. We show that many foliations admit  such structures, we provide local normal forms  for them at singular points of the foliation,  and we prove some index formulae in the case where the ambient manifold is compact. As a consequence of these, we establish that a regular foliation of general type on a compact algebraic manifold of even dimension does not admit a foliated projective structure. Finally, we  classify foliated affine and projective structures along regular foliations on compact complex surfaces.
\end{abstract}

\maketitle

%\setcounter{tocdepth}{1}
%\tableofcontents 

\section{Introduction}

For a one-dimensional holomorphic foliation on a complex manifold, a \emph{foliated projective structure} is a family of complex projective structures along the leaves of the foliation that vary holomorphically in the transverse direction. Particular cases of such structures are foliated translation structures, corresponding to global holomorphic vector fields tangent to the foliation and vanishing only at its singular points, and foliated affine structures, which are a key tool in the study of holomorphic vector fields without multivalued solutions carried out in~\cite{guillot-rebelo}. Some interesting families  of foliated projective structures are the isomonodromic foliations on moduli spaces of branched projective structures~\cite{Veech, McMullen, CDF, GP}; foliated projective structures also appear prominently in Zhao's classification of birational Kleinian groups \cite{zhao-bir}. As we will see, there are plenty more examples, and it seems that a theory deserves to be developed.  The aim of this article is to begin a systematic study both of these structures and of the closely related affine ones. It concerns chiefly the problems  of the existence of such structures on  compact foliated manifolds, of their local description at the singular points of the foliation,  and  of the relations of their local invariants with the global topology of the foliation and the manifold.

In a manifold of dimension~$n$,  a foliation~$\mathcal{F}$,  in a neighborhood of a  singular point~$p$,   may be defined by a vector field~$Z$ with singular set of codimension at least two, unique up to multiplication by a non-vanishing holomorphic function;  the projectivization of the linear part of~$Z$ at~$p$ is a local invariant of~$\mathcal{F}$, but the linear part in itself is not. In the presence of a generic foliated projective structure, a distinguished linear part of a vector field tangent to~$\mathcal{F}$ at~$p$ may be defined up to   sign: the  eigenvalues of this linear part become the \emph{ramification indices} $\nu_1, \ldots, \nu_n$ of the structure at~$p$ (Section~\ref{sec:norforproj}); they are well-defined up to ordering and up to a simultaneous change of sign (the ambiguity of the sign can be lifted for foliated affine structures). When the vector field is non-degenerate and linearizable with semi-simple linear part, these indices encode the ``cone angles'' induced by the foliated projective structure on each one of the~$n$ separatrices. Our first result, Theorem~\ref{thm:loc:proj}, affirms that, generically, a foliated projective structure in the neighborhood of a singular point  is determined by these indices.

Another result, an index theorem,  concerns the global properties of foliated projective structures, assuming that the ambient   manifold~$M$ is compact  and that both the singular points of the foliation~$\mathcal{F}$ and the foliated projective structure at them  satisfy a non-degeneracy condition. In Theorem~\ref{ourindthmproj}  we prove that, given a symmetric homogeneous polynomial \( \varphi(x_1, \ldots, x_{n+1} ) \) of degree \(n+1\), if \(\varphi_{\mathrm{odd}}\) denotes the odd part of \(\varphi\) in the variable \(x_{n+1}\), the quantity 
\[ \sum _{p\in \text{sing} (\mathcal F) } \frac{\varphi_{\mathrm{odd}} (\nu_1, \ldots , \nu_n, 1) }{\nu_1 \ldots \nu_n} \]
can be expressed as an explicit polynomial in the Chern classes of \( T M\) and \( T_{\mathcal F}\).  
Some instances of our result are the following ones (Examples~\ref{thmexasimplest} and~\ref{thmexase}): if~$n$, the dimension of~$M$, is odd, $n=2k+1$, 
\begin{equation}\label{for:projodd} \sum_{p\in \mathrm{Sing}(\mathcal{F})} \frac{\nu_1+\cdots+\nu_n}{\nu_1 \cdots \nu_n}=  c_1^{2k}(T_\mathcal{F})c_1(TM-T_\mathcal{F})  ;\end{equation}
and if it is even, 
\begin{equation}\label{for:projeven} \sum_{p\in \mathrm{Sing}(\mathcal{F})} \frac{1}{\nu_1 \cdots \nu_n}= c_1^{n}(T_\mathcal{F}).\end{equation}
This last formula only makes sense if~$n$ is even, but if the projective structure  reduces to an affine one, it is also valid for~$n$ odd (Theorem~\ref{ourdescartes}).  As usual, in these formulae, the left hand side vanishes if the singular set is empty. 

For all this to be of interest, we need to have a good knowledge of the foliations  which admit foliated affine and projective structures. 

Foliated affine structures are quite common. Some foliations can be shown to admit them almost by construction (like for the ``evident'' foliations on Inoue or Hopf surfaces, see Examples~\ref{exam-inoue} and~\ref{exam-hopf}), or because they admit a description that makes this patent, like elliptic fibrations  (Example~\ref{ex: elliptic fibrations}) or foliations on complex projective spaces (Example~\ref{exam-cpn}). In general, to a foliation~$\mathcal{F}$ on the manifold~$M$ corresponds a class~$\alpha_\mathcal{F}$ in  $H^1(M,K_\mathcal{F})$  that measures the obstruction for~$\mathcal{F}$ to admit a foliated affine structure (Section~\ref{sec:obst-aff}). There are situations where this group is altogether trivial; this allows, for instance, to prove that all foliations whose normal bundle is ample carry a foliated affine structure  (Lemma~\ref{l: criterion for affine foliated structures}; by adjunction, given a  sufficiently positive line bundle on the manifold~\(M\), every foliation on~$M$ having it for its cotangent bundle supports a foliated affine structure).   Further instances of foliated affine structures may be given by constructing  foliated   connections on some line bundles, and propagating them to the tangent bundle of the foliation~(Section~\ref{sec:otherfol}). This allows, for instance,  to prove that any foliation on a Calabi-Yau manifold (Corollary~\ref{cor:tor-k2}) or on a generic hypersurface of~$\mathbf{P}^3$ (Example~\ref{ex:hyp-p3}) admits a foliated affine structure.

Foliated projective structures can be directly shown to exist in some cases, like on Hilbert foliations, suspensions and  turbulent foliations (Examples~\ref{ex:susp} and~\ref{ex:turb}). In a way similar to the affine one, for a given foliation~$\mathcal{F}$ there is a class~$\beta_\mathcal{F}$
in~$H^1(M,K^2_\mathcal{F})$ which vanishes if and only if it admits a foliated projective structure. This obstruction may be calculated in some cases, and, for instance, this allows to show that every foliation in the product of a curve with~$\mathbf{P}^1$ admits a foliated projective structure, while not always a foliated affine one (Proposition~\ref{p: foliations on products}). 

Despite these positive results, there exist   foliations that do not support any foliated projective structure. Zhao recently established that the universal curve of genus \(g\geq 2\), considered as a foliation, does not carry a foliated projective structure~\cite{zhao}. His argument also shows that this is actually true for any Kodaira fibration (a non-isotrivial, holomorphic one). Further obstructions for the existence of foliated projective structures follow from our index formulae. For instance, if a regular foliation on a complex compact surface admits a foliated projective structure, its signature vanishes~(Corollary~\ref{zerosign}). This gives an alternative proof of the non-existence of a foliated projective structure on a Kodaira fibration, but, more importantly, it implies, through Brunella's classification of regular foliations on surfaces~\cite{brunella-nonsing},  that if  a regular foliation on a surface of general type which is not a fibration admits a foliated projective structure, the surface is a quotient of the bidisk, with the foliation being either the vertical or horizontal one. (On its turn, this last result constitutes a key ingredient in Zhao's classification of birational Kleinian groups in dimension two~\cite{zhao-bir}.) Another consequence of our index theorem is that, more generally, in even dimensions, regular foliations of general type do not support foliated projective structures (Proposition~\ref{typgen-noproj}).

These results   allow us to  fully  classify foliated affine and projective structures along regular foliations on surfaces (Corollary~\ref{c: classification}, Section~\ref{sec:class2dreg}).
  
There are some situations that are closely related to the ones  discussed here, but which do not fall within the scope of this article. The structures we consider   are defined on the actual manifold, and not on an infinite cover of it, as in Griffith's work on the uniformization of Zariski open subsets of algebraic varieties~\cite{Griffiths} (which uses foliated projective structures along a covering of a pencil) or in the ``covering tubes'' of a foliation, as in Ilyashenko's notion of \emph{simultaneous uniformization} (see~\cite{ily} and references therein). Holomorphic foliations by curves  which are hyperbolic as Riemann surfaces carry naturally a leafwise hyperbolic (hence projective) structure; the hyperbolic metric varies continuously in the transverse direction~\cite{verjovsky} (even in the presence of singular points~\cite{linsneto-sim, candel-gm}) and, moreover, plurisubharmonically~\cite{brunella-pluri}, but the leafwise hyperbolic geometry will very seldomly give a foliated projective structure in the sense we  consider here.

We assume that the reader is familiar with both the local and global theory of foliations by curves on complex manifolds, like the material covered in the first chapters of~\cite{brunella}.

We thank ShengYuan Zhao and Serge Cantat for stimulating conversations on this topic, and Omar Antolín, S\'ebastien Boucksom, Sorin Dumitrescu and Fernando Sanz for pointing out helpful references. B.D. is grateful to IMPA and Universidad Federal Fluminense for the stimulating working conditions where part of this work was developed. A.G. thanks the hospitality of the \'Ecole Normale Sup\'erieure de Paris during the sabbatical leave where this work began; he gratefully acknowledges support from grant PAPIIT-IN102518 (UNAM, Mexico).

\section{Definitions and the problem of existence} %\label{sec:defetex}
We recall  the notions of  affine and projective structures  on curves, and define similar notions for singular holomorphic foliations by curves. We also give various existence criteria showing that many foliations carry such structures, and examples of foliations that do not.

\subsection{Foliated affine structures} \label{ss: foliated affine structure}

An \emph{affine structure} on a curve is an atlas for its complex structure taking values in~$\mathbf{C}$ whose changes of coordinates lie within the affine group~$\{z\mapsto az+b\}$.

The \emph{affine distortion}  of a local biholomorphism between open subsets of \({\bf C}\) is the operator 
\[\mathcal L (f) := \frac{f'' }{f'  }dz, \]
which plays a fundamental role in the study of affine structures. It vanishes precisely when \(f\) is an affine map. A simple computation shows that,  
for the composition of two germs of biholomorphisms between open sets of \(\mathbf C\),  
\begin{equation}\label{eq:coclogder} \mathcal L ( f\circ g ) = \mathcal L(g) + g^* \mathcal L(f).  \end{equation}
Hence, the affine distortion of a biholomorphism between open subsets of curves equipped with affine structures does not depend on the chosen affine charts.    
Given two affine structures on a curve~$C$,  the affine distortion of the identity map measured in the corresponding affine charts, namely the one-form \( \mathcal L ( \psi\circ\phi^{-1}) \) for \(\phi \) a chart of the first affine structure and \(\psi\) a chart of the second, gives a globally well-defined one-form on~$C$ which vanishes if and only if the affine structures agree. Reciprocally, given an affine structure and a one-form \(\alpha \) on~$C$, if~$\alpha$ reads~$a(z)dz$ in some affine chart of the affine structure, the maps  given in this chart  by the solutions~$\psi$ of~$\psi''= a  \psi'$  give a second globally-defined  affine structure on~$C$. An easy consequence of equation \eqref{eq:coclogder} is that this provides the moduli space of affine structures on \(C\) with the structure of an affine space directed by the vector space of holomorphic one-forms on~\(C\).

Given an affine structure on a curve, the family of vector fields which are constant in the coordinates of the affine structure is well-defined. Such a family is the one of flat sections of a holomorphic connection on the tangent bundle of~\(C\). Reciprocally, given a holomorphic connection on the tangent bundle of the curve, one can define the atlas of charts where the flat sections of the connection are constant vector fields. A change  of coordinates of this atlas maps a  constant vector field  to another constant vector field, and hence it belongs to the affine group. We thus retrieve an affine complex structure on the curve. We deduce that there is a canonical correspondence between affine structures on a curve and connections on its tangent bundle. In particular, the only compact curves admitting affine structures are elliptic ones (see~\cite{benzecri}, or Theorem~\ref{ourdescartes}   in Section~\ref{sec:affindex}). On such a curve, there is a canonical affine structure coming from its uniformization by \( {\mathbf C}\), or, equivalently, by the integration of a given non-identically zero holomorphic one-form.

We will adopt both points of view in order to extend the definition of affine structures on curves to the context of unidimensional singular holomorphic foliations on complex manifolds.  

\subsubsection{The foliated setting}

Let us begin by recalling that a \emph{singular holomorphic foliation \(\mathcal F\) of dimension one on a complex manifold \(M\)} is defined by the data of a covering   by open sets \(\{U_i\}_{i\in I}\) of \(M\)  and a family \(\{Z_i\}_{i\in I}\) of holomorphic vector fields \(Z_i\) on \(U_i\), such that the vanishing locus of~\(Z_i\) in \(U_i\) has codimension at least two, and that on the intersection~\(U_i\cap U_j\) of two open sets of the covering, the   vector fields \(Z_i\) and \(Z_j\) are proportional, namely,  $Z_i=g_{ij}Z_j$ for a function $g_{ij} :U_i \cap U_j\rightarrow {\mathbf C} ^*$. Two such data are regarded as giving equivalent foliations if the subsheafs of the sheaf of sections of the tangent bundle \(TM\) of \(M\) generated by the vector fields \(Z_i\) are the same. This subsheaf is called the \emph{tangent sheaf} of the foliation; it is locally free, and corresponds to the sheaf of sections of a holomorphic line bundle, the \emph{tangent bundle} of the foliation, that we will denote by~\(T_{\mathcal F}\).  We then have a morphism \( T_{\mathcal F} \rightarrow T M \), which vanishes only over the singular set of~\(\mathcal{F}\). This map completely characterizes the foliation \(\mathcal F\), and can be used as an alternative definition of a foliation.  The \emph{canonical} bundle of the foliation is the bundle \( K_{\mathcal F} :=  T^* _{\mathcal F}\).

A first definition of a \emph{foliated affine structure} is the following. 

\begin{defi} 
Let~$M$ be a complex manifold,  and  $\mathcal{F}$ a singular holomorphic  foliation by curves on~$M$. A \emph{holomorphic foliated affine   structure} on \(\mathcal F\)  is an open cover~$\{U_i\}$ of~$M\setminus \mathrm{Sing}(\mathcal{F})$ and submersions~$\phi_i:U_i\to \mathbf{C}$   transverse to~$\mathcal{F}$ such  that, in restriction to a leaf~$L$ of~$\mathcal{F}$, $(\phi_i|_L)\circ(\phi_j|_L)^{-1}$ is an affine map of~$\mathbf{C}$.  
\end{defi}

In this definition, no condition is explicitly imposed on the singular set of the foliation.  The affine geometry of the leaves as they approach it will be studied in Section~\ref{sec:local}.

There are foliations without any foliated affine structure, e.g., those having a compact leaf of genus different from one. Notwithstanding, and in contrast with the scarcity of curves having affine structures, there are many foliations that support them.

\begin{exam} A holomorphic vector field with isolated singularities on a manifold of dimension at least two,  e.g.  a holomorphic vector field on a compact K\"ahler manifold~\cite{kob}, induces a foliated affine structure whose changes of coordinates are not only affine but are actually translations (we will call these  \emph{foliated translation structures}).  
\end{exam}

\begin{exam}\label{exam-cpn}
The orbits of a homogeneous polynomial vector field on~$\mathbf{C}^{n+1}$ are preserved by homotheties, and  the vector field defines a foliation on~\(\mathbf{P}^n\). Not only does the vector field define a foliation; it also endows it with  a foliated affine structure: the homogeneous vector field induces a  translation structure along its phase curves, and multiplication by scalars acts affinely in the translation charts. One can show that, on~\(\mathbf{P}^n\), any foliation  and any foliated affine structure on it are obtained in this way. \end{exam}

\begin{exam}[Inoue surfaces]\label{exam-inoue} The Inoue surfaces~$S_M$, $S_N^{(+)}$ and~$S_N^{(-)}$  are compact complex non-K\"ahler surfaces which are quotients of~$\mathbf{H}\times \mathbf{C}$ by groups of affine transformations of~$\mathbf{C}^2$~\cite{inoue}. For the surfaces~$S_M$, the associated action preserves the two foliations of~$\mathbf{H}\times \mathbf{C}$ and is affine on the leaves of both of them (with respect to the tautological  affine structure on~$\mathbf{C}$, to the one inherited from the inclusion~$\mathbf{H}\subset\mathbf{C}$ for~$\mathbf{H}$). The two foliations induced in~$S_M$ admit thus foliated affine structures. For the surfaces~$S_N^{(+)}$ and~$S_N^{(-)}$, the action on~$\mathbf{H}\times \mathbf{C}$ preserves the foliation given by the fibers of the first factor, and acts affinely upon its leaves. The induced foliations are also endowed with foliated affine structures. 
\end{exam}

\begin{exam}[Hopf surfaces] \label{exam-hopf} Hopf surfaces are compact complex surfaces whose universal covering is biholomorphic to~$\mathbf{C}^2\setminus\{0\}$~\cite[Ch.~V, Section~18]{bphv}. Primary ones are quotients of~$\mathbf{C}^2\setminus\{0\}$ by cyclic groups  generated by contractions  which admit the normal form 
\begin{equation}\label{eq:lochopf} (x,y)\mapsto (\alpha x+\lambda y^n,\beta y),\end{equation}	
with~$(\alpha-\beta^n)\lambda=0$, $n\geq 1$. If~$\alpha=\beta$, the linear homogeneous vector fields are preserved by the contraction and induce in the quotient  nowhere-vanishing vector fields; their foliations   have thus a foliated translation  structure. In the general case, the same happens for the linear diagonal vector fields~$Ax\indel{x}+By\indel{y}$ if~$\lambda=0$, or the ``Poincaré-Dulac'' ones $(nx+\mu y^n)\indel{x}+y\indel{y}$  if~$\lambda\neq 0$.
	
The coordinate vector field~$\indel{x}$ is not fixed by the contraction, but is preserved up to a constant factor, and the  foliation it induces has thus an invariant foliated affine structure;  both the foliation and the structure descend to the quotient. (In particular cases, the same happens for other constant vector fields, but this case reduces to the previous one by means of a linear change of coordinates.)

By~\cite{brunella-nonsing}, there are no further  foliations on primary Hopf surfaces.  Consequently,  \emph{every foliation on a primary Hopf surface has a foliated affine structure}.  
	
Secondary Hopf surfaces are unramified quotients of primary ones, quotients of~$\mathbf{C}^2\setminus\{0\}$ by the action of the semidirect product of the infinite cyclic group~$G$ generating the associated primary  surface and a finite group~$H$ that normalizes it. In what follows we rely on the classification due to Kato~\cite{kato-hopf, kato-hopf-err}. There are coordinates where~$G$ is generated  by~(\ref{eq:lochopf}) and where~$H$ is a subgroup of~$\mathrm{GL}(2,\mathbf{C})$. A foliation on the secondary Hopf surface  is induced by a vector field  generating the foliation of the primary one, and which, from Kato's classification, is easily seen to be preserved by~$H$ up to a constant factor.  Thus, \emph{every foliation on a secondary Hopf surface has a foliated affine structure as well}.  \end{exam}

\subsubsection{Foliated connections}

Let us now turn to a more intrinsic equivalent definition of a foliated affine structure  in terms of foliated connections,   which will  enable the construction of more examples. 

Given a foliation~$\mathcal{F}$ and a sheaf  \(\mathcal S\) of \(\mathcal O_S\)-modules, a \emph{foliated connection  on \(\mathcal S\) relative to~$\mathcal{F}$} is a differential operator \( \nabla:\mathcal S \rightarrow \mathcal O (K_{\mathcal F}) \otimes \mathcal S\)  which satisfies the Leibniz rule 
\begin{equation} \label{eq: Leibniz} \nabla ( f s ) = d_{\mathcal F} f \otimes s + f \nabla s , \end{equation}
for every~\( f\in \mathcal O\) and every~\(s\in \mathcal S\). (In general, we will consider~$\mathcal{F}$ as fixed, and omit it from the discussion.) A foliated connection on a holomorphic vector bundle is a foliated connection on its sheaf of sections.

In particular,  a foliated connection on~$T_\mathcal{F}$ is  a map
\( \nabla : T_{\mathcal F} \rightarrow T_{\mathcal F} \otimes K_{\mathcal F} = \mathcal O _M\)
which  to a vector field~$Z$ assigns a holomorphic  function~$\nabla(Z)$, its \emph{Christoffel symbol},  satisfying the Leibniz rule
\begin{equation}\label{conn-fun}\nabla(fZ)=Zf+f\nabla(Z).\end{equation}
Let us see that such a connection is equivalent to a foliated affine structure. 

Given a  foliated connection~$\nabla$ on~$T_\mathcal{F}$ and~$p\notin\mathrm{Sing}$, if~$Z$ is a vector field tangent to~$\mathcal{F}$ that does not vanish at~$p$ and such that~$\nabla(Z)\equiv 0$ (if~$Z$ is \emph{parallel}), if~$\phi$ is a function  such that~$d\phi(Z)\equiv 1$, $\phi$ is part of an atlas of a foliated affine structure  that depends only on~$\nabla$ (it is not difficult to see that such a~$Z$ and such a~$\phi$ always exist).  

For the other direction,  let~$\mathcal{F}$ be a foliation  endowed with a   foliated affine structure~$\sigma_0$. Let~$Z$ be a vector  field defined on the open set~$U\subset M$, tangent to~$\mathcal{F}$ (with a singular set of codimension two), and denote by~$\sigma_Z$ the foliated affine structure induced by~$Z$ in~$U\setminus\mathrm{Sing}(\mathcal{F})$. The difference~$\sigma_Z-\sigma_0$ is a section~$\alpha$ of~$K_\mathcal{F}$ over~$U$, which vanishes if and only if~$\sigma_0$ is induced by~$Z$. The holomorphic function~$\alpha(Z)$, defined, in principle, in~$U\setminus\mathrm{Sing}(\mathcal{F})$, extends, by Hartog's theorem, to all of~$U$. Define a  foliated connection~$\nabla$ on~$T_\mathcal{F}$ by $\nabla(Z)=\alpha(Z)$. Let us verify that Leibniz's rule~(\ref{conn-fun}) takes place. We will do so locally in a curve, in a coordinate~$z$ where~$Z=\indel{z}$. A chart for  the affine structure induced by~$f(z)\indel{z}$ is~$\int^z d\xi/f(\xi)$, and thus
\[  \sigma_{Z}-\sigma_{fZ}= -\frac{f'}{f}dz.\]
Hence, the contraction of~$\sigma_{fZ}-\sigma_0$ with~$fZ$ yields $Zf+f\alpha(Z)$, in agreement with formula~(\ref{conn-fun}).

Observe that the definition of  foliated affine structures via   foliated connections  has the advantage of not needing to distinguish between regular and singular points of the foliation.

\begin{lemma}[Extension Lemma {\cite[Prop.~8]{guillot-rebelo}}]  \label{l: extension lemma} Let~$M$ be a manifold, $\mathcal{F}$   a foliation on~$M$, $p\in M\setminus\mathrm{Sing}(\mathcal{F})$. Let~$X$ be a meromorphic vector field defined in a neighborhood of~$p$ whose divisor of zeros and poles~$D$ is invariant by~$\mathcal{F}$ and which is tangent to~$\mathcal{F}$ away from it. Then, in a neighborhood of~$p$, the foliated affine structure induced by~$X$ away from~$D$ extends to~$D$ in a unique way. \end{lemma}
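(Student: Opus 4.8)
The plan is to reduce the statement to a local computation near $p$: after passing to the foliated-connection description of foliated affine structures, the connection induced by $X$ acquires a Christoffel symbol that is a priori meromorphic with poles along $D$, and the $\mathcal F$-invariance of $D$ is precisely what forces it to be holomorphic. Uniqueness of the extension is then automatic, by density of $U\setminus D$.

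Concretely, I would choose coordinates $(z,w_1,\dots,w_{n-1})$ centred at $p$ in which $\mathcal F$ is generated by $\del{z}$, the leaves being the slices $\{w=\mathrm{const}\}$, and work on a polydisc $U$ free of singular points of $\mathcal F$. On the dense open set $U\setminus D$ the vector field $X$ is holomorphic, nonvanishing and tangent to $\mathcal F$, so there $X=f\,\del{z}$ with $f$ nonvanishing holomorphic; since $\del{z}$ is a holomorphic frame of $T_\mathcal F$, $f$ extends meromorphically to $U$ with $(f)=D$. Under the correspondence between foliated affine structures and foliated connections on $T_\mathcal F$ described above, the foliated affine (indeed translation) structure induced by $X$ on $U\setminus D$ is the one attached to the foliated connection $\nabla$ on $T_\mathcal F$ characterised by $\nabla X\equiv 0$; feeding $X=f\,\del{z}$ into the Leibniz rule \eqref{conn-fun} gives
\[ \nabla\!\Bigl(\del{z}\Bigr)=-\frac{\partial_z f}{f}. \]
Thus it suffices to show that this function is holomorphic on all of $U$: it then defines a foliated connection on $T_\mathcal F$ over $U$, equivalently a foliated affine structure on $U$, restricting to the given one off $D$. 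For uniqueness, any two foliated affine structures on $U$ agreeing on $U\setminus D$ differ by a holomorphic section of $K_\mathcal F$ over $U$ that vanishes on the dense set $U\setminus D$, hence vanishes identically.

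The holomorphy is where the hypothesis enters. Since $D$ is $\mathcal F$-invariant, near $p$ its support is a union of leaves, so in the chosen coordinates it is the full preimage, under $(z,w)\mapsto w$, of an analytic hypersurface $V$ in the $w$-polydisc; in particular there is a meromorphic function $h$ of $w$ alone with $(h)=D$ on $U$. Then $f/h$ is a meromorphic function on $U$ with trivial divisor, i.e.\ a nonvanishing holomorphic function $u$, so $f=u\,h(w)$ and
\[ \nabla\!\Bigl(\del{z}\Bigr)=-\frac{\partial_z f}{f}=-\frac{\partial_z u}{u}, \]
which is holomorphic on all of $U$, as needed.

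The only point requiring any care is the structural assertion that an $\mathcal F$-invariant hypersurface through the regular point $p$ is, in adapted coordinates, cut out by a function of the transverse variables alone; this is immediate from the geometric meaning of an $\mathcal F$-invariant divisor, but it is the step that genuinely uses the hypothesis — without it the pole of $\partial_z f/f$ along $D$ need not cancel and the structure does not extend.
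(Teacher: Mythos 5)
Your proposal is correct and follows essentially the same route as the paper: both show that the connection $\nabla$ determined by $\nabla X\equiv 0$ has holomorphic Christoffel symbol on a holomorphic generator of $\mathcal F$, the invariance of $D$ being what kills the pole. The only cosmetic difference is that the paper encodes invariance intrinsically as $Zf_i=h_if_i$ (so $\nabla(Z)=-\sum n_ih_i$) rather than straightening the foliation and taking the defining function of $D$ to depend only on the transverse variables, but this is the same computation in a flow box.
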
 
\begin{proof}  	Let~$Z$ be a non-vanishing holomorphic vector field defining~$\mathcal{F}$ in a neighborhood of~$p$. Let~$X=f_1^{n_1}\cdots f_k^{n_k}Z$, for $n_i\in\mathbf{Z}$ and reduced holomorphic functions~$f_i$ such that~$Zf_i$ divides~$f_i$, say~$Zf_i=h_if_i$ for some holomorphic function~$h_i$. Outside the divisor of zeros and poles of~$X$, $X$ induces a foliated connection~$\nabla$ such that~$\nabla(X)\equiv 0$, for which
\[\nabla(Z)=\nabla\left(\left(\prod f_i^{-n_i}\right)X\right)=\left(\prod f_i^{n_i}\right)Z\left(\prod f_i^{-n_i}\right)=-\sum_{i=1}^k \frac{n_i}{f_i} Zf_i =-\sum_{i=1}^k  n_i h_i,\]
and~$\nabla$ extends holomorphically to a full neighborhood of~$p$. \end{proof}	
 
Let us see how one can concretely apply this lemma to produce foliated affine structures.

\begin{exam}[Elliptic fibrations] \label{ex: elliptic fibrations} Every elliptic fibration with non-singular fibers admits a foliated affine structure.  Indeed, a smooth elliptic curve carries a canonical affine structure, which varies holomorphically with the elliptic curve.  This structure can be extended to the singular fibers. In the case of the universal elliptic curve, this is just a corollary of the fact that the Hodge bundle of abelian differentials on the fibers exists~\cite{Zvonkine}. (The fact  that the Chern class of this bundle does not vanish shows that one cannot reduce this affine structure to a translation structure.) In the general case one can argue as follows: first build a non-identically zero meromorphic section of \( T_{\mathcal F}\)  whose divisor  of zeros and poles is supported on a union of fibers. To do so, one can apply Corollary~12.3 in~\cite[Ch.~V]{bphv} to get a meromorphic volume form~$\omega$ on the total space  whose divisor of zeros and poles is supported on a finite union of fibers, the desired section of \(T_{\mathcal F}\) being the gradient vector field  of a meromorphic function defined on the base with respect to~$\omega$. On the fibers on which this vector field is regular, it induces the canonical affine structure.  Lemma \ref{l: extension lemma} then shows that this structure extends to a globally defined foliated affine structure.  
\end{exam}

\subsubsection{A cohomological obstruction}\label{sec:obst-aff} 
	
For general fibered spaces, there is a classical cohomological obstruction for the existence of a connection. In our setting,  there is a natural class  $\alpha_{\mathcal F}$ in $ H^1 (M ,K_{\mathcal F})$, whose vanishing is equivalent to the existence of a foliated connection on \( T_{\mathcal F}\), or, equivalently, of a foliated affine structure on~\(\mathcal F\). 	Let us recall this construction in our case. Observe that, locally, foliated affine structures exist, e.g. the translation structures associated to  vector fields generating~\(\mathcal F\). Let \( \{U_i\}_{i\in I}\) be a covering by open sets of \(M\) so  that  a foliated affine connection on~$T_\mathcal{F}$, $\nabla_i$, is defined on each \(U_i\). In the intersection \(U_i\cap U_j\), the difference~$\nabla_i-\nabla_j$ is a section~\(\alpha_{ij}\)   of \(K_\mathcal F\) on \(U_i\cap U_j\). Moreover,   \( (\alpha_{ij})_{ij}\) is a cocycle. It is easy to see that the cohomology class   \(\alpha_\mathcal{F}\)  in \( H^1(M,K_{\mathcal F})\) induced by \((\alpha_{ij})_{ij}\) does not depend on the choices made. To construct a globally defined   connection,   we need  to modify each affine connection \(\nabla_i\) on~\(U_i\) by the addition of a section~\(\alpha_i\)  of~\(K_\mathcal F\), \(\nabla_i'=\nabla_i+\alpha_i\), so that the  \(\nabla_i'\)'s   coincide on the intersection of their domains. This means that on~\(U_i\cap U_j\), \( \alpha_i - \alpha_j= \alpha_{ij}\), which amounts to saying that the class \( \alpha_\mathcal{F} \) in \( H^1 (M, K_{\mathcal F})\)  is trivial. Hence, \emph{a foliated affine structure exists if and only if the class \(\alpha_{\mathcal F}\) vanishes}.  This discussion also shows that, if non empty, \emph{the moduli space of foliated affine structures on a given singular holomorphic foliation~$\mathcal{F}$ is an affine space directed by the space \(H^0(K_{\mathcal F})\)}.  In particular, a foliated affine structure is unique whenever the canonical bundle does not have a non-identically zero section.  We  exhibit some foliations supporting unique foliated affine structures in Section~\ref{sec:class2dreg}.

We next derive a criterion for the existence of foliated affine structures which is a consequence of Kodaira's vanishing theorem and Serre's duality:

\begin{lemma}\label{l: criterion for affine foliated structures} 
Assume that \(\mathcal F\) is a singular holomorphic foliation by curves on a compact manifold~$M$ of dimension \(n>1\). If either \(T_{\mathcal F}\) or  \(\wedge ^{n-1} \mathcal{N}_{\mathcal F} \) is ample, then \(\mathcal F\) carries a holomorphic foliated affine structure.
\end{lemma}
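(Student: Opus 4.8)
The plan is to show that the obstruction class $\alpha_{\mathcal F}\in H^1(M,K_{\mathcal F})$ of Section~\ref{sec:obst-aff} vanishes under either hypothesis, so that by the discussion there a foliated affine structure exists. By Serre duality on the compact $n$-dimensional manifold $M$, one has $H^1(M,K_{\mathcal F})\cong H^{n-1}(M, K_M\otimes T_{\mathcal F})^{*}$, so it suffices to prove the vanishing of $H^{n-1}(M,K_M\otimes T_{\mathcal F})$ in each case. The idea is then to recognize $K_M\otimes T_{\mathcal F}$ as $K_M$ twisted by an ample line bundle and invoke Kodaira's vanishing theorem, which gives $H^q(M,K_M\otimes L)=0$ for $q>0$ whenever $L$ is ample.

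First I would treat the case where $T_{\mathcal F}$ is ample. Then $L:=T_{\mathcal F}$ is ample, and Kodaira vanishing immediately yields $H^{q}(M,K_M\otimes T_{\mathcal F})=0$ for all $q\ge 1$, in particular for $q=n-1\ge 1$ (using $n>1$). Dualizing back, $H^1(M,K_{\mathcal F})=0$, so $\alpha_{\mathcal F}=0$ and $\mathcal F$ carries a foliated affine structure. Second, for the case where $\wedge^{n-1}\mathcal N_{\mathcal F}$ is ample, the key algebraic identity to record is that the determinant of the tangent bundle splits along the exact sequence $0\to T_{\mathcal F}\to TM\to \mathcal N_{\mathcal F}\to 0$ (valid as an exact sequence of sheaves, with $\mathcal N_{\mathcal F}$ possibly non-locally-free along $\mathrm{Sing}(\mathcal F)$, but this is a codimension-$\ge 2$ locus so determinants still behave well): $\det TM = T_{\mathcal F}\otimes \det \mathcal N_{\mathcal F}$, i.e. $K_M = K_{\mathcal F}\otimes (\wedge^{n-1}\mathcal N_{\mathcal F})^{*}$, equivalently $K_M\otimes T_{\mathcal F} = (K_{\mathcal F})^{\otimes 2}\otimes(\wedge^{n-1}\mathcal N_{\mathcal F})^{*}$. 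A cleaner route: $K_M\otimes T_{\mathcal F}=\big(\wedge^{n-1}\mathcal N_{\mathcal F}\otimes K_{\mathcal F}\big)\otimes T_{\mathcal F}= \wedge^{n-1}\mathcal N_{\mathcal F}$ is wrong in general, so instead I would directly write $K_M\otimes T_{\mathcal F}=K_M\otimes K_{\mathcal F}^{*}$ and use $K_M\otimes K_{\mathcal F}^{*}=(\wedge^{n-1}\mathcal N_{\mathcal F})^{*}\otimes K_{\mathcal F}\otimes K_{\mathcal F}^{*}$... I would reorganize this computation carefully so that $K_M\otimes T_{\mathcal F}$ appears as $K_M$ tensored with an ample bundle; the honest statement is $K_M\otimes T_{\mathcal F}\cong K_M\otimes\big(\wedge^{n-1}\mathcal N_{\mathcal F}\big)\otimes K_M^{?}$—the bookkeeping here is the one place to be scrupulous, and amounts to: from $K_M=K_{\mathcal F}\otimes(\wedge^{n-1}\mathcal N_{\mathcal F})^{*}$ we get $T_{\mathcal F}=K_{\mathcal F}^{*}=K_M^{*}\otimes(\wedge^{n-1}\mathcal N_{\mathcal F})^{*}$, hence $K_M\otimes T_{\mathcal F}=(\wedge^{n-1}\mathcal N_{\mathcal F})^{*}$, and dualizing, $H^{n-1}(M,K_M\otimes T_{\mathcal F})^{*}\cong H^1(M,\wedge^{n-1}\mathcal N_{\mathcal F})$. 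That is not obviously Kodaira-shaped, so I would instead apply Kodaira–Nakano or Le Potier vanishing to the ample bundle $\wedge^{n-1}\mathcal N_{\mathcal F}$ directly: since $\wedge^{n-1}\mathcal N_{\mathcal F}=(K_M\otimes T_{\mathcal F})^{*}$ is ample, its dual $K_M\otimes T_{\mathcal F}$ has ample dual, and $H^{n-1}$ of it can be handled by Serre-dualizing once more to $H^1(M,\mathcal O(\wedge^{n-1}\mathcal N_{\mathcal F}\otimes K_M)\otimes K_M^{*})$—again not clean.

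Given the above, the sharper plan for the second case is to apply Kodaira vanishing in the form $H^q(M,K_M\otimes A)=0$, $q\ge1$, to the ample line bundle $A$, after identifying the correct $A$. Writing $\delta:=\det\mathcal N_{\mathcal F}=\wedge^{n-1}\mathcal N_{\mathcal F}$ and using $K_M=K_{\mathcal F}\otimes\delta^{*}$, one has $K_{\mathcal F}=K_M\otimes\delta$, so $H^1(M,K_{\mathcal F})=H^1(M,K_M\otimes\delta)=0$ by Kodaira vanishing applied to the ample bundle $A=\delta=\wedge^{n-1}\mathcal N_{\mathcal F}$ — and this needs no Serre duality at all, which I now realize also streamlines the first case: if $T_{\mathcal F}$ is ample then $H^1(M,K_{\mathcal F})=H^1(M,K_M\otimes(K_M^{*}\otimes K_{\mathcal F}))$, and since $K_M^{*}\otimes K_{\mathcal F}=K_M^{*}\otimes K_M\otimes\delta=\delta$ we are in the same situation, OR more simply Serre-dualize $H^1(M,K_{\mathcal F})\cong H^{n-1}(M,K_M\otimes T_{\mathcal F})^{*}$ and split into whether $T_{\mathcal F}$ or $\delta$ is the ample twist of $K_M$. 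The step I expect to be the main obstacle — really the only subtle point — is justifying the determinant identity $K_M=K_{\mathcal F}\otimes(\wedge^{n-1}\mathcal N_{\mathcal F})^{*}$ and the ampleness transfer across the singular locus of $\mathcal F$: one must check that $\mathcal N_{\mathcal F}$, defined as the quotient sheaf $TM/T_{\mathcal F}$, has a well-defined determinant line bundle agreeing with $K_M^{*}\otimes K_{\mathcal F}$ everywhere (this is standard since $\mathrm{Sing}(\mathcal F)$ has codimension $\ge 2$, so reflexive-hull / Hartogs arguments apply), and that "$\wedge^{n-1}\mathcal N_{\mathcal F}$ ample" is meant precisely as ampleness of this line bundle. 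Once that is in place, the two applications of Kodaira vanishing close the proof in a couple of lines; I would also remark that $n>1$ is used exactly to guarantee the relevant cohomological degree is positive, and that the conclusion recovers the ample-normal-bundle statement quoted in the introduction as the special case $n=2$, where $\wedge^{n-1}\mathcal N_{\mathcal F}=\mathcal N_{\mathcal F}$.
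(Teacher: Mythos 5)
Your final argument, once the false starts are stripped away, is exactly the paper's proof: for $\wedge^{n-1}\mathcal N_{\mathcal F}$ ample you use the adjunction identity $K_{\mathcal F}=K_M\otimes\wedge^{n-1}\mathcal N_{\mathcal F}$ and apply Kodaira vanishing directly to get $H^1(M,K_{\mathcal F})=0$, and for $T_{\mathcal F}$ ample you Serre-dualize $H^1(M,K_{\mathcal F})\cong H^{n-1}(M,K_M\otimes T_{\mathcal F})^*$ and apply Kodaira vanishing again, using $n>1$ so that $n-1\geq 1$. This is correct and is essentially the same route as the paper's proof, including the observation that the adjunction formula for $\det\mathcal N_{\mathcal F}$ is the only point requiring care.
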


\begin{proof} Notice that since~$M$ has an ample line bundle, it is projective. 
Recall the Kodaira vanishing theorem: given an ample divisor \(D\) on \(M\), \(H^q (K_M+D) =0\) for any \(q>0\).  By the adjunction formula,  $K_M + \wedge^{n-1} \mathcal{N}_{\mathcal F}= K_{\mathcal F}$, so if~$\wedge^{n-1} \mathcal{N}_{\mathcal F}$ is ample,  \( H^1 (K_{\mathcal F})=0\), which implies that \(\mathcal F\) admits a holomorphic foliated affine structure. Now, using Serre's duality, Kodaira's vanishing theorem also furnishes the following: for every ample divisor \(D\), \( H^1 (D^*) =0\). Hence, if \( T_{\mathcal F}\) is ample, the foliation~\(\mathcal F\)  carries a holomorphic foliated affine structure as well.  \end{proof}

Notice that under the assumption that \(T_{\mathcal F}\) is ample, a theorem of Miyaoka (see~\cite{Miyaoka}, \cite[Ch.~7]{brunella})  implies that the foliation \(\mathcal F\) is a pencil by rational curves.

\subsubsection{More on foliated connections} \label{sec:otherfol}

For a given foliation~$\mathcal{F}$, for the problem of establishing the existence of  foliated connections on~$T_\mathcal{F}$, investigating the existence of foliated connections on other line bundles might prove rewarding, since the set of isomorphism classes of line bundles admitting foliated connections forms a group and is closed under the operations of taking powers and extracting roots:  foliated connections on other line bundles might propagate up to~$T_\mathcal{F}$.    An interesting problem is thus that of determining, for a singular holomorphic foliation on a complex manifold, which are the holomorphic line bundles having foliated connections. 
	
A fundamental example of a foliated connection is the Bott connection on the conormal sheaf \(\mathcal N^* _{\mathcal F} \)  of a foliation \(\mathcal F\)~\cite{bott}. If we denote by~\(n\) the dimension of~\(S\),   \(\mathcal N^* _ {\mathcal F}\) is the sheaf of holomorphic forms of degree \(n-1\) on \(M\) whose contraction with any vector field  tangent to \(\mathcal F\) vanishes. The conormal bundle is not always locally free, but its maximal exterior power is, and the Bott connection is then the usual derivative operator 
\[ d : \wedge ^{n-1} \mathcal N ^* _{\mathcal F}  \rightarrow \mathcal O (\wedge ^n TM)  \simeq \mathcal O (K_{\mathcal F} ) \otimes \wedge ^{n-1} \mathcal N^* _{\mathcal F},  \]
the last isomorphism being given by the adjunction formula. 

On a closed K\"ahler manifold, every holomorphic line bundle with trivial first Chern class carries a flat unitary connection, which, by restriction, induces a foliated connection. Hence, in this setting, the problem consists in determining which are the Chern classes of line bundles which carry foliated connections. This set is a subgroup of the N\'eron-Severi group which contains all the torsion points.  As we have seen, it contains the first Chern class of the normal sheaf of the foliation, but, in general, it seems difficult to say more. There are, however, situations where this point of view permits to prove the existence of foliated affine structures. Let us give some examples:

\begin{lemma}  
On a compact K\"ahler manifold with vanishing first Chern class, any singular holomorphic foliation carries a foliated affine structure. 
\end{lemma}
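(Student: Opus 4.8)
The plan is to exhibit a foliated connection on $T_\mathcal{F}$ by producing one on a line bundle naturally associated to the foliation and then transporting it to $T_\mathcal{F}$. The starting point is the Bott connection, which gives a canonical foliated connection on $\wedge^{n-1}\mathcal{N}^*_\mathcal{F}$; via the adjunction isomorphism $K_M + \wedge^{n-1}\mathcal{N}_\mathcal{F} = K_\mathcal{F}$, this means that $K_\mathcal{F} - K_M$ (equivalently the bundle $\wedge^{n-1}\mathcal{N}_\mathcal{F}$) carries a foliated connection. So it suffices to equip $K_M$ with a foliated connection, for then, since the set of line bundles admitting foliated connections is a group, $K_\mathcal{F} = (K_\mathcal{F} - K_M) + K_M$ does too, and a foliated connection on $K_\mathcal{F}$ dualizes to one on $T_\mathcal{F}$, i.e.\ to a foliated affine structure by the equivalence established in Section~\ref{ss: foliated affine structure}.

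First I would invoke the hypothesis $c_1(M) = 0$: on a compact K\"ahler manifold with vanishing first Chern class, $K_M$ is a holomorphic line bundle with trivial first Chern class, hence, as recalled in the paragraph preceding the statement, it carries a flat unitary connection (coming from the fact that $\mathrm{Pic}^0$ is represented by flat unitary line bundles, or, more concretely, from Yau's theorem giving a Ricci-flat K\"ahler metric whose associated connection on $K_M$ is flat). Restricting this flat connection to the leaves of $\mathcal{F}$ — that is, composing the covariant derivative with the inclusion $T_\mathcal{F} \hookrightarrow TM$ — yields a foliated connection on $K_M$. Combining this with the Bott connection on $\wedge^{n-1}\mathcal{N}^*_\mathcal{F}$ (equivalently, the tensor/adjunction identification above) produces a foliated connection on $K_\mathcal{F}$, and dually on $T_\mathcal{F}$; this is exactly a foliated affine structure on $\mathcal{F}$.

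Alternatively, and perhaps more cleanly, I would phrase the whole argument at the level of Chern classes in the N\'eron--Severi group: the subgroup of $\mathrm{NS}(M)$ consisting of classes of line bundles admitting foliated connections contains $c_1(\mathcal{N}_\mathcal{F})$ (Bott) and, when $c_1(M)=0$, contains $c_1(K_M)=-c_1(M)=0$ trivially; more to the point it contains all classes with vanishing first Chern class, so it contains $c_1(K_\mathcal{F}) = c_1(K_M) + c_1(\wedge^{n-1}\mathcal{N}_\mathcal{F}) = c_1(\wedge^{n-1}\mathcal{N}_\mathcal{F})$, and since this subgroup is closed under the relevant operations the class of $T_\mathcal{F}$ lies in it. One then still needs the genuine (not merely numerical) statement that a line bundle whose class lies in this subgroup admits a foliated connection, which again rests on the K\"ahler $c_1=0$ input for the $\mathrm{Pic}^0$ part.

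The main obstacle, and the only point requiring care, is the passage from numerical equivalence to the actual existence of a holomorphic foliated connection: having $c_1 = 0$ for a line bundle on a compact K\"ahler manifold is what lets one pick an honest flat connection, and one must make sure the restriction-to-$\mathcal{F}$ operation genuinely lands in $\mathcal{O}(K_\mathcal{F}) \otimes \mathcal{S}$ and satisfies the Leibniz rule~\eqref{eq: Leibniz} — this is immediate but should be stated. A secondary subtlety is that $\mathcal{N}^*_\mathcal{F}$ need not be locally free, so one works throughout with its top exterior power $\wedge^{n-1}\mathcal{N}^*_\mathcal{F}$, which is, exactly as in the discussion of the Bott connection above; no difficulty arises there as long as one is consistent. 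Everything else is formal bookkeeping with the group structure on line bundles admitting foliated connections.
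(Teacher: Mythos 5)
Your proposal is correct and follows essentially the same route as the paper's own proof: a flat unitary connection on $K_M$ (from $c_1(M)=0$), tensored via adjunction with the (dual of the) Bott connection on $\wedge^{n-1}\mathcal{N}^*_\mathcal{F}$, yields a foliated connection on $K_\mathcal{F}$ and hence, by duality, a foliated affine structure. The extra care you take about restricting the flat connection along $T_\mathcal{F}\hookrightarrow TM$ and about working with the locally free top exterior power of the conormal sheaf is consistent with the paper's setup and adds nothing that conflicts with it.
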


\begin{proof}
Since the manifold has vanishing first Chern class, its canonical bundle has a unitary flat connection.  By the adjunction formula, the tensor product of this connection with the Bott connection produces a flat connection on the cotangent bundle of the foliation, and hence, by duality,  a foliated affine structure.\end{proof}

\begin{corollary} \label{cor:tor-k2} Any foliation on a Calabi-Yau manifold has a foliated affine structure. \end{corollary}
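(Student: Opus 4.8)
The plan is to obtain this as an immediate consequence of the previous lemma, the only point demanding a word of justification being that, whichever convention one adopts for ``Calabi--Yau'', the hypotheses of that lemma are met. So the first step is to record that a Calabi--Yau manifold is in particular a compact K\"ahler manifold with vanishing first Chern class: if one takes the definition to be that the canonical bundle $K_M$ is holomorphically trivial this is immediate, while if one only assumes $c_1(M)=0$ in real cohomology, or the existence of a Ricci-flat K\"ahler metric, it follows from Yau's theorem together with the fact that a holomorphic line bundle with torsion first Chern class on a compact K\"ahler manifold carries a flat unitary connection.

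The second step is then simply to invoke the preceding lemma: the vanishing of $c_1(M)$ provides a flat connection on $K_M$, and tensoring it with the connection induced by the Bott connection on $\wedge^{n-1}\mathcal N^*_{\mathcal F}$, through the adjunction isomorphism $K_{\mathcal F}=K_M\otimes\wedge^{n-1}\mathcal N_{\mathcal F}$, yields a foliated connection on $K_{\mathcal F}$, hence on $T_{\mathcal F}$ by duality, hence a foliated affine structure on $\mathcal F$. When one takes $K_M\cong\mathcal O_M$ as the definition, the argument can be made even more concrete: a nowhere-vanishing holomorphic $n$-form $\Omega$ trivializes $K_M$ and equips it with the trivial foliated connection, and the same tensor--adjunction step then exhibits the foliated connection on $K_{\mathcal F}$ explicitly in terms of $\Omega$ and of a local vector field generating $\mathcal F$.

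There is, frankly, no real obstacle here: the mathematical content sits entirely in the previous lemma, and the only \emph{hard part} is the bookkeeping of reconciling the competing definitions of a Calabi--Yau manifold (and, if one wants to be scrupulous, of checking that the flat unitary connection on a numerically trivial line bundle is available in the generality required). I would accordingly present the corollary with a one-line proof citing the lemma, perhaps appending the $\Omega$-description as a parenthetical remark for readers who prefer an explicit formula.
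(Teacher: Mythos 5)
Your proposal is correct and coincides with the paper's own (implicit) argument: the corollary is stated there with no separate proof, being an immediate consequence of the preceding lemma once one notes that a Calabi--Yau manifold is a compact K\"ahler manifold with vanishing first Chern class. Your second paragraph essentially re-derives the lemma's proof (flat connection on $K_M$ tensored with the Bott connection via adjunction), which is accurate but redundant given that the lemma is already available to cite.
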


\begin{lemma}   
If the \(\mathbf{Q}\)-rank of the N\'eron-Severi group of a compact K\"ahler manifold \(M\)  is one, then 
\begin{itemize}
\item if the first Chern class of the normal sheaf to the foliation is not a torsion element in the N\'eron-Severi group, there is a foliated affine structure; 
\item otherwise, $\mathcal{F}$ has a transverse invariant pluriharmonic form.
\end{itemize}	
\end{lemma}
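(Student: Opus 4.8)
The plan is to exploit the rank-one hypothesis to reduce everything to a single numerical quantity, the ``degree'' of a line bundle relative to a fixed ample generator. First I would fix a Kähler class and note that, since the $\mathbf{Q}$-rank of the Néron–Severi group is one, there is a line bundle $H$ with positive first Chern class such that, for every holomorphic line bundle $L$ on $M$, some positive power $L^{\otimes k}$ is numerically equivalent to a (possibly negative) integral power $H^{\otimes m}$; equivalently, $c_1(L)$ is a rational multiple of $c_1(H)$ in $\mathrm{NS}(M)\otimes\mathbf{Q}$. Applying this to $L=\mathcal{N}_{\mathcal F}$ (or rather to $\wedge^{n-1}\mathcal{N}_{\mathcal F}$, whichever is the relevant line bundle, so that $K_{\mathcal F}=K_M\otimes(\wedge^{n-1}\mathcal{N}_{\mathcal F})$ by adjunction), I would distinguish two cases according to the sign of this rational multiple.

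In the first case, $c_1(\mathcal{N}_{\mathcal F})$ is not torsion, so it is a nonzero rational multiple $\lambda\, c_1(H)$. The key step is then to produce a foliated connection on $\mathcal{N}^*_{\mathcal F}$ — this is exactly the Bott connection recalled in Section~\ref{sec:otherfol}, which lives naturally on $\wedge^{n-1}\mathcal{N}^*_{\mathcal F}$. Tensoring the Bott connection with a suitable flat unitary connection: concretely, since $K_{\mathcal F}=K_M+\wedge^{n-1}\mathcal{N}_{\mathcal F}$ and $K_M$ carries a flat unitary connection precisely when $c_1(K_M)=0$, which will fail in general, I instead argue as in the previous two lemmas but need the additional input that any line bundle numerically trivial on a compact Kähler manifold admits a flat unitary connection. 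The rank-one hypothesis guarantees that the ``defect'' between $K_{\mathcal F}$ and the bundle on which Bott's connection lives is numerically trivial once we allow ourselves to also use a foliated connection coming from $H$ itself; more precisely I would show that under the rank-one assumption the subgroup of $\mathrm{NS}(M)$ of classes admitting foliated connections, which by Section~\ref{sec:otherfol} is a subgroup containing all torsion and containing $c_1(\mathcal{N}_{\mathcal F})$, must be all of $\mathrm{NS}(M)$ as soon as it contains one non-torsion element — because a rank-one group modulo torsion is cyclic, and extracting roots is permitted. Hence $c_1(K_{\mathcal F})$ lies in this subgroup, $K_{\mathcal F}$ admits a foliated connection, and by duality $\mathcal{F}$ carries a foliated affine structure.

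In the second case, $c_1(\mathcal{N}_{\mathcal F})$ is torsion. Then I would use the structure theory of foliations with (numerically) trivial normal bundle: the conormal bundle has a flat unitary connection, and the Bott connection on $\mathcal{N}^*_{\mathcal F}$ combined with this flatness forces the existence of a transverse invariant form. Concretely, a flat unitary connection on the numerically trivial bundle $\wedge^{n-1}\mathcal{N}_{\mathcal F}$ together with the Bott connection endows this bundle with two connections differing by a holomorphic, hence (by compactness and the Kähler assumption) harmonic, $K_{\mathcal F}$-valued form; unwinding this identification on $\wedge^{n-1}\mathcal{N}^*_{\mathcal F}$ produces a global transverse invariant pluriharmonic $(n-1)$-form defining $\mathcal{F}$.

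The main obstacle I expect is the bookkeeping in the first case: correctly identifying which line bundle (the normal sheaf $\mathcal{N}_{\mathcal F}$, its top exterior power $\wedge^{n-1}\mathcal{N}_{\mathcal F}$, or $K_{\mathcal F}$ itself) carries the natural foliated connection, and checking that the ``group of classes admitting foliated connections'' argument genuinely closes up under the root-extraction needed to pass from a multiple of $c_1(H)$ back to $c_1(K_{\mathcal F})$ — this requires knowing that if $L^{\otimes k}$ admits a foliated connection then so does $L$, which follows from the group structure noted in Section~\ref{sec:otherfol} but must be invoked with care about torsion ambiguities. The dichotomy itself (torsion versus non-torsion) is forced by rank one, so once the right bundle is pinned down the argument should be short.
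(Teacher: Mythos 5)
Your first case is essentially the paper's argument and is sound: the set of N\'eron--Severi classes of line bundles carrying foliated connections is a subgroup containing all torsion classes and closed under taking powers and extracting roots (a foliated connection on $L^{\otimes k}$ gives one on $L$ by dividing the Christoffel symbols by $k$, and two bundles with the same class differ by a numerically trivial one, which on a compact K\"ahler manifold carries a flat unitary, hence foliated, connection). Since this subgroup contains the non-torsion class of $\wedge^{n-1}\mathcal{N}_{\mathcal F}$ via the Bott connection, and the $\mathbf{Q}$-rank is one, it is all of $\mathrm{NS}(M)$; in particular $T_{\mathcal F}$ carries a foliated connection, i.e.\ a foliated affine structure. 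The bookkeeping worry you raise (normal sheaf versus its top exterior power) is harmless: the Bott connection lives on the line bundle $\wedge^{n-1}\mathcal{N}^*_{\mathcal F}$, and passing to the dual or to any other class in the subgroup is covered by the group structure.

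The second case, however, contains a genuine gap. The object you propose to extract --- the difference between the Bott connection and a flat unitary connection on $\wedge^{n-1}\mathcal{N}^*_{\mathcal F}$ --- is a holomorphic section of $K_{\mathcal F}$ (the difference of two foliated connections on a line bundle), not a transverse form of any kind; there is no way to ``unwind'' it into a transverse invariant pluriharmonic form, and nothing guarantees it is even nonzero. The correct mechanism, which is the paper's, does not compare connections at all: since $c_1(\wedge^{n-1}\mathcal{N}^*_{\mathcal F})$ is torsion, this line bundle carries a flat \emph{unitary} connection, whose local flat sections $\omega$ are holomorphic $(n-1)$-forms annihilating $T_{\mathcal F}$, well defined up to multiplication by unimodular constants. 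Unitarity makes the real $(n-1,n-1)$-form $\omega\wedge\overline{\omega}$ globally well defined; it is pluriharmonic, vanishes on the foliation, and is closed because $M$ is K\"ahler, which is exactly the transverse invariant pluriharmonic form asserted. You should replace your connection-difference argument by this flat-section construction; the Bott connection plays no role in this half of the proof.
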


\begin{proof} Notice that the maximal wedge power of the normal sheaf is locally free, namely a line bundle, equipped with the Bott foliated connection; so if its first Chern class  is not a torsion element in the N\'eron-Severi group, then any line bundle over \(S\) has a foliated connection.   In particular, the tangent bundle carries a foliated affine structure.  	If not, then the highest non trivial wedge power \( \wedge ^{n-1} \mathcal{N}^*_{\mathcal F}\)  of the normal sheaf carries a unitary flat connection over \(S\). Given a flat section \( \omega\),   naturally considered as a holomorphic form of degree \( n-1\), the product \( \omega \wedge \overline{\omega}\) is a well-defined pluriharmonic form on \( S\) which vanishes on the foliation \( \mathcal F\). Such a form is closed because \(S\) is K\"ahler, and hence  defines a family of transverse pluriharmonic forms. 
\end{proof}

\begin{exam}[Hypersurfaces of \({\mathbf P}^3\)] \label{ex:hyp-p3} Well-known examples of surfaces having Picard number one are generic hypersurfaces of \({\mathbf P}^3\) of degree at least four, by a theorem of Noether, see \cite{Deligne}. These  are simply connected by the hyperplane section theorem of Lefschetz, and in particular it is impossible in this case for the normal bundle to a foliation to have a torsion first Chern class. Indeed,  if it were the case, the normal bundle would be holomorphically trivial, and so would be its dual, and consequently we would have a holomorphic form on the surface vanishing on the foliation. However, such a form does not exist since the surface has a vanishing first Betti number. In other words, we have proved that \emph{on a generic surface in~\(\mathbf{P}^3\), every singular holomorphic foliation carries a foliated affine structure.} Notice that this property holds on the explicit examples produced in \cite{Shioda}, namely the surfaces defined in homogeneous coordinates by \( w^m+ xy^{m-1}+y z^{m-1}+zx^{m-1}=0\) for \(m\geq 5\) a prime number.  
\end{exam}

\subsection{Foliated projective structures} \label{ss: foliated projective structure}

A projective structure on a curve is an atlas for its complex structure taking values in~$\mathbf{P}^1$ whose changes of coordinates lie within the group of projective transformations~$\{z\mapsto (az+b)/(cz+d)\}$.
In this case,  the \emph{Schwarzian derivative}
\begin{equation}\label{eq:def-shw}\{f(x),x\}=\frac{f'''}{f'}-\frac{3}{2}\left(\frac{f''}{f'}\right)^2=\left(\frac{f''}{f'}\right)'-\frac{1}{2}\left(\frac{f''}{f'}\right)^2,\end{equation}
plays a role analogous to the one played by the affine distortion in the context of affine structures. 

Given two projective structures on a curve~$C$ with charts~$\{(U_i,\phi_i)\}$ and $\{(V_j,\psi_j)\}$, the quadratic form on~$U_i\cap V_j$ given by
\begin{equation}\label{projoperator}
\{f(z),z\}dz^2,
\end{equation}	
for~$f=\psi_j\circ \phi_i^{-1}$, gives a globally well-defined  quadratic form on~$C$, which vanishes if and only if the projective structures coincide. This is due to the fact that the operator \eqref{projoperator} satisfies
\[ \{f\circ g,z\}dz^2=\{g,z\}dz^2+g^*(\{f,w\}dw^2).\]
Reciprocally, given a projective structure with charts~$\{(U_i,\phi_i)\}$ and a  quadratic form~$\beta$ on~$C$, if~$\beta$ reads~$\beta_i(z)dz$ in~$U_i$, the charts locally given by the solutions of the Schwarzian differential equation~$\{f,z\}=\beta_i$ give a globally well-defined projective structure on~$C$. This is,
on a curve, \emph{the projective structures form an affine space directed by the vector space of holomorphic quadratic differentials}.

Projective structures are much more flexible than affine ones:  they exist on \emph{any} curve, and their moduli is an affine space of dimension \(3g-3\). Projective structures  associated to particular geometries (spherical for genus zero, Euclidean in the case of genus one, and hyperbolic for genus at least two) are given by the Uniformization Theorem~\cite{henri-paul}. Nevertheless, the existence of unrestricted projective structures can be very easily established independently from it, as Poincar\'e was well aware of; see~\cite[\S 9]{gunning} for a modern presentation.

\begin{defi}\label{def:proj} Let~$M$ be a complex manifold, $\mathcal{F}$ a singular holomorphic  foliation by curves on~$M$. A \emph{holomorphic foliated projective   structure} on~$M$ over~$\mathcal{F}$ is an open cover~$\{U_i\}$ of~$M\setminus \mathrm{Sing}(\mathcal{F})$ and submersions~$\phi_i:U_i\to \mathbf{P}^1$   transverse to~$\mathcal{F}$ such  that, in restriction to a leaf~$L$ of~$\mathcal{F}$, $(\phi_i|_L)\circ(\phi_j|_L)^{-1}$ belongs to~$\mathrm{PSL}(2,\mathbf{C})$.
\end{defi}

Foliated projective structures may also be defined in terms of foliated projective connections: a \emph{foliated projective connection} is a map~$\Xi:T_\mathcal{F}\to \mathcal{O}(M)$  that to a vector field~$Z$ associates a holomorphic function~$\Xi(Z)$, its~\emph{Christoffel symbol}, satisfying the modified Leibniz rule 
\begin{equation}\label{christproj}
\Xi(fZ)=f^2\Xi(Z)+fZ^2(f)-\frac{1}{2}(Zf)^2.
\end{equation}
For instance, if~$\nabla:T_\mathcal{F}\to \mathcal{O}(M)$ is a foliated connection on~$T_\mathcal{F}$, the associated projective connection~$\Xi$ is
\begin{equation}\label{for:affaproj} \Xi(Z)=-\frac{1}{2}(\nabla(Z))^2+Z(\nabla(Z)).\end{equation}

Let us see that a foliated  projective structure is equivalent to  a foliated projective connection.  Let~$\mathcal{F}$ be a foliation endowed with a   foliated projective structure~$\rho_0$. Let~$Z$ be a  vector field tangent to~$\mathcal{F}$ with singular set of codimension at least two, and consider the projective structure~$\rho_Z$ that it defines  away from $\mathrm{Sing}(\mathcal{F})$. The difference~$\rho_Z-\rho_0$ is a section~$\alpha$ of $K_\mathcal{F}^2$. Define~$\Xi(Z)$ as~$\alpha(Z^{\otimes 2})$.  It is a holomorphic function in the regular part of~$\mathcal{F}$ and, by Hartog's theorem,  extends to all of~$M$. Let us prove that it satisfies condition~(\ref{christproj}). As before, it is sufficient to do so locally in a curve. Consider a curve  endowed with a projective structure~$\rho_0$, $Z$ a holomorphic vector field and $z$ a local coordinate in which  $Z=\indel{z}$. Let~$\alpha(z)dz^2$ be the quadratic form~$\rho_Z-\rho_0$. The projective structure defined by~$fZ$ has~$\int^zd\xi/f(\xi)$ as a chart and thus 
\[ \rho_Z-\rho_{fZ} =   \left(\frac{1}{2}\left(\frac{f'}{f}\right)^2-\frac{f''}{f}\right)dz^2.\]
Hence, the contraction of~$\rho_{fZ}-\rho_0$ with~$(fZ)^{\otimes 2}$  gives~$f^2\alpha(Z^{\otimes 2})+Z^2f-\frac{1}{2}(Zf)^2$, establishing~(\ref{christproj}).   Reciprocally, if~$\Xi$ is a foliated projective connection, $p\notin\mathrm{Sing}(\mathcal{F})$ and~$Z$ is a holomorphic vector field tangent to~$\mathcal{F}$ that does not vanish at~$p$, and such that~$\Xi(Z)\equiv 0$, if~$\phi$ is a function defined in a neighborhood of~$p$ such that~$d\phi(Z)\equiv 1$, $\phi$ defines a foliated projective structure in the sense of Definition~\ref{def:proj} that depends only on~$\Xi$.

When restricted to curves, our definition of projective connection is equivalent to some  of the existing ones (\cite[Def.~1.3.1]{Tyurin}, \cite[Section~4]{gunning-special}).

\begin{exam}[Suspensions] \label{ex:susp} A non-singular foliation by curves~$\mathcal{F}$ on a compact surface~$S$ is a \emph{suspension} if there exists a fibration onto a curve~$\pi:S\to C$ which is everywhere transverse to~$\mathcal{F}$.  \emph{On a suspension, every foliated  projective structure is the pull-back of a projective structure on the base}. In fact, if~$\Xi_\mathcal{F}$ is a foliated projective connection and~$Z$ is a non-vanishing vector field defined in an  open subset~$U$ of~$C$, if~$\pi_\mathcal{F}^*Z$ denotes the pull-back of~$Z$ tangent to~$\mathcal{F}$ then since~$\Xi_\mathcal{F}(\pi_\mathcal{F}^*Z)$ is a  holomorphic function, it is constant along the fibers of~$\pi$. In this way, the projective structure on~$C$ given by  $\Xi_C(Z):=\Xi_\mathcal{F}(\pi_\mathcal{F}^*Z)$ is well-defined; the foliated one is its pull-back. 
\end{exam}

\begin{exam}[Turbulent foliations admit foliated projective structures] \label{ex:turb}  Let~$S$ be a compact surface, $\pi:S\to C$ an elliptic fibration without singular fibers,  $\mathcal{F}$  a turbulent foliation on~$S$ adapted to~$\pi$, this is, every fiber of~$\pi$ is either tangent or everywhere transverse to~$\mathcal{F}$ (see~\cite[Ch.~4]{brunella}). Let~$C_0\subset C$ be the  subset above which~$\pi$ and~$\mathcal{F}$ are transverse. By the arguments in Example~\ref{ex:susp}, the  projective structures on~$C_0$ and the foliated  ones on~$\pi^{-1}(C_0)$  are in correspondence. Let~$p\in C\setminus C_0$. Let us show that if the projective structure on~$C_0$ has a moderately degenerate behavior at~$p$, the corresponding foliated projective structure  extends to a neighborhood of the fiber above~$p$.  The fibration around~$\pi^{-1}(p)$ is given by~$\mathbf{D}\times E\mapsto \mathbf{D}$ for some elliptic curve~$E$. For some local  coordinates $z$ and~$w$ in~$\mathbf{D}$ and~$E$ ($z$ centered at~$p$),  $\mathcal{F}$ is given by the vector field~$Z=\indel{z}+z^{-n}B \indel{w}$, with~$B$  a holomorphic   non-vanishing function and~$n>0$~\cite[Ch.~4, Section~3]{brunella}. Let~$\Xi_0$ be a projective connection on~$\mathbf{D}\setminus \{0\}$, and let~$\Xi$ be the corresponding foliated projective connection on~$\pi^{-1}(\mathbf{D}\setminus \{0\})$. In the spirit of Lemma~\ref{l: extension lemma}, by formula~(\ref{christproj}), for the holomorphic and non-vanishing vector field~$z^nZ$, since~$\pi_*Z=\indel{z}$,
\begin{equation}\label{ext-conn} \Xi(z^n Z)=z^{2n-2}\left(z^2\Xi_0\left(\del{z}\right)+\frac{1}{2}n(n-2)\right).\end{equation}
If this expression is holomorphic (for instance, if~$\Xi_0(\indel{z})$ has at most a   pole of order two at~$0$) the foliated projective structure extends to the non-transverse fiber. (If we had chosen an affine connection~$\nabla_0$ instead of the projective one~$\Xi_0$, (\ref{ext-conn}) would read~$\nabla(z^nZ)=z^{n-1}(z\nabla_0(\indel{z})+n)$.)
\end{exam}

Not all foliations support foliated projective structures. As we mentioned in the introduction, by the work of Zhao \cite{zhao}, no Kodaira fibration admits one (see also Corollary~\ref{zerosign}  for an alternative proof). Despite the generality of this result, we thought  it worthwhile to include a concrete, hands-on, self-contained instance of it:

\begin{exam}[An explicit Kodaira fibration without a foliated projective structure]\label{thm:kodfib} Recall that a \emph{Kodaira fibration} is a  smooth  holomorphic fibration $S\rightarrow B$ from a complex surface over a curve which is not a holomorphic fiber bundle (Kodaira gave the first examples of such fibrations~\cite{kodaira}, see also \cite[p.~220]{bphv}).  Through a construction  close  to Atiyah's one~\cite{atiyah}, we  here construct an explicit  Kodaira fibration with fibers of genus six which does not support a foliated projective structure, i.e. such that there is no family of projective structures on the fibers varying holomorphically.   
	
Start with a curve $C$ of genus two, and let $\pi : C' \rightarrow C$ be a connected non-ramified double covering ($C'$ has genus three). For every $x\in C$, we construct~$64$ curves of genus~$6$: the ramified double coverings $C'' \rightarrow C'$ ramified over $\pi^{-1}(x)$. Such a covering is determined by a morphism from $H_1(C'\setminus \pi^{-1}(x), \mathbf Z)$ to $\mathbf Z / 2\mathbf Z$ that maps the peripheral cycles around each one of the two punctures to $1$; the number of such coverings is~$64$.  Construct the Kodaira fibration $ F : S\rightarrow B$ by putting all these surfaces over the point $x\in C$, and taking a connected component---we don't know in  general if the resulting surface is connected, a case that would lead to a genus~$65$  base~$B$.  Hence, a point~$y$ in~$B$ is the data of a point $x\in C$ and of a double covering~$\delta_y : F^{-1}(y) \rightarrow C'$ ramified over $\pi^{-1} (x)$. Let~$d$ be the degree of the covering~$B\to C$.

We claim that this fibration does not carry a foliated projective structure. Assume by contradiction that there exists a family of projective structures $\{\sigma_y\} _{y\in B} $ on the fibers of $F$ that vary holomorphically with $y$. Introduce a family of branched projective structures~$\{ \beta_y \}_{y\in B}$ on the fibers $F^{-1}(y)$, $\beta_y$ being the pull-back of a (non-branched) projective structure $\nu$  on $C'$ by $\delta_y$. The Schwarzian derivative of $\beta_y$ in the charts given by $\sigma_y$ gives a family of meromorphic quadratic differentials on $F^{-1}(y)$ that vary holomorphically with the $y$ parameter, and which have poles of order $2$ located at the points $\delta_y^{-1} (\pi^{-1} (x) ) $, with residue~$-3/2$ (as quadratic differentials). Indeed, if $u, v$ are charts of $\sigma$ and $\beta$ at such a point, we have $v = c_2 u^2 + \ldots$ where $c_2\neq 0$, hence~$\{v,u\}=-\frac{3}{2}u^{-2}+\ldots$. 
We denote by $Q_y$ the quadratic differential on $F^{-1} (y)$. 
	
For each $y$ in~$B$, we denote by $i_y$ the involution on $F^{-1}(y)$ that exchanges the points in the fiber of $\delta_y$, and we define $R_y =  Q_y + i_y^* Q_y$. This is an $i_y$-invariant meromorphic quadratic differential on $F^{-1} (y)$ having poles at $\delta_y^{-1} ( \pi^{-1} (x))$ of order~$2$ and residues $-3$. Hence, there is a meromorphic quadratic differential~$S_y$ on $C'$  such that $R_y = \delta_y^* S_y$. This differential has poles on the set $\pi^{-1} (x)$, and is holomorphic elsewhere. We claim that the poles on $\pi^{-1} (x)$ are of order $2$, and that the residues are $-3/4$. To see this, take  coordinates $v,w$ in $F^{-1}(y)$ and in $C'$ respectively such that $\delta_y $ is the map $v\mapsto w= v^2$. The quadratic differential $R_y$ is expressed in the $v$-coordinates by $R_y =  ( -3v^{-2} + c_0 + c_2 v^2 + \ldots ) dv^2$, since it is invariant by the involution $v\mapsto -v$. Hence,   $S_y =  \frac{1}{4}( -3w^{-2} + c_0w^{-1} + c_2 w + \ldots  ) dw^2$, proving the claim. 
	
We now define, for $x\in C$, the meromorphic quadratic differential $T_x$ on $C'$ by~$T_x = \sum S_y$ for all the coverings $y\in B$ corresponding to the point $x$. We see that $T_x$ has poles only at $\pi^{-1} (x)$, that these are of order~$2$ and that the residues are~$-3d/4$.  
Let $j$ the involution on $C'$ which exchanges the fibers of $\pi$, and let $U_x$ be the meromorphic quadratic differential on $C$ which satisfies $\pi ^* U_x = T_x + j^* T_x$.  The family $\{U_x\}_{x\in C}$ is a holomorphic family of meromorphic quadratic differentials on $C$ having a unique pole on $C$ at $x$ of order $2$ and of residue $-3d/2$. 
	
We claim that such a family of meromorphic quadratic differentials cannot exist.  Indeed, choose a point $x_0$ in $C$ which is not fixed by any non-trivial involution  and such that there exists a holomorphic quadratic differential~$\mu$ on~$C$ that does not vanish at~$x_0$.  Consider the holomorphic function~$f:C\setminus\{x_0\}\to \mathbf{C}$ given at~$x$ by the evaluation of~$U_x/\mu$ at~$x_0$.  It extends meromorphically to $x_0$, having there a pole of order $2$, because for a local coordinate~$z$ centered at~$x_0$, $U_x/\mu = -\frac{3}{2}(z-x)^{-2} +\ldots$. Hence, $f$ extends to  a ramified double covering from $C$ to $\mathbf P^1$, and the involution exchanging its fibers fixes $x_0$. But this is a contradiction.\end{exam}

The existence of a foliated projective structure is equivalent to the vanishing of a class \(\beta_{\mathcal F}\) in \(H^1 (M, K_{\mathcal F}^2)\). Its    definition mimics the definition of the class \(\alpha_{\mathcal F}\) introduced in the context of foliated affine structures.  Namely,  take a covering of \(M\) by open sets \(U_i\) on which we have   foliated connections~$\Xi_i$, and consider the cocycle \( \beta =(\beta_{ij}) _{ij} \), where \(\beta_{ij}=\Xi_i-\Xi_j\) is a section of \(K_{\mathcal F}^2\) over~$U_i\cap U_j$. Its cohomology class~$\beta_\mathcal{F}\in H^1 (M,K_{\mathcal F}^2)$ is well-defined. To construct a globally defined foliated projective connection, one needs to modify each \(\Xi_i\) in \(U_i\) by adding some section \(\beta_i\) of \(K_{\mathcal F}^2\), in such a way that the resulting connections on the \(U_i\)'s coincide in the intersection of their domains. This is equivalent to solving the equation \( \beta_i - \beta_j = \beta_{ij}\), so \emph{there exists a foliated projective structure if and only if \(\beta_{\mathcal F}=0\)}.  
 
If \(M\) is a curve  and \(\mathcal F\) is the foliation whose only leaf is \(M\)  then by Serre duality \(h^1 (M, K_M^2) =h^0 (M, TM)\), and we recover the fact that  every compact curve of higher genus has a projective structure. Notice, however, that this argument does not allow to conclude that rational and elliptic curves have such structures.

This discussion also shows that, if non empty, \emph{the moduli space of foliated projective structures on a given singular holomorphic foliation~$\mathcal{F}$ is an affine space directed by the space \(H^0(K_{\mathcal F}^2)\)}.

Despite Example~\ref{thm:kodfib}, it is quite common for a singular holomorphic foliation to carry a foliated projective structure. The following criteria is a consequence of Kodaira's vanishing theorem: 

\begin{lemma}  Let~$M$ be a compact  manifold of dimension~$n>1$, $\mathcal{F}$ a foliation by curves on~$M$ such that   \( K_{\mathcal F}^2\otimes K_M^*  \) is ample. Then, there exists a foliated projective structure on \( \mathcal F\).
\end{lemma}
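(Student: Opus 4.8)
The plan is to mimic, almost verbatim, the proof of Lemma~\ref{l: criterion for affine foliated structures}, replacing the obstruction class $\alpha_\mathcal{F}\in H^1(M,K_\mathcal{F})$ by the projective obstruction class $\beta_\mathcal{F}\in H^1(M,K_\mathcal{F}^2)$. Recall from the discussion preceding the statement that a foliated projective structure on $\mathcal{F}$ exists if and only if $\beta_\mathcal{F}=0$; in particular, it suffices to show that $H^1(M,K_\mathcal{F}^2)=0$ under the stated hypothesis.

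First I would observe that since $K_\mathcal{F}^2\otimes K_M^*$ is ample, $M$ carries an ample line bundle and is therefore projective, so Kodaira's vanishing theorem is available. Write $D$ for an ample divisor with $\mathcal{O}_M(D)=K_\mathcal{F}^2\otimes K_M^*$, equivalently $K_M+D=K_\mathcal{F}^2$. Kodaira vanishing gives $H^q(M,K_M+D)=0$ for all $q>0$; taking $q=1$ yields $H^1(M,K_\mathcal{F}^2)=0$. Hence the obstruction class $\beta_\mathcal{F}$ automatically vanishes, and $\mathcal{F}$ admits a foliated projective structure. This is the entire argument; it is the exact analogue of the first half of the proof of Lemma~\ref{l: criterion for affine foliated structures}, where the adjunction identity $K_M+\wedge^{n-1}\mathcal{N}_\mathcal{F}=K_\mathcal{F}$ is here replaced by the tautological identity $K_M+(K_\mathcal{F}^2\otimes K_M^*)=K_\mathcal{F}^2$.

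I do not expect any genuine obstacle: the only points to be careful about are (i) invoking projectivity of $M$ from the existence of an ample line bundle (standard), and (ii) being explicit that the relevant cohomology group controlling the existence of the structure is $H^1(M,K_\mathcal{F}^2)$ — which is precisely the content of the paragraph defining $\beta_\mathcal{F}$ just above the statement. One could optionally remark, in parallel with the affine case, that a Serre-duality reformulation is possible: $H^1(M,K_\mathcal{F}^2)\cong H^{n-1}(M,T_\mathcal{F}^2\otimes K_M)^*$ by Serre duality, but for this lemma the direct Kodaira argument is cleanest and no duality is needed. Thus the proof reduces to one line once the setup is in place.
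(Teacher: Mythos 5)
Your proposal is correct and is essentially identical to the paper's own proof: both apply Kodaira vanishing to the ample divisor $D$ with $K_M+D=K_{\mathcal F}^2$ to conclude $H^1(M,K_{\mathcal F}^2)=0$, hence $\beta_{\mathcal F}=0$. The paper states this in one line; your version merely makes the adjunction-style bookkeeping explicit.
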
  

\begin{proof} Under the assumption,  $M$ is projective, and by Kodaira's vanishing theorem, \(H^1 ( K_{\mathcal F}^2) =0\), so~\(\beta_{\mathcal F}\) vanishes and the claim follows. \end{proof} 

\begin{rmk} This reasoning also shows that  if \(T_{\mathcal F}^{2}\) is ample,  we have a foliated projective structure: by Kodaira's  vanishing theorem,  	$h^{n-1}(T^2_\mathcal{F}\otimes K_M)=0$, and the latter, by Serre duality, equals~$h^1(K_M\otimes (T^2_\mathcal{F}\otimes K_M)^*)=h^1( K^2_\mathcal{F} )$. However, in this situation, we already know that there exists a foliated affine structure, so this is of little help.\end{rmk}

Let us illustrate the use of this lemma.  

\begin{prop} \label{p: foliations on products} Any singular holomorphic foliation on the product of a curve with the projective line carries a foliated projective structure. \end{prop}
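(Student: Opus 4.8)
The plan is to use the cohomological obstruction: a foliated projective structure exists if and only if the class $\beta_\mathcal{F} \in H^1(M, K_\mathcal{F}^2)$ vanishes, so it suffices to show this group is zero for $M = C \times \mathbf{P}^1$ with $C$ a curve. First I would recall that any singular holomorphic foliation by curves on $M$ has its tangent bundle $T_\mathcal{F}$ fitting into the morphism $T_\mathcal{F} \to TM$, and that on the product $C \times \mathbf{P}^1$ the Picard group is $\mathrm{Pic}(C) \times \mathbf{Z}$, so every line bundle is of the form $\pi_C^* L \otimes \pi_{\mathbf{P}^1}^* \mathcal{O}(k)$ for some $L \in \mathrm{Pic}(C)$ and $k \in \mathbf{Z}$; in particular $K_\mathcal{F}^2$ is such a bundle, say $K_\mathcal{F}^2 = \pi_C^* A \otimes \pi_{\mathbf{P}^1}^* \mathcal{O}(k)$.

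The main step is to compute $H^1(C \times \mathbf{P}^1, \pi_C^* A \otimes \pi_{\mathbf{P}^1}^* \mathcal{O}(k))$ via the Künneth formula (or the Leray spectral sequence for $\pi_{\mathbf{P}^1}$): it decomposes as $H^1(C, A) \otimes H^0(\mathbf{P}^1, \mathcal{O}(k)) \oplus H^0(C, A) \otimes H^1(\mathbf{P}^1, \mathcal{O}(k))$. So I need to bound the integer $k$, i.e. the degree of $K_\mathcal{F}^2$ on a fiber $\{pt\}\times\mathbf{P}^1$, equivalently $2\deg(K_\mathcal{F}|_{\mathbf{P}^1})$. The restriction $T_\mathcal{F}|_{\mathbf{P}^1}$ injects into $TM|_{\mathbf{P}^1} = \mathcal{O} \oplus \mathcal{O}(2)$ (trivial summand from $C$, and $T\mathbf{P}^1 = \mathcal{O}(2)$), so $\deg(T_\mathcal{F}|_{\mathbf{P}^1}) \le 2$, hence $\deg(K_\mathcal{F}|_{\mathbf{P}^1}) \ge -2$ and $k = 2\deg(K_\mathcal{F}|_{\mathbf{P}^1}) \ge -4$. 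Unfortunately this alone gives $k \ge -4$, and for $k \le -2$ one has $H^1(\mathbf{P}^1, \mathcal{O}(k)) \ne 0$, so a naive argument does not immediately close. The resolution I would pursue: argue more carefully that if $\mathcal{F}$ is not the fibration by the $\mathbf{P}^1$-fibers then a generic fiber $\{pt\}\times\mathbf{P}^1$ is transverse to $\mathcal{F}$, whence $T_\mathcal{F}|_{\mathbf{P}^1}$ lands in the $\mathcal{O}$-summand (it projects isomorphically to $\mathcal{O}$ off the singular locus, so has degree $\le 0$), giving $\deg(K_\mathcal{F}|_{\mathbf{P}^1}) \ge 0$ and $k \ge 0$; then $H^1(\mathbf{P}^1,\mathcal{O}(k)) = 0$ and the second Künneth summand vanishes, while the first summand $H^1(C,A)\otimes H^0(\mathbf{P}^1,\mathcal{O}(k))$ need not vanish — so instead of proving $H^1(M,K_\mathcal{F}^2)=0$ outright, I would exhibit $\beta_\mathcal{F}$ explicitly as a pullback. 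Concretely, when the fibers are transverse, $\mathcal{F}$ is (after removing the singular set) transverse to a $\mathbf{P}^1$-fibration, so it is obtained by a construction analogous to the turbulent/suspension examples; one can transport a projective structure fiberwise. And in the remaining case $\mathcal{F}$ is the fibration by $\mathbf{P}^1$'s, where a foliated projective structure is immediate: each leaf is $\mathbf{P}^1$ with its standard structure, varying holomorphically.

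Thus the cleaner route, which I would actually write up, is to split into two cases. Case 1: $\mathcal{F}$ is the fibration $\pi_C : C\times\mathbf{P}^1 \to C$ by projective lines; then the standard $\mathrm{PSL}(2,\mathbf{C})$-structure on each $\mathbf{P}^1$-fiber is manifestly a foliated projective structure. Case 2: $\mathcal{F}$ is not this fibration; then I claim $K_\mathcal{F}^2 \otimes K_M^*$ is ample, so the previous Lemma applies directly. For this I need $K_M = \pi_C^* K_C \otimes \pi_{\mathbf{P}^1}^*\mathcal{O}(-2)$, hence $K_\mathcal{F}^2 \otimes K_M^* = \pi_C^*(A \otimes K_C^*) \otimes \pi_{\mathbf{P}^1}^*\mathcal{O}(k+2)$, and ampleness on a product requires both factors ample, i.e. $k + 2 > 0$ and $\deg(A\otimes K_C^*) > 0$. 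The fiber-transversality argument above secures $k \ge 0$, so $k+2>0$ holds. The remaining inequality $\deg A = \deg(K_\mathcal{F}^2|_{\{pt\}\times C}) > \deg K_C = 2g-2$ concerns the degree of $K_\mathcal{F}$ restricted to a horizontal section $C\times\{pt\}$; I expect the main obstacle to be precisely this — controlling $\deg(K_\mathcal{F}|_{C\times\{pt\}})$ — since a horizontal section is typically not $\mathcal{F}$-invariant and the bound must come from how $\mathcal{F}$ meets it. I would handle it by the same transversality philosophy (a generic horizontal section is transverse to $\mathcal{F}$ when $\mathcal{F}$ is not that horizontal fibration, forcing $\deg(T_\mathcal{F}|_C) \le 0$, i.e. $\deg(K_\mathcal{F}|_C)\ge 0$), but if $\mathcal{F}$ happens to be the horizontal fibration $C\times\mathbf{P}^1\to\mathbf{P}^1$ one falls back on Case-1-type reasoning (a fibration over a curve always carries a foliated projective structure pulled back from any projective structure on the base, cf. the suspension example), so every case is covered.
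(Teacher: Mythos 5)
Your overall strategy coincides with the paper's: dispose of the two coordinate fibrations directly, and otherwise prove that \(K_\mathcal{F}^2\otimes K_M^*\) is ample so that the preceding Kodaira-vanishing lemma applies. Your bound in the fiber direction is correct and is essentially the paper's: generic transversality of \(\mathcal{F}\) to the vertical curves \(\{*\}\times\mathbf{P}^1\) gives a nonzero map \(T_\mathcal{F}|_{\{*\}\times\mathbf{P}^1}\to\mathcal{O}\), whence \(k=2\deg(K_\mathcal{F}|_{\{*\}\times\mathbf{P}^1})\geq 0\) and \(k+2>0\).

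The genuine gap is in the horizontal direction, exactly where you anticipated trouble. Writing \(n_h=\deg(K_\mathcal{F}|_{C\times\{*\}})\) for a generic horizontal curve, you need \(\deg(A\otimes K_C^*)=2n_h-(2g-2)>0\), i.e.\ \(n_h>g-1\). Your transversality argument projects \(T_\mathcal{F}|_{C\times\{*\}}\) onto the trivial summand \(\mathrm{pr}_2^*(T\mathbf{P}^1)|_{C\times\{*\}}\simeq\mathcal{O}_C\) and therefore only yields \(n_h\geq 0\); this is vacuous against the target as soon as \(g\geq 2\) (nothing you say excludes, for instance, \(n_h=1\) over a base of genus \(3\)), and it also fails to give strict positivity at \(g=1\). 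The missing idea --- and the crux of the paper's proof --- is to use the \emph{other} component of the anchor map \(T_\mathcal{F}\to TM\): since \(\mathcal{F}\) is not the vertical fibration, the composition \(T_\mathcal{F}\to\mathrm{pr}_1^*(TC)\) is a not-identically-zero section of \(K_\mathcal{F}\otimes\mathrm{pr}_1^*(TC)\), and its restriction to a generic horizontal curve is a nonzero map \(T_\mathcal{F}|_{C\times\{*\}}\to TC\), forcing \(\deg(T_\mathcal{F}|_{C\times\{*\}})\leq 2-2g\), i.e.\ \(n_h\geq 2g-2\). Combined with \(n_h\geq 0\) this gives \(2n_h-(2g-2)=n_h+\bigl(n_h-(2g-2)\bigr)>0\) whenever \(g\neq 1\), and reduces the genus-one boundary case to \(n_h=0\), which must be folded back into your transverse-fibration (suspension) discussion. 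Without this stronger inequality the ampleness claim, and hence your Case 2, is unproven.
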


\begin{proof} Let~\(C\) be a curve of genus \(g\),  $S=C\times \mathbf{P}^1$, and let \(\mathcal F\) be a foliation on~$S$. Curves of the form~$\{*\}\times \mathbf{P}^1$ will be called \emph{vertical}; those of the form~$C\times\{*\}$,  \emph{horizontal}. Let~$V\in H^2(M,\mathbf{Z})$ be the Poincar\'e dual of a vertical curve, $H\in H^2(M,\mathbf{Z})$ that of a horizontal one. These generate~$H^2(M,\mathbf{Z})$.  If~$\mathcal{F}$ is either the vertical or horizontal foliation, the proposition follows from the existence of projective structures on curves, so we will suppose that we are in neither case. Let us denote by \( n_h\) (resp. \( n_v\)) the number of tangencies of~$\mathcal{F}$ with a generic horizontal (resp. vertical) curve. We call \(n_h\) the horizontal degree and \(n_v\) the vertical one.  
	
We have that \( c_1(K_S ) = (2g-2) V - 2 H\) and that \( c_1(N_{\mathcal F} )= a H+b V \) for some~$a,b\in\mathbf{Z}$.  On a horizontal curve that is not invariant by~$\mathcal{F}$, a meromorphic section of \( N_{\mathcal F}^* \) induces a meromorphic one-form having \( n_h-(aH+bV) \cdot H \) zeros, so \(n_h-b= 2g-2\). The same reasoning shows that  \( n_v-a = -2\). To sum up, \[ c_1 (N_{\mathcal F}) = (n_v+2) H+ (n_h -2g+2) V. \] 
Since~$K_\mathcal{F}=K_M\otimes N_\mathcal{F}$ and~$K_M=\mathrm{pr}_1^*(K_C)\otimes \mathrm{pr}_2^*(K_{\mathbf{P}^1})$,  \( c_1 (K_{\mathcal F}) = n_v H + n_h V\), so
\begin{equation}\label{eq:lem_pos}c_1 (K_{\mathcal F}^2 \otimes K_S^*) = (2n_v+2) H + (2n_h - (2g-2))  V.\end{equation}
We now establish that the horizontal degree~$n_h$ is either zero  or  else \(n_h\geq 2g-2\).  The foliation \(\mathcal F\) is defined by a morphism \(i: T_\mathcal F \rightarrow T S\)  that vanishes on the singular set of~$\mathcal{F}$ (a finite number of points). Since \( TS= \mathrm{pr}_1^* (TC) \oplus \mathrm{pr}_2^* (T{\mathbf P}^1)\), the morphism~$i$ is given by  sections of \( K_{\mathcal F} \otimes\mathrm{pr}_1^* (T_C )\) and of \( K_{\mathcal F}\otimes \mathrm{pr}_2^* (T{\mathbf P}^1)\)  that vanish simultaneously on a finite set. Since the foliation is not the vertical one, the first section does not vanish identically. However, \( K_{\mathcal F} \otimes \mathrm{pr}_1^* (T C )= n_v H + (n_h-2g+2) V\), and such a section can only exist if both \(n_v\geq 0\) and \( n_h-2g+2\geq 0\),  proving the desired inequality. 

From this inequality and from~(\ref{eq:lem_pos}), \( K_{\mathcal F} ^2 \otimes K_S^*\) intersects positively~\( H\) and~\(V\), and hence every algebraic curve in~\(S\). This implies, by Nakai's criterion~\cite[Ch.~IV, Cor.~6.4]{bphv}, that it is ample. Hence, on a given singular holomorphic foliation on~\( C\times {\mathbf P}^1\), there exists a foliated projective structure.\end{proof}

\begin{rmk} It would be interesting to investigate the existence of foliated projective structures on general foliated ruled surfaces  (the work  of Gómez-Mont \cite{gomezmont}  seems a natural starting point).  Most foliations on these seem to have foliated affine structures. For instance, we leave to the reader the following: a more detailed inspection of the proof of Proposition~\ref{p: foliations on products}, together with the use of Lemma~\ref{l: criterion for affine foliated structures}, shows that, apart from  suspensions (vanishing vertical degree)  and eventually  foliations of horizontal degree~\(2g-2\) (the lower bound for the horizontal degree of a non-vertical foliation),  foliations on a product with a rational curve  carry foliated affine structures. We have not been able to decide whether the foliations of horizontal degree \(2g-2\) carry or not such structures.  \end{rmk}

\section{Local normal forms}\label{sec:local}	
At a non-singular point of a foliation, there are no local invariants neither  for affine nor for projective foliated structures. There are indeed local invariants at the singular points, beginning with  those of the foliation itself. The main results of this section, Theorems~\ref{thm:loc:aff} and~\ref{thm:loc:proj},  give local normal forms for generic foliated affine and projective structures on generic foliations. We prove that, in all dimensions, in the neighborhood of a generic  singular point of a foliation, a generic foliated projective structure is induced by an affine one, and that a generic foliated affine structure is given by a linear vector field having a constant Christoffel symbol. In particular, we prove that  the spaces of generic foliated affine and projective structures over a generic germ of singular foliation have both dimension one. We  also introduce the affine and projective \emph{ramification indices}, the main local invariants of foliated affine and projective structures at singular points of foliations, in terms of which the results of the following sections will be stated.

\subsection{Normal forms for foliated affine structures}

\subsubsection{Affine structures with singularities on curves} Let us begin by discussing some special singularities of affine structures on curves. Let~$U\subset\mathbf{C}$ be a neighborhood of~$0$, $U^*=U\setminus\{0\}$ and consider an affine structure on~$U^*$.  Let~$\alpha$ be the one-form in~$U^*$ measuring the difference from an auxiliary affine structure on~$U$ to the original one.  We  say that~$0$ is a \emph{singularity} for the affine structure if~$\alpha$ does not extend holomorphically to~$0$ (if~$\alpha$ extends holomorphically to~$0$, so does the affine structure), and that it is a \emph{Fuchsian} one if~$\alpha$ has a simple pole at~$0$. In this case, the residue of~$\alpha$ at~$0$ does not depend on the choice of the auxiliary affine structure on~$U$. The \emph{(normalized) affine angle} at~$0$ of the affine structure with singularities, $\measuredangle(0)\in\mathbf{C}$, is 
\[\measuredangle(0)=\mathrm{Res}(\alpha,p)+1.\]
The normalized affine angle of a non-singular point is~$1$. The normalized affine angle  of the affine structure with developing map~$z\mapsto z^\theta$ is~$\theta$; that of the one with developing map~$z\mapsto \log(z)$ vanishes. Following~\cite[Def.~4]{guillot-rebelo}, we define the (affine) \emph{ramification index} of a singular  affine structure as the reciprocal of the normalized affine angle.

We have a  classification of germs of singular affine structures with Fuchsian singularities  on curves, which may be attributed to Fuchs. It implies that, generically, the affine angle determines the singular affine structure.

\begin{prop}\label{aff1d} Consider an affine structure on a neighborhood of~$0$ in~$\mathbf{C}$ having a Fuchsian singularity at~$0$ with normalized affine angle~$\theta\in\mathbf{C}$. There exists a coordinate~$z$ around~$0$ where  the affine structure has as  developing map  
	\begin{itemize}
		\item $\log(z)$ if~$\theta=0$;
		\item $z\mapsto z^\theta$ if~$\theta\notin\mathbf{Z}^-$;
		\item either $z\mapsto z^\theta$ or $z\mapsto z^\theta+\log(z)$ if~$\theta\in\mathbf{Z}^-$.
	\end{itemize}
\end{prop}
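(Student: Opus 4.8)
The plan is to convert the Fuchsian affine structure into a linear ODE with a regular singular point, integrate it in closed form to obtain an explicit developing map, and then absorb the remaining holomorphic data into a change of coordinate. Concretely, if $\psi$ denotes a developing map, the one-form $\mathcal L(\psi)=(\psi''/\psi')\,dz$ measures the structure against the standard chart $z$, and the Fuchsian hypothesis says $\psi''/\psi'=a(z)$ with $a(z)=(\theta-1)/z+b(z)$, $b$ holomorphic, since $\mathrm{Res}\,\mathcal L(\psi)=\theta-1$ (compare the example $\psi=z^{\theta}$ in the discussion above). Integrating once gives $\psi'=z^{\theta-1}e^{B(z)}$ with $B'=b$; writing $e^{B(z)}=\sum_{k\ge 0}b_k z^{k}$ with $b_0\ne 0$ and integrating again,
\[
\psi=\sum_{k\ge 0}\frac{b_k}{\theta+k}\,z^{\theta+k}+\mathrm{const},
\]
with the convention that the term for which $\theta+k=0$ — which can occur only when $\theta$ is an integer $\le 0$, and then only for $k=-\theta$ — is replaced by $b_{-\theta}\log z$. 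Throughout, the additive constant and an overall nonzero multiplicative constant may be chosen at will, a developing map being defined only up to post-composition with an affine map.

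I would then split into cases. If $\theta=0$ the logarithmic term is present with nonzero coefficient $b_0$; rescaling and absorbing the constant reduce $\psi$ to $\log z+h(z)$ with $h$ holomorphic and $h(0)=0$, whence $w:=z\,e^{h(z)}$ is a coordinate (with $w'(0)=1$) in which $\psi=\log w$. If $\theta\notin\mathbf Z^{-}$ and $\theta\ne 0$ no logarithm appears and $\psi=z^{\theta}g(z)$ with $g$ holomorphic and $g(0)=b_0/\theta\ne 0$, so that $w:=z\,g(z)^{1/\theta}$ (a fixed branch of the root) is a coordinate in which $\psi=w^{\theta}$. If $\theta=-m\in\mathbf Z^{-}$ one gets $\psi=z^{-m}u(z)+b_m\log z$ with $u$ holomorphic and $u(0)=-b_0/m\ne 0$: when $b_m=0$ the coordinate $w:=z\,u(z)^{-1/m}$ gives $\psi=w^{-m}=w^{\theta}$; when $b_m\ne 0$, after normalizing $b_m$ to $1$ I would look for a coordinate of the form $w=z\,v(z)$ with $v(0)\ne 0$, observing that $w^{-m}+\log w=z^{-m}v^{-m}+\log z+\log v$ agrees with $\psi$ exactly when $v^{-m}+z^{m}\log v=u(z)$.

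The one point that demands genuine care is this last, resonant, case, where one can no longer simply extract a root. I would solve the equation $v^{-m}+z^{m}\log v=u(z)$ by the holomorphic implicit function theorem applied to $G(v,z)=v^{-m}+z^{m}\log v-u(z)$ near $(u(0)^{-1/m},0)$, where $\partial_v G=-m\,v^{-m-1}\ne 0$; the resulting holomorphic germ $v$ yields the coordinate change realizing $\psi=w^{-m}+\log w=w^{\theta}+\log w$. That the logarithm cannot be eliminated when $b_m\ne 0$, so that the two outcomes in the last case are genuinely distinct, is a separate remark: the monodromy of $z\mapsto z^{\theta}+\log z$ around $0$ is the nontrivial translation $\zeta\mapsto\zeta+2\pi i$, whereas that of $z\mapsto z^{\theta}$ is trivial. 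A minor notational point is that $\mathbf Z^{-}$ in the statement must be understood as the strictly negative integers, the value $\theta=0$ being covered by the first case.
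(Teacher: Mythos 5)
Your argument is correct and follows the same two-step skeleton as the paper's proof: first determine the shape of the developing map from the equation $\psi''/\psi'=(\theta-1)/z+b(z)$, then normalize by a change of coordinate tangent to a linear map, handling the resonant case $\theta\in\mathbf{Z}^-$ with logarithm present via the implicit function theorem. The one genuine difference is in the first step: the paper regards $\psi$ as a solution of the second-order Fuchsian equation $zf''-((\theta-1)+zA)f'=0$ and quotes Fuchs's theorem for the form $c\log z+z^\theta h(z)$ of its solutions, whereas you observe that this equation is first order in $\psi'$ and integrate it by quadrature, $\psi'=z^{\theta-1}e^{B}$, recovering the same form by term-wise integration. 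This makes the proof self-contained (no appeal to Frobenius--Fuchs theory) and makes completely transparent when the logarithmic term occurs, namely exactly when the coefficient $b_{-\theta}$ of $z^{-\theta}$ in $e^{B}$ is nonzero; the only point you leave implicit is the (routine) convergence of the term-wise antiderivative $\sum_{\theta+k\neq 0}b_kz^{\theta+k}/(\theta+k)$, which holds because $|\theta+k|$ is bounded below over the relevant $k$. Your final substitution $w=zv(z)$ with $v^{-m}+z^{m}\log v=u$ is the paper's $w=ze^{q(z)}$ with $e^{\theta q}+cz^{-\theta}q=h$ under $v=e^{q}$, so the two proofs coincide there; your closing monodromy remark, showing that the two normal forms in the case $\theta\in\mathbf{Z}^-$ are genuinely inequivalent, is a small but worthwhile addition not made explicit in the paper.
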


\begin{proof} From the affine structure induced by a local coordinate~$z$, the difference with the singular affine structure has the form~$((\theta-1)/z+A(z))dz$ for some holomorphic function~$A$. The developing map  of the affine structure is thus a non-constant solution of~$zf''-((\theta-1)+zA)f'=0$.  The homogenized equation~$zf''-(\theta-1)f'$ has the solutions~$z^0$ and~$z^\theta$ (the original equation has \emph{indices}~$0$ and~$\theta$). According to Fuchs's theorem~\cite[\S 15.3]{ince}, if~$\theta$ is neither zero nor a negative integer, there is a solution of the form~$z^\theta h(z)$ with~$h(z)$ holomorphic and nonzero at~$0$. In this case, in the coordinate~$w=zh^{1/\theta}(z)$, the developing map is~$w^\theta$.  If~$\theta$ is zero or  a strictly negative integer, Fuchs's theorem affirms that there is a solution (in our setting, a developing map) of the form~$c\log(z)+z^\theta h(z)$, for some holomorphic function~$h$ taking the value~$1$ at~$0$  and some constant~$c$ (nonzero if~$\theta=0$). If~$c=0$ we are in a case identical to the previous one. Otherwise, if~$q(z)$ is such that~$q(0)=0$ and $e^{\theta q(z)}+cz^{-\theta} q(z)-h(z)=0$, then in the coordinate~$w=ze^{q(z)}$ the developing map reads~$c\log(w)+w^\theta$ (the existence of such a~$q$ follows from the Implicit Function Theorem). By conveniently scaling~$w$ and normalizing the developing map by post-composition by an affine map, we get the desired result. \end{proof}

\subsubsection{The foliated case} Let~$\mathcal{F}$ be a foliation tangent to a non-degenerate vector field defined on a neighborhood of~$0$ in~$\mathbf{C}^n$ and endowed with a foliated affine structure induced by the connection~$\nabla$. Let~$Z$ be a vector field tangent to~$\mathcal{F}$ and~$\gamma=\nabla(Z)$ its Christoffel symbol, as defined in~Section~\ref{ss: foliated affine structure}. Notice that~$\gamma$ extends holomorphically to~$0$. It follows from~(\ref{conn-fun}) that if~$\lambda_1,\ldots, \lambda_n$ are the eigenvalues of~$Z$ at~$0$, the ratio~$[\lambda_1:\cdots: \lambda_n:\gamma(0)]$ is an invariant of the foliated affine structure.

In dimension one, this invariant may be expressed in terms of the previously defined affine ramification index. Consider a singular affine structure on a neighborhood of~$0$ in~$\mathbf{C}$ given by the connection~$\nabla$. Let~$\gamma=\nabla(\lambda z\indel{z})$. The difference between the affine structure induced by the coordinate~$z$ and the first one is~$(\gamma(z)/\lambda-1)dz/z$  and thus, for the ramification index~$\nu$ of the original affine structure,   
\begin{equation}\label{angle-eigen-affine}\nu=\frac{\lambda}{\gamma(0)}.\end{equation}

In the foliated case  in~$(\mathbf{C}^n,0)$, if the eigenvalues at~$0$ of the vector field are~$\lambda_1$, \ldots, $\lambda_n$ and its Christoffel symbol~$\gamma$ does not vanish at~$0$ we will say that~$\nu_i=\lambda_i/\gamma(0)$ is a \emph{principal ramification index}.  From~(\ref{angle-eigen-affine}), 
\begin{equation}\label{eq:aff_ratio} [\lambda_1:\cdots:\lambda_n:\gamma(0)]=[\nu_1:\cdots:\nu_n:1].\end{equation}
In the generic non-degenerate case there will be~$n$ curves~$C_1,\ldots,C_n$ through~$0$, invariant by~$\mathcal{F}$, pairwise transverse, and tangent to the eigenspaces of the linear part of the vector field, and~$\nu_i$ will be the ramification index of the affine structure on~$C_i$ at~$0$.

Generically, the ratio~(\ref{eq:aff_ratio}) determines the foliated affine structure:

\begin{thm} \label{thm:loc:aff} Let~$\mathcal{F}$ be a foliation on a neighborhood of~$0$ in~$\mathbf{C}^n$, with a singularity at~$0$, tangent to a non-degenerate vector field~$Z$ satisfying Brjuno's condition~($\omega$). For a generic foliated connection~$\nabla$ on~$T_\mathcal{F}$, there exist coordinates where~$\mathcal{F}$ is tangent to a linear vector field~$Z'$  whose Christoffel symbol~$\nabla(Z')$ is constant. \end{thm}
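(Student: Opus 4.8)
The plan is to bring the data $(\mathcal F,\nabla)$ to normal form in three steps: linearize $Z$, then change the generator of $\mathcal F$ so that its Christoffel symbol becomes constant, and finally linearize this new generator while preserving the constancy of the Christoffel symbol. For the first step I would use that $Z$ satisfies Brjuno's condition~$(\omega)$: by Brjuno's theorem there are coordinates $x=(x_1,\dots,x_n)$ in which $Z=\sum_{i=1}^{n}\lambda_i x_i\,\partial_{x_i}$ is linear, the $\lambda_i$ being its nonzero eigenvalues at~$0$. (I will assume $Z$ non-resonant, so that the normal form is genuinely linear.) Write $\gamma=\nabla(Z)$, a germ of holomorphic function at~$0$, and $g_0=\gamma(0)$; by~(\ref{conn-fun}), replacing $Z$ by $hZ$ for a unit $h$ replaces $\gamma$ by $Z(h)+h\gamma$. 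The point of the third step is that $\nabla$ is an operator attached intrinsically to the line bundle $T_{\mathcal F}$, so that a section whose Christoffel symbol is a constant function keeps this property under any holomorphic change of coordinates; hence it will be enough to make the Christoffel symbol constant for \emph{some} generator and then linearize that generator.

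For the second step I would solve the equation $Z(h)+h\gamma=1$ for a unit $h$. Here the genericity of $\nabla$ enters. Since the foliated connections on $T_{\mathcal F}$ near~$0$ form an affine space directed by the germs of sections of $K_{\mathcal F}$, the number $g_0=\nabla(Z)(0)$ ranges over all of $\mathbf{C}$ as $\nabla$ varies, and I would require that $g_0\neq 0$ and that $g_0\notin\{-\langle k,\lambda\rangle:\ k\in\mathbf{Z}_{\ge 0}^{n},\ |k|\ge 1\}$, where $\langle k,\lambda\rangle=k_1\lambda_1+\dots+k_n\lambda_n$. Expanding $h=\sum_{m\ge 0}h_m$ and $\gamma=\sum_{m\ge 0}\gamma_m$ into homogeneous parts (so $\gamma_0=g_0$), the degree-zero part of the equation forces $h_0=g_0^{-1}$, and for $m\ge 1$ one must invert $(Z+g_0)h_m=-\sum_{j=0}^{m-1}h_j\gamma_{m-j}$; as $Z$ is linear, $Z+g_0$ acts on the space of homogeneous polynomials of degree~$m$ diagonally in the monomial basis, with eigenvalues $\langle k,\lambda\rangle+g_0\neq 0$, so $h_m$ is uniquely determined. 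This produces a formal unit $h$ with $\nabla(hZ)\equiv 1$.

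The main obstacle is the convergence of $h$: the recursion involves the small divisors $\langle k,\lambda\rangle+g_0$, and because the linear field $Z$ preserves degrees this is a genuine small-divisor problem, not a formal one. I would control it by requiring, as part of the genericity of $\nabla$, that the $g_0$-shifted divisors satisfy a Brjuno-type lower bound — for instance, for almost every value of $g_0$ one has $|\langle k,\lambda\rangle+g_0|\ge c\,|k|^{-\tau}$ for all $k$ with $|k|\ge 1$, by a Borel--Cantelli estimate on the thin sets of exceptional parameters — after which the classical majorant argument of the Siegel--Brjuno lemma shows that $h$ converges on a neighborhood of~$0$. This is the technical heart of the proof; the arithmetic of $\lambda$, through condition~$(\omega)$, is precisely what is needed for the two linearization steps of the argument.

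For the last step, set $Z_1:=hZ$; this is a holomorphic vector field near~$0$, tangent to $\mathcal F$, with $\nabla(Z_1)\equiv 1$ and with non-degenerate linear part whose eigenvalues are the principal ramification indices $\nu_i=\lambda_i/g_0$. The resonance combinations $\langle k,\nu\rangle-\nu_j$ of $Z_1$ are those of $Z$ divided by $g_0$, so $Z_1$ again satisfies condition~$(\omega)$ (its Brjuno series differs from that of $Z$ only by the convergent quantity $\log|g_0|\sum_{m}2^{-m}$). By Brjuno's theorem there is a biholomorphism $\Psi$ fixing~$0$ with $\Psi_*Z_1=Z':=\sum_{i=1}^{n}\nu_i y_i\,\partial_{y_i}$. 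In the coordinates~$y$ the section $Z_1$ of $T_{\mathcal F}$ is precisely $Z'$, whence $\nabla(Z')=\nabla(Z_1)\equiv 1$ is constant, and $\mathcal F$ is tangent to the linear vector field $Z'$, as desired.
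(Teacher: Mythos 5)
Your proposal is correct and follows essentially the same route as the paper: linearize $Z$ via Brjuno's theorem, solve the transport equation $Zf+f\gamma=\mathrm{const}$ (your $Zh+h\gamma=1$ is the paper's equation~(\ref{affinedo}) up to the normalization $f=\gamma(0)h$) to produce a generator with constant Christoffel symbol, and then linearize that generator, observing that constancy of the symbol is coordinate-independent. The only real difference is that where you open up the power-series recursion and sketch convergence under a Diophantine condition on $g_0$ relative to $\lambda$, the paper simply invokes the Brjuno--P\"oschel statement (Theorem~\ref{thm:bbplus}), whose condition~(\ref{condbrjuno2}) is exactly the (weaker, still generic) arithmetic hypothesis needed; citing it would spare you the Siegel--Brjuno majorant argument, which a naive termwise estimate does not close.
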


We refer the reader to~\cite[Ch.~5]{arn} for details on Brjuno's condition~($\omega$), and only mention that it is satisfied by generic (in a measure-theoretic sense) linear parts.  The genericity of the affine structure  will be made precise further on. The proof of our theorem  will be an application of the following general result.

\begin{thm}[Brjuno, P\"oschel]  \label{thm:bbplus} Let~$Z=\sum_i \lambda_iz_i\indel{z_i}$ be a linear vector field on~$(\mathbf{C}^n,0)$.  Let~$F$ be a holomorphic function defined in the neighborhood of~$(0,0)$ in~$\mathbf{C}\times \mathbf{C}^n$ such that~$F(0,0)=0$ and consider the differential equation~$Zf=F(f,z)$ subject to the  condition~$f(0)=0$.  Let~$\mu=\partial F/\partial f|_{(0,0)}$ and suppose that~$\mu\neq\langle K,\lambda \rangle$ for every~$K\in(\mathbf{Z}_{\geq 0})^n$ with~$|K|\geq 2$. Let 
\[\omega'(m)=\min_{2\leq |K|\leq m} |\langle K,\lambda \rangle-\mu|.  \]
Then, if
\begin{equation}-\label{condbrjuno2}\sum_{\nu\geq 0} 2^{-\nu}\log\omega'(2^{\nu+1})<\infty,\end{equation}
the equation has a holomorphic solution (which is, moreover, unique). \end{thm}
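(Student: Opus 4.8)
The plan is to recast the statement as a convergence question for an explicit power series and then to control the small divisors $d_m:=\langle m,\lambda\rangle-\mu$, $m\in(\mathbf Z_{\geq0})^n$, by Brjuno's combinatorial estimate. \emph{Formal solution.} Writing $f=\sum_{|m|\geq1}f_mz^m$ and expanding $F(w,z)-\mu w=\sum_{j+|\ell|\geq2}c_{j,\ell}\,w^jz^{\ell}+\sum_{|\ell|\geq1}c_{0,\ell}\,z^{\ell}$, comparison of the coefficients of $z^m$ in $Zf=F(f,z)$ gives
\[
d_m f_m \;=\; c_{0,m}\;+\;\sum_{j\geq1}\ \sum_{m^{(1)}+\cdots+m^{(j)}+\ell=m} c_{j,\ell}\,f_{m^{(1)}}\cdots f_{m^{(j)}};
\]
the constraint $j+|\ell|\geq2$ forces $|m^{(i)}|<|m|$ in every term, so the right-hand side involves only previously computed coefficients. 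As $d_m\neq0$ for all $|m|\geq1$ — by hypothesis when $|m|\geq2$, the finitely many values $\lambda_i-\mu$ being nonzero as well — the $f_m$ are uniquely determined; this already yields the uniqueness of a holomorphic solution, so only convergence is at stake.

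\emph{Reduction to a majorant problem.} Choose $C,\rho>0$ with $|c_{j,\ell}|\leq C\rho^{-(j+|\ell|)}$. Iterating the recursion expresses $f_m$ as a sum $\sum_TW(T)$ over finite rooted trees $T$: each node $v$ carries a factor $c_{j_v,\ell_v}$ and, when $j_v\geq1$, splits into $j_v$ children and carries in addition the divisor factor $d_{m_v}^{-1}$, with $m_v$ equal to $\ell_v$ plus the sum of the multi-indices of the children; the root satisfies $m_{\mathrm{root}}=m$. Hence
\[
|f_m|\ \leq\ \sum_T\Big(\prod_v|d_{m_v}|^{-1}\Big)\Big(\prod_vC\rho^{-(j_v+|\ell_v|)}\Big).
\]
Ignoring the divisor factors, the sum on the right is the coefficient of $z^m$ of the unique holomorphic solution $\hat f$ of the analytic fixed-point equation $\hat f=\hat F(\hat f,z)$, where $\hat F$ is the geometric majorant of $F-\mu w$ and $\partial_{\hat f}\hat F(0,0)=0$; by the implicit function theorem this coefficient is $\leq A^{|m|}$ for some $A>0$. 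Everything thus reduces to bounding $\prod_v|d_{m_v}|^{-1}$ uniformly over the trees contributing to $f_m$.

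\emph{The Brjuno estimate.} This is the core of the argument and the place where the hypothesis \eqref{condbrjuno2} is used. One proves that for every admissible tree with $m_{\mathrm{root}}=m$,
\[
\prod_v|d_{m_v}|^{-1}\ \leq\ K_0^{|m|}\exp\!\Big(c\sum_{\nu\geq0}2^{-\nu}\log\frac{1}{\omega'(2^{\nu+1})}\Big),
\]
with $K_0$ and $c$ depending only on $\lambda$ and $\mu$. The proof is the classical scale-by-scale counting of resonant nodes: one groups the nodes according to the dyadic block $2^\nu\leq|m_v|<2^{\nu+1}$; along any path in the tree, two nodes whose divisors are both smaller than the threshold $\omega'(2^{\nu+1})$ must satisfy $|m_v-m_{v'}|\geq2^\nu$, for otherwise $|d_{m_v}-d_{m_{v'}}|=|\langle m_v-m_{v'},\lambda\rangle|$ would be bounded below by a constant depending only on $\lambda$, contradicting the smallness of both divisors; therefore at scale $\nu$ there are at most $O(|m|\,2^{-\nu})$ resonant nodes, each contributing at most $1/\omega'(2^{\nu+1})$, while all non-resonant divisors and all divisors with $|m_v|=1$ are absorbed into $K_0^{|m|}$. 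Summing the logarithms over $\nu$ yields the displayed bound, and \eqref{condbrjuno2} ensures that the exponent is finite, say $\leq c'$; hence $\prod_v|d_{m_v}|^{-1}\leq(K_0e^{c'})^{|m|}$.

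Combining the two bounds gives $|f_m|\leq(AK_0e^{c'})^{|m|}$, so $f=\sum f_mz^m$ converges on a polydisk about the origin and is the unique holomorphic solution. The main obstacle is the third step: unfolding the recursion over trees with the correct bookkeeping of which node carries which divisor, and then executing the small-divisor counting — especially the separation property that $|\langle m_v-m_{v'},\lambda\rangle|$ is bounded below when $|m_v-m_{v'}|$ stays bounded — which is precisely the multi-index (spatially structured) form of Brjuno's lemma worked out by P\"oschel.
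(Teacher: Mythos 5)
The paper offers no proof of Theorem~\ref{thm:bbplus}: the authors state explicitly that the result ``does not exactly appear in the literature in the above formulation'' and defer to Brjuno's announcement and to P\"oschel's paper, whose proof is to be ``adapted in a straightforward way''. Your strategy --- formal recursion, tree/majorant expansion, dyadic counting of small divisors --- is exactly the route those references take, and your first two steps are sound: the recursion is well-founded because $j+|\ell|\geq 2$ forces $|m^{(i)}|<|m|$, uniqueness follows, and the comparison with the analytic majorant equation correctly reduces everything to a uniform bound on $\prod_v|d_{m_v}|^{-1}$. Two smaller remarks before the main one: every node of the tree carries a divisor factor $d_{m_v}^{-1}$ (every $f_{m_v}$ does, not only the branching ones), and the non-vanishing of $d_K=\lambda_i-\mu$ for $|K|=1$, which you assert ``as well'', is not among the hypotheses --- without it even the formal solution can fail to exist (take $F(w,z)=\mu w+z_1$ with $\mu=\lambda_1$). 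That is arguably an imprecision of the statement itself, but you should not paper over it.

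The genuine gap is in the third step, precisely where you locate ``the main obstacle''. Your separation property rests on the claim that for $K=m_v-m_{v'}\in(\mathbf{Z}_{\geq 0})^n\setminus\{0\}$ with $|K|$ small, the quantity $|\langle K,\lambda\rangle|$ is ``bounded below by a constant depending only on $\lambda$''. This is false: $\langle K,\lambda\rangle$ can vanish identically for some $K\neq 0$ (e.g.\ $\lambda=(1,-1)$, $K=(1,1)$), and for $\lambda=(1,-\alpha)$ with $\alpha$ a Liouville number it has its own small divisors decaying arbitrarily fast --- while $\mu$ can simultaneously be chosen so that $\omega'$ satisfies~\eqref{condbrjuno2}. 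The point is that $d_{m_v}-d_{m_{v'}}=\langle m_v-m_{v'},\lambda\rangle$ is \emph{not} of the form $\langle K,\lambda\rangle-\mu$, so it is not controlled by the function $\omega'$ appearing in the hypothesis. The genuine Brjuno--P\"oschel counting lemma derives the separation $|m_v|-|m_{v'}|\geq 2^{\nu}$ by recognizing the difference of two divisors along a chain as another divisor of the same type, hence bounded below by the same $\omega$-function evaluated at $|m_v|-|m_{v'}|$ plus a bounded shift; making that mechanism work here requires in addition controlling the self-resonances $\langle K,\lambda\rangle-\lambda_j$ of $Z$ (this is Brjuno's condition~($\omega$) on $Z$ itself, which is assumed where the theorem is applied in Theorem~\ref{thm:loc:aff} but is invoked neither in the statement above nor in your argument). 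Until this counting step is carried out correctly --- and it is the entire content of P\"oschel's lemma --- the key inequality $\prod_v|d_{m_v}|^{-1}\leq K_1^{|m|}$ remains unproved, and with it the convergence.
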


In this theorem, the function~$f$ will be a solution of the differential equation if and only if the vector field~$Z\oplus F(\zeta,z)\indel{\zeta}$, defined in a neighborhood of the origin of~$\mathbf{C}^{n}\times\mathbf{C}$, has~$\zeta=f(z)$  as an invariant manifold. The condition~$\mu\neq\langle K,\lambda \rangle$ guarantees the existence of a formal solution, and~(\ref{condbrjuno2}) guarantees its convergence. For~$n=1$, the hypothesis on~$(\omega')$ is a superfluous one, and the result reduces to Briot and Bouquet's theorem~\cite[\S12.6]{ince}. Theorem~\ref{thm:bbplus} does not exactly appear in the literature in the above formulation. Brjuno's  announcement~\cite{brjuno_varieties} gives a similar statement, and we can find in~\cite{poschel} an  analogous result in the context of invariant manifolds for germs of diffeomorphisms; the proof of the latter may be adapted in a straightforward way to give a complete proof of the above theorem. (For the case where the~$\lambda_i$ belong to the Poincar\'e domain,  see  also~\cite{kaplan}, \cite{carrillo};  see  \cite[\S IX]{cha_stras} for an analogous result under Siegel-type Diophantine conditions.)

\begin{proof}[Proof of Theorem~\ref{thm:loc:aff}] Since~$Z$ satisfies Brjuno's condition~($\omega$), it is linearizable, so we may suppose that it is already linear. Suppose that~$f$ is a function such that
\begin{equation}\label{affinedo}Zf=\gamma(0)-f\gamma,\; f(0)=1.\end{equation}
The existence of such a function follows, generically, from Theorem~\ref{thm:bbplus}, which we may apply to equation~(\ref{affinedo}). In terms of the statement of Theorem~\ref{thm:bbplus}, $\mu=-\gamma(0)$; generically, $\mu\neq\sum_i m_i\lambda_i$, and  condition~(\ref{condbrjuno2}) is satisfied. The Christoffel symbol of the vector field~$Z'=fZ$, is, by construction, the constant~$\gamma(0)$. It remains constant in the coordinates where~$Z'$ is linear. \end{proof}
Notice that the condition~$-\gamma(0)\neq\sum_i m_i\lambda_i$ may be expressed solely in terms of the principal affine ramification indices.

\subsection{Normal forms for foliated projective structures}\label{sec:localproj}  
\subsubsection{Projective structures with singularities on curves}  Let~$U\subset\mathbf{C}$ be a neighborhood of~$0$, $U^*=U\setminus\{0\}$, and consider a projective structure on~$U^*$.  Let~$\beta$ be the  quadratic form in~$U^*$ measuring the difference from an auxiliary projective structure on~$U$ to this one. We  say that~$0$ is a \emph{singularity} for the  projective structure if~$\beta$ does not extend holomorphically to~$0$. A singularity of a   projective structure is said to be \emph{Fuchsian} if~$\beta$ has at most a double pole at~$0$. The quadratic residue~$\mathrm{Q}(\beta,0)$ of the quadratic form~$\beta$ at~$0$, $\mathrm{Q}((r/z^2+\ldots)dz^2,0)=r$,  does not depend on the choice of the auxiliary  projective structure. In this case, we define the \emph{(normalized) projective angle} at~$0$  of the  projective structure with singularities as~$\measuredangle(0)=\sqrt{1-2\mathrm{Q}(\beta,0)}$. It is only well-defined up to sign.  The normalized projective angle of the projective structure with developing map~$z\mapsto z^\theta$ is~$\pm \theta$. We define the \emph{projective ramification index} at~$0$  as the reciprocal of the normalized projective angle. Again, it is only well-defined up to sign.

We also have a local classification of projective structures with Fuchsian singularities in dimension one.

\begin{prop}\label{prop:locproj1d} Consider a projective structure on a neighborhood of~$0$ in~$\mathbf{C}$ having a Fuchsian singularity at~$0$ with normalized projective angle~$\theta\in\mathbf{C}$. Then, there exists a singular affine Fuchsian structure in its class. In particular, there exists a coordinate~$z$ around~$0$ where the developing map is given as in Proposition~\ref{aff1d}.
\end{prop}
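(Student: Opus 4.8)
The plan is to reduce Proposition~\ref{prop:locproj1d} to the affine case already settled in Proposition~\ref{aff1d}. A projective structure with a Fuchsian singularity at~$0$ is measured, against the auxiliary projective structure induced by a local coordinate~$z$, by a quadratic form $\beta=\beta(z)\,dz^2$ with at most a double pole, say $\beta=(r/z^2+\cdots)\,dz^2$, where by definition $\theta^2=1-2r$. The developing map is then a ratio of two independent solutions of the second-order linear ODE $f''+\tfrac12\beta(z)f=0$ (the Schwarzian equation $\{f,z\}=\beta$ being equivalent, after the standard substitution, to a second-order linear system). So I first observe that this is a Fuchsian ODE at~$0$ with indicial equation $\rho(\rho-1)+\tfrac12 r=0$, whose roots $\rho_1,\rho_2$ satisfy $\rho_1+\rho_2=1$ and $\rho_1-\rho_2=\pm\theta$ (matching the normalization of the projective angle).

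Next I would produce, inside the given projective class, a singular \emph{affine} Fuchsian structure. The point is that the class of the projective structure is determined by $\beta$ up to adding a holomorphic quadratic form, so it suffices to exhibit one affine structure $\nabla$ whose associated projective connection (via formula~\eqref{for:affaproj}) differs from $\beta$ by something holomorphic. Concretely, if $\nabla(\indel z)=a(z)\,dz$ for some function $a$ with at most a simple pole at~$0$, then the induced quadratic form is $(-\tfrac12 a^2 + a')\,dz^2$, whose polar part at~$0$ is $(-\tfrac12 a_{-1}^2 - a_{-1})z^{-2}\,dz^2$ with $a_{-1}=\mathrm{Res}(a\,dz,0)$. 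Setting this equal to $r z^{-2}$ gives the equation $a_{-1}^2+2a_{-1}+2r=0$, i.e. $(a_{-1}+1)^2=1-2r=\theta^2$, so I choose $a_{-1}=\theta-1$ (consistent with the definition $\measuredangle(0)=\mathrm{Res}(\alpha,p)+1$ for the affine structure). With this residue fixed, one still has the freedom of the holomorphic part of $a$; I would invoke Fuchs's theorem exactly as in the proof of Proposition~\ref{aff1d} to solve for the remaining Taylor coefficients of $a$ so that the full quadratic form $(-\tfrac12 a^2+a')\,dz^2$ equals $\beta$ — or, more economically, just note that any choice of the holomorphic part gives an affine Fuchsian structure whose \emph{projective} class has the right polar part, hence differs from the prescribed one by a holomorphic quadratic differential, which can be absorbed by modifying the holomorphic part of~$a$ (the moduli space of projective structures being an affine space directed by holomorphic quadratic differentials, as recalled in Section~\ref{ss: foliated projective structure}).

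Once a singular affine Fuchsian structure in the class is produced with normalized affine angle $\theta$, Proposition~\ref{aff1d} applies verbatim and yields a coordinate $z$ in which the developing map of the affine structure — hence, post-composing with the inclusion $\mathbf{C}\hookrightarrow\mathbf{P}^1$, of the projective one — is $\log z$ (if $\theta=0$), $z^\theta$ (if $\theta\notin\mathbf{Z}^-$), or $z^\theta$ or $z^\theta+\log z$ (if $\theta\in\mathbf{Z}^-$), giving the claimed normal forms. The step I expect to be the main obstacle is the first one: checking that the affine structure I build really lies in the \emph{same} projective class, i.e. that the two quadratic forms agree and not merely their polar parts. The clean way around this is the affine-space structure on projective structures — fix any affine Fuchsian structure with the correct residue $\theta-1$, compute that its projectivization differs from the target by a holomorphic quadratic differential $q\,dz^2$, and then adjust the holomorphic part of the Christoffel symbol $a(z)$ to kill $q$, which is possible because the map from holomorphic one-forms $h\,dz$ to the induced change of projective quadratic form is $h\mapsto (h'- a h)\,dz^2$, a first-order linear operator that is surjective onto holomorphic quadratic differentials near~$0$. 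I would present this adjustment as the technical core and leave the elementary residue computation to the reader.
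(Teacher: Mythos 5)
Your strategy coincides with the paper's: exhibit a Fuchsian \emph{affine} structure inside the given projective class and then invoke Proposition~\ref{aff1d}; your residue computation $(a_{-1}+1)^2=\theta^2$ is also correct. The gap sits exactly in the step you flag as the technical core. The adjustment map is not $h\mapsto (h'-ah)\,dz^2$ but $h\mapsto (h'-ah-\tfrac{1}{2}h^2)\,dz^2$, and, more importantly, even its linearization $h\mapsto h'-ah$ fails to be surjective onto holomorphic germs precisely when the residue $a_{-1}=\theta-1$ is a strictly positive integer: since $a$ has a simple pole, writing $h=\sum_k h_kz^k$ turns $h'-ah=q$ into the recursion $(k-a_{-1})h_k=q_{k-1}+\cdots$ (with $a_{-1}h_0=0$ forced by holomorphy of the left-hand side), and this is obstructed at $k=a_{-1}$ whenever $a_{-1}\in\mathbf{Z}_{>0}$, i.e.\ whenever $\theta\in\{2,3,\dots\}$. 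The same resonance blocks the nonlinear equation, which after multiplication by $z$ is a Briot--Bouquet equation with linear part $\theta-1$ at its singular point. So, as written, your argument breaks down for integer $\theta\geq 2$, and for general $S$ no choice of the holomorphic part of $a$ with that residue can repair it.

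The missing idea is the one the paper exploits: the normalized projective angle is only defined up to sign, so one must first \emph{choose} the root $\theta$ of $\theta^2$ that is not a strictly positive integer, and only then impose the residue $\theta-1$. Concretely, the paper solves the Riccati equation $g'=S+\tfrac{1}{2}g^2$ for the affine distortion in a single step: setting $u=zg$ gives $zu'=z^2S(z)+u+\tfrac{1}{2}u^2$, whose fixed point $u(0)=\theta-1$ has Briot--Bouquet eigenvalue $1+u(0)=\theta$, and the non-resonance hypothesis holds exactly because of that sign choice. If you make the same choice of sign and replace your linear-surjectivity claim by an application of the Briot--Bouquet theorem (Theorem~\ref{thm:bbplus} with $n=1$, where no Diophantine condition is needed) to the full Riccati equation --- either directly for $g$, as in the paper, or for your perturbation $h$ --- the argument closes, and the final reduction to Proposition~\ref{aff1d} is then correct.
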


\begin{proof} The difference from the projective structure induced by a local coordinate~$z$ to the singular one has the form~$S(z)dz^2$, $S(z)=\frac{1}{2}(1-\theta^2)z^{-2}+\ldots$. From~(\ref{eq:def-shw}), 
the affine structure with invariant~$g(z)dz$ is in the projective class of the original projective structure if~$g$   is a solution to the Riccati equation $g'=S+\frac{1}{2}g^2$ (if there is some~$f$ for which~$f''/f'=g$ and~$\{f,z\}=S$). For~$u=zg$, this equation reads  
\begin{equation}\label{eq:projtoaff}zu'=z^2S(z)+u+\frac{1}{2}u^2.\end{equation}
Let~$\theta$ be a root of~$\theta^2$ that is not a strictly positive integer. By the theorem of Briot and Bouquet~\cite[\S12.6]{ince},   equation~(\ref{eq:projtoaff}) has a holomorphic solution~$u(z)$ with~$u(0)=\theta-1$. The affine structure induced by~$gdz=udz/z$ is thus Fuchsian and induces the original projective structure.  \end{proof}

\subsubsection{The foliated case}\label{sec:norforproj} Let~$\mathcal{F}$ be a foliation tangent to a non-degenerate vector field defined in a neighborhood of~$0$ in~$\mathbf{C}^n$ and endowed with a foliated projective structure induced by the projective connection~$\Xi$. Let~$Z$ be a vector field tangent to~$\mathcal{F}$ and~$\rho=\Xi(Z)$ its Christoffel symbol, as defined in Section~\ref{ss: foliated projective structure}; notice that it extends holomorphically to~$0$. From~(\ref{christproj}), if~$\lambda_1,\ldots,\lambda_n$ are the eigenvalues of~$Z$ at~$0$, in the weighted projective space~$\mathbf{P}(1,\ldots,1,2)$, the  ratio~$[\lambda_1:\cdots:\lambda_n:\rho(0)]$ is an invariant of the foliated projective structure. Let us relate this invariant, in dimension one, to the previously defined projective ramification index. Consider a singular projective structure on~$(\mathbf{C},0)$ and let~$\rho=\Xi(\lambda z\indel{z})$. The difference of the  projective structure with coordinate~$z$ and the singular one  is~$\frac{1}{2}(1+2\rho/\lambda^2)dz^2/z^2$, and thus, for the  projective ramification index~$\nu$, $\nu^2=-\frac{1}{2}\lambda^2/\rho(0)$. In particular, 
\begin{equation}\label{fromeigentoindex} [\lambda_1:\cdots:\lambda_n:-2\rho(0)]=[\nu_1:\cdots:\nu_n:1] \text{ in }   \mathbf{P}(1,\ldots,1,2).\end{equation}	
In this setting, the numbers~$\nu_i$ are said to be the \emph{principal projective ramification  indices} of the foliated projective structure at~$0$.

\begin{rmk}\label{rem:even} The individual principal projective ramification indices are only well-defined up to sign. More generally, only their even functions  are well-defined.
\end{rmk}

\begin{rmk}\label{rem:indaffproj}If a foliated affine structure is considered as a projective one, its affine and projective ramification indices coincide (within the limitations given by the previous remark).
\end{rmk}

The analogue of Theorem~\ref{thm:loc:aff} for foliated projective structures is the following one:

\begin{thm} \label{thm:loc:proj} Let~$\mathcal{F}$ be a foliation on a neighborhood of~$0$ in~$\mathbf{C}^n$, with a singularity at~$0$, generated by a non-degenerate vector field~$Z$ satisfying Brjuno's condition~($\omega$). For a generic foliated projective structure on~$\mathcal{F}$, 
\begin{itemize}
	\item	there exists a foliated affine structure in its  class, and
	\item there exist coordinates where~$\mathcal{F}$ is tangent to a linear vector field  having a constant Christoffel symbol. 
\end{itemize} 
\end{thm}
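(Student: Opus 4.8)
The plan is to follow the same strategy used for Theorem~\ref{thm:loc:aff}, reducing the projective statement to the affine one via a fiberwise version of Proposition~\ref{prop:locproj1d}. First I would linearize~$Z$, which is possible since it satisfies Brjuno's condition~($\omega$); so assume $Z=\sum_i\lambda_i z_i\indel{z_i}$ with $\lambda=(\lambda_1,\dots,\lambda_n)$ non-resonant. Write $\rho=\Xi(Z)$ for the Christoffel symbol of the given foliated projective connection, a holomorphic function with value $\rho(0)$ at the origin, and recall from~(\ref{fromeigentoindex}) that $\rho(0)$ encodes the principal projective ramification indices through $\nu_i^2=-\tfrac12\lambda_i^2/\rho(0)$.

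The first bullet---exhibiting a foliated affine structure in the projective class---amounts to solving, in a neighborhood of $0$ in $\mathbf{C}^n$, the foliated Riccati equation obtained by imitating~(\ref{eq:projtoaff}): looking for a holomorphic function~$u$ with $u(0)=\theta-1$ (for $\theta$ a square root of $\theta^2:=1+2\rho(0)/\dots$, i.e. one of the principal indices) such that the affine connection $\nabla$ defined by $\nabla(Z)=(u+1)\cdot(\text{something})$ maps to~$\Xi$ under formula~(\ref{for:affaproj}). Concretely, substituting $\Xi(Z)=-\tfrac12\gamma^2+Z\gamma$ with $\gamma=\nabla(Z)$ into $\rho=\Xi(Z)$ gives a first-order quasilinear PDE $Z\gamma-\tfrac12\gamma^2=\rho$ for the unknown $\gamma$; setting $\gamma=\gamma(0)\,v$ or working directly, this is of the form $Zv=G(v,z)$ with $G(0,0)=0$ and $\partial G/\partial v|_{(0,0)}=-\gamma(0)$, so Theorem~\ref{thm:bbplus} applies exactly as in the proof of Theorem~\ref{thm:loc:aff}, provided $-\gamma(0)\neq\langle K,\lambda\rangle$ for all multi-indices $K$ with $|K|\geq 2$ and the small-divisor sum~(\ref{condbrjuno2}) converges; both hold generically, and the non-resonance condition can again be phrased purely in terms of the principal projective ramification indices (via $\gamma(0)^2=-2\rho(0)$, up to the sign ambiguity of Remark~\ref{rem:even}). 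This produces a holomorphic $\gamma$, hence a foliated affine connection $\nabla$ with $\nabla(Z)=\gamma$, lying in the class of~$\Xi$ by~(\ref{for:affaproj}).

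For the second bullet I would then apply Theorem~\ref{thm:loc:aff} to the affine structure just constructed: once we know the projective class contains an affine structure $\nabla$ with holomorphic Christoffel symbol $\gamma$, and since $Z$ still satisfies Brjuno~($\omega$), a second (generic) application of Theorem~\ref{thm:bbplus}---solving $Zf=\gamma(0)-f\gamma$ with $f(0)=1$ as in~(\ref{affinedo})---yields $Z'=fZ$ with constant Christoffel symbol $\gamma(0)$, which stays constant in linearizing coordinates for $Z'$. Alternatively one can merge the two steps: seek directly a holomorphic $f$ with $f(0)=1$ such that $\Xi(fZ)$ is the constant $-\tfrac12\gamma(0)^2$; by~(\ref{christproj}) this is $f^2\rho+fZ^2f-\tfrac12(Zf)^2=-\tfrac12\gamma(0)^2$, a second-order equation that can be reduced to first order by introducing $g=Zf/f$ and is again governed by Theorem~\ref{thm:bbplus}.

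\textbf{Main obstacle.} The delicate point is bookkeeping the genericity/non-resonance hypothesis and the sign ambiguity. The individual indices $\nu_i$ (equivalently $\gamma(0)$) are only defined up to a global sign, so one must check that the excluded resonance locus $\{-\gamma(0)=\langle K,\lambda\rangle\}$ and the convergence condition~(\ref{condbrjuno2}) are invariant under $\gamma(0)\mapsto-\gamma(0)$, or else fix a sign once and for all; and one must verify that, after the reduction to the affine case, the genericity assumption needed by Theorem~\ref{thm:loc:aff} is implied by (not merely compatible with) the genericity assumed here. The underlying analytic input---existence and uniqueness of the holomorphic solution to the Briot--Bouquet-type equation under Brjuno's condition---is entirely supplied by Theorem~\ref{thm:bbplus}, so the remaining work is the algebraic translation between $\rho$, $\gamma$, and the ramification indices, which is routine.
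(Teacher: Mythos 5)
Your proposal follows essentially the same route as the paper: linearize $Z$ via Brjuno's condition, solve the Riccati-type equation $Z\gamma=\tfrac12\gamma^2+\rho$ with $\gamma(0)^2+2\rho(0)=0$ by Theorem~\ref{thm:bbplus} to produce an affine structure in the projective class, then invoke Theorem~\ref{thm:loc:aff} for the normal form. The only quibble is a sign slip ($\partial F/\partial\gamma|_{(0,0)}=\gamma(0)$, not $-\gamma(0)$, for this step), which is harmless given the sign ambiguity of $\gamma(0)$ and the genericity framing you already discuss.
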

\begin{proof}  Let~$\mathcal{F}$ be a foliation endowed with a foliated projective structure with connection~$\Xi$. Let~$Z$ be a vector field tangent to~$\mathcal{F}$, and suppose that it is linear. Let~$\rho=\Xi(Z)$ be  its  Christoffel symbol. From  formula~\eqref{christproj}, if~$\gamma$ is a function such that~$Z\gamma=\frac{1}{2}\gamma^2+\rho$, there exists, like in Proposition~\ref{prop:locproj1d}, a foliated  affine structure inducing the given projective one, with connection~$\nabla$, such that~$\nabla(Z)=\gamma$.  We may resort to Theorem~\ref{thm:bbplus} to establish the  existence of a solution to this equation with one of the initial conditions~$\gamma(0)$ such that ~$\gamma^2(0)+2\rho(0)=0$. For the hypothesis of the theorem, $\mu=\gamma(0)$, and according to it, we have  solutions to the equation whenever~$\gamma(0)\neq\sum_i m_i\lambda_i$ and condition~(\ref{condbrjuno2}) is satisfied.  Theorem~\ref{thm:loc:aff}  establishes the second part of our claim.\end{proof}

\section{An index theorem  for foliated affine  structures} \label{sec:affindex}

The existence of an affine structure on a curve imposes topological restrictions on it, and the only compact curves admitting them are elliptic ones. Similarly, the existence of a foliated affine structure imposes topological restrictions on both the foliation and the ambient manifold, and conditions the local behavior of the foliated affine structure at its singular points.

We will make this precise through an index theorem relating the affine  ramification indices defined in the previous section with some topological data depending only on the foliation.  Some index theorems of the like follow directly from the Baum and Bott's one~\cite{baum-bott} since, generically, from~(\ref{eq:aff_ratio}), the ratios of the eigenvalues of a vector field tangent to a foliation  at a singular point (in terms of which the Baum-Bott index theorem is expressed in many situations) are the ratios of the principal affine ramification indices. We are nevertheless interested in results that truly depend on the foliated affine structure and not just on the foliation that supports it: 
\begin{thm}\label{ourdescartes} Let~$M$ be a compact complex manifold of dimension~$n$, $\mathcal{F}$  a holomorphic foliation by curves on~$M$ having only isolated non-degenerate singularities~$p_1,\ldots, p_k$. Consider a foliated  affine structure subordinate to~$\mathcal{F}$ having at each one of the singularities a non-vanishing Christoffel symbol, and let~$\nu_1^{(i)}$, \ldots, $\nu_n^{(i)}$ be the principal affine ramification indices at~$p_i$. Then,
\[ (-1)^n\sum_{i=1}^k\frac{1}{\nu_1^{(i)} \cdots \nu_n^{(i)}}= c_1^n(T_\mathcal{F}). \]
\end{thm}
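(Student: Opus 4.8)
The plan is to realize the left-hand side as a sum of local Baum--Bott-type residues and then invoke the Baum--Bott index theorem, but with a twist: rather than using the Christoffel symbol directly, I would reinterpret the foliated affine structure as producing a meromorphic section of a suitable power of $T_{\mathcal F}$ or, more precisely, as a logarithmic object whose residues at the singularities are governed by the ramification indices. Concretely, near a singularity $p_i$, by Theorem~\ref{thm:loc:aff} (in the generic case) one may choose coordinates where $\mathcal F$ is tangent to a linear vector field $Z = \sum_j \lambda_j z_j \del{z_j}$ with constant Christoffel symbol $\gamma_0 = \gamma(0) \neq 0$, and $\nu_j^{(i)} = \lambda_j/\gamma_0$. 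The parallel (flat) vector field for the connection is then $Z' = z_1^{-\lambda_1/\gamma_0}\cdots z_n^{-\lambda_n/\gamma_0} Z$ up to a unit, i.e. a \emph{multivalued} section of $T_{\mathcal F}$ whose "divisor" is $-\sum_j (\nu_j^{(i)})^{-1}\{z_j=0\}$ — wait, one must be careful: the exponents are $\lambda_j/\gamma_0 = \nu_j^{(i)}$, so the local exponent data at $p_i$ encodes exactly $\nu_1^{(i)},\dots,\nu_n^{(i)}$.

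The cleanest route is to patch these local flat (multivalued) sections of $T_{\mathcal F}$ into a global object. Away from $\mathrm{Sing}(\mathcal F)$, a flat section of a flat line bundle connection is multivalued with unitary-type (here, complex) monodromy; the associated "logarithmic derivative" $d\log$ of the section gives a meromorphic connection on $T_{\mathcal F}$ with logarithmic poles along the separatrix divisors and with residues read off from the $\nu_j^{(i)}$. I would then apply the Baum--Bott formula to $c_1^n(T_{\mathcal F})$: for a foliation with isolated non-degenerate singularities, $c_1^n(T_{\mathcal F}) = \sum_i \mathrm{BB}_{p_i}$ where the local contribution, when computed from the connection form of our specific foliated affine structure, is a residue of a rational $n$-form. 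The key computation is that this local residue equals $(-1)^n/(\nu_1^{(i)}\cdots\nu_n^{(i)})$. This is essentially a Grothendieck-type local residue: for the vector field $Z$ with eigenvalues $\lambda_j$ and the relation $[\lambda_1:\cdots:\lambda_n:\gamma_0] = [\nu_1:\cdots:\nu_n:1]$, the relevant residue $\mathrm{Res}_0 \frac{(\gamma_0\, dz_1\wedge\cdots\wedge dz_n)^{\otimes\,?}}{\lambda_1 z_1 \cdots \lambda_n z_n}$-type expression collapses, after normalizing by the Christoffel symbol, to $\gamma_0^n/(\lambda_1\cdots\lambda_n) = 1/(\nu_1\cdots\nu_n)$, with the sign $(-1)^n$ coming from orientation/ordering conventions in the Baum--Bott residue.

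The main obstacle I anticipate is twofold. First, making rigorous the passage from "flat multivalued section of $T_{\mathcal F}$ with prescribed logarithmic behavior" to a genuine statement about $c_1^n(T_{\mathcal F})$: one needs either to exhibit a global meromorphic section of some tensor power $T_{\mathcal F}^{\otimes m}$ (possible only if the $\nu_j^{(i)}$ are rational with bounded denominators, which is \emph{not} assumed) or, better, to argue cohomologically that the foliated connection $\nabla$ itself — which is globally defined as a holomorphic differential operator $T_{\mathcal F}\to\mathcal O_M$ — computes the Chern class via a Chern--Weil / Bott vanishing argument adapted to the singular locus, where the local "curvature" contributions localize to residues at the $p_i$. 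Second, the sign and normalization: tracking $(-1)^n$ through the Baum--Bott localization and the choice of orientation requires care, and I would pin it down on a simple model, e.g. $\mathcal F$ on $\mathbf P^1\times\cdots\times\mathbf P^1$ or the linear foliation on $\mathbf P^n$ with its standard affine structure (Example~\ref{exam-cpn}), where both sides can be computed independently. The non-rationality issue I would circumvent by working with the connection $\nabla$ directly: its Christoffel data patch into a global section of $K_{\mathcal F}$-valued something only up to the cocycle $\alpha_{\mathcal F}$, but the \emph{pointwise} ratios $[\lambda_j:\gamma_0]$ are the honest local invariants fed into a Baum--Bott-type formula, so the theorem should follow by choosing, in Baum--Bott, the particular connection on $T_{\mathcal F}$ compatible with $\nabla$ near each $p_i$ and computing its residue.
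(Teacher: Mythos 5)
Your local computation is right on target: at a non-degenerate singularity with non-vanishing Christoffel symbol the quantity that must come out of the localization is \( (-\gamma(0))^n/(\lambda_1\cdots\lambda_n) = (-1)^n/(\nu_1\cdots\nu_n)\), and you correctly flag that the real difficulty is global. But the proposal does not actually close that gap, and the route you sketch would not close it. Baum--Bott localizes characteristic classes of the \emph{virtual normal bundle} \(TM-T_{\mathcal F}\), not of \(T_{\mathcal F}\) itself, so invoking it ``for \(c_1^n(T_{\mathcal F})\)'' is not available as stated. The alternative you propose --- a Chern--Weil/Bott-vanishing argument using the foliated connection \(\nabla\) directly --- also does not work off the shelf: \(\nabla\) is only a partial connection (it differentiates sections of \(T_{\mathcal F}\) along the leaves), so it does not by itself produce a curvature representative of \(c_1^n(T_{\mathcal F})\); one needs a genuine localization theorem adapted to this partial-connection situation, and identifying and setting up that theorem is precisely the missing idea. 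The multivalued-flat-section/logarithmic-divisor picture runs into the further problems you yourself note (irrational exponents, no global separatrix divisors), and the appeal to Theorem~\ref{thm:loc:aff} imports genericity and Brjuno hypotheses that Theorem~\ref{ourdescartes} does not assume --- though this last point is inessential, since the eigenvalues and \(\gamma(0)\) are defined without any normal form.

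The paper's resolution, which is absent from your proposal, is to change the ambient space: one builds the \emph{geodesic vector field} \(X\) on the total space of \(T_{\mathcal F}\), locally \(X_j=\zeta_j Z_j-\gamma_j\zeta_j^2\,\partial/\partial\zeta_j\), checks that these local expressions glue by the Leibniz rule, and observes that the induced foliation \(\mathcal G\) on \(T_{\mathcal F}\) leaves the zero section \(M\) invariant with normal bundle canonically equal to \(T_{\mathcal F}\). Lehmann's residue theorem (the higher-dimensional Camacho--Sad) then localizes \(c_1^n(N_M)=c_1^n(T_{\mathcal F})\) to the singularities of \(\mathcal G|_M=\mathcal F\), and the eigenvalue of \(X\) in the fiber direction at such a point is \(-\gamma(0)\), which produces exactly your local residue. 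So the correct global tool is Lehmann's theorem applied to an auxiliary foliation on the total space of \(T_{\mathcal F}\), not Baum--Bott on \(M\); without that construction the argument does not go through.
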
	
Here, $c_1(T_\mathcal{F})\in H^2(M,\mathbf{Z})$ is the first Chern class of~$T_\mathcal{F}$, $c_1^n(T_\mathcal{F})\in H^{2n}(M,\mathbf{Z})$, and, as usual, we have identified cohomology classes of top degree with their evaluation on the fundamental class.  For instance, if~$\mathcal{F}$ is a foliation of degree~$d$ on~$\mathbf{P}^n$, $c_1^n(T_\mathcal{F})=(1-d)^n$. In the case where~$M$ is a curve ($n=1$), there are no singularities ($k=0$) and~$T_\mathcal{F}=TM$: the result reduces to~$c_1(TM)=0$.

A foliated affine  structure along the foliation~$\mathcal{F}$ defines naturally a \emph{geodesic vector field} on~$T_\mathcal{F}$.  Theorem~\ref{ourdescartes} will follow from applying Lehmann's index theorem~\cite{lehmann} to the foliation induced by this vector field relative to the zero section.

\subsection{The geodesic vector field}  

Consider a foliated affine  structure along the foliation~$\mathcal{F}$.
For every~$v\in T_\mathcal{F}$ such that~$\pi(v)$ is not a singular point of~$\mathcal{F}$, there is a  geodesic of the associated connection, $c:(U,0)\to (M,\pi(v))$,  tangent to~$\mathcal{F}$, such that~$c'(0)=v$ (the germ of~$c$ at~$0$ is unique). The derivative gives a lift~$\widetilde{c}:U\to T_\mathcal{F}$ with~$\pi(\widetilde{c}(t))=c(t)$  and~$\widetilde{c}(t)=c'(t)$. The vector field on~$T_\mathcal{F}\setminus\pi^{-1}(\mathrm{Sing}(\mathcal{F}))$ that has this curve as its integral one through~$v$ extends, by Hartog's theorem, to all of~$T_\mathcal{F}$. This is the \emph{geodesic vector field} of the foliated affine structure.

Local expressions may be given as follows.
Let~$\{U_i\}_{i\in I}$ be a cover of~$M$ by open subsets such that, in~$U_j$, $\mathcal{F}$ is given by the vector field~$Z_j$. If~$U_i\cap U_j\neq\emptyset$,  let~$g_{ij}:U_i\cap U_j\to\mathbf{C}^*$ be the function such that~$Z_i=g_{ij}Z_j$. The line bundle~$T_\mathcal{F}$ is obtained by gluing the sets in~$\{U_i\times \mathbf{C}\}_{i\in I}$ by means of the identification
\begin{equation}\label{gluetangent}(u,\zeta_j)=(u,g_{ij}\zeta_j)\end{equation} if~$U_i\cap U_j\neq\emptyset$. 

Let now the foliated affine structure come into play. Let~$\gamma_j:U_j\to\mathbf{C}$ be the Christoffel symbol~$\nabla(Z_j)$. Consider, in~$U_j\times \mathbf{C}$, the vector field 
\begin{equation}\label{loc-vf} X_j=\zeta_j Z_j- \gamma_j\zeta_j^2\del{\zeta_j}.\end{equation}
In~$(U_i\times\mathbf{C})\cap (U_j\times\mathbf{C})$, under~(\ref{gluetangent}), this vector field reads~$g_{ij}\zeta_i Z_j- (g_{ij}\gamma_j+Z_jg_{ij})\zeta_i^2\indel{\zeta_i}$, which, by Leibniz's rule~(\ref{eq: Leibniz}), equals~$X_i$. This shows that~(\ref{loc-vf}) defines a global holomorphic vector field~$X$  on the total space of~$T_\mathcal{F}$. We will establish that this is the geodesic vector field of~$\nabla$.

The vector field~$H$ on~$T_\mathcal{F}$ given by~$\zeta_j\indel{\zeta_j}$ in $U_j\times\mathbf{C}$ is globally well-defined. We have the   relation~$[H,X]=X$. In its integral form, it implies that if~$(z(t),\zeta_j(t))$ is a solution to~$X_j$ then~$(z(at+b),a\zeta_j(at+b))$ is also a solution. Since all the solutions above a given point may be constructed in this way, the vector field~$X$ gives a class of parametrizations of the leaves of~$\mathcal{F}$ that is invariant under precompositions by affine maps. The inverses of these parametrizations  form the atlas of charts of a foliated affine structure.

Let us prove that the foliated affine structure  associated to~$X$ is exactly the one we started with, that the parametrized solutions of~$X$ project onto the geodesics of our original foliated affine structure. If the vector field~$Z_j$ is such that~$\nabla(Z_j)$ vanishes identically then, on the one hand, the geodesics of~$\nabla$ are the integral curves of~$Z_j$ (with their natural parametrization) and its constant multiples; on the other, $X_j$ reduces  to~$\zeta_jZ_j$  ($\zeta_j$ is a first integral), and the integral curves of the latter project also onto the integral curves of the constant multiples of~$Z_j$. We conclude that~$X$ is the geodesic vector field of~$\nabla$.

\begin{rmk} This gives yet another definition of a foliated affine structure: \emph{a vector field~$X$ on~$T_\mathcal{F}$ projecting onto~$\mathcal{F}$ such that~$[H,X]=X$.}  The projections of the solutions of such a vector field induce an affine structure along the leaves of~$\mathcal{F}$. Further, it has local expressions of the form~(\ref{loc-vf}), and the connection~$\nabla:T_\mathcal{F}\to \mathcal{O}(M)$ locally defined by~$\nabla(Z_j)=\gamma_j$ gives a globally well-defined connection.\end{rmk}

\begin{rmk}
The geodesic vector field~$X$ is a quasihomogeneous one, for~$[H,X]=X$.  The singularities of~$\mathcal{F}$ are in correspondence with the fibers of~$T_\mathcal{F}$ along which~$H\wedge X=0$; those  with non-vanishing  Christoffel symbols correspond to the fibers where~$X$ does not vanish identically. Let~$p\in\mathrm{Sing}(\mathcal{F})$ be one of these, and let~$\nu_1,\ldots,\nu_n$ be its principal ramification indices. For the solution of~$X$ contained in the fiber above~$p$, one can define its~\emph{Kowalevsky exponents}, complex  numbers that localize some integrability properties of~$X$. In our case, with the normalizations found in~\cite{goriely}, these are~$-1, \nu_1,\ldots,\nu_n$.\end{rmk}

The vector field~$X$ induces a foliation by curves~$\mathcal{G}$ on~$T_\mathcal{F}$  that  leaves the zero section invariant.  This is exactly the setting of Lehmann's theorem.

\subsection{Lehmann's theorem}

For the proof of Theorem~\ref{ourdescartes}, we will use an index theorem due to Lehmann  which generalizes the Camacho-Sad index theorem to higher dimensions~\cite{lehmann}. Let us recall it in the generality that will suit our needs. We follow the normalizations  and sign conventions found in~\cite{suwa}.

Let~$V$ be a manifold of dimension~$n+1$, $M\subset V$ a codimension one smooth compact submanifold with normal bundle~$N_M$, and~$\mathcal{G}$ a foliation by curves on~$V$ leaving~$M$ invariant. Suppose that the singularities of the foliation induced by~$\mathcal{G}$ on~$M$ are isolated. For such a singularity~$p$, in coordinates~$(z_1,\ldots,z_n,w)$ centered at~$p$, where~$M$ is given by~$w=0$ and~$\mathcal{G}$ is induced by~$X=\sum_{i=1}^n a_i(z,\zeta)\indel{z_i}+w b(z,w) \indel{w}$, 
define 
\[ \mathrm{Res}_\mathcal{G}(c_1^n,M, p)=(-1)^n\left(\frac{i}{2\pi}\right)^n\int_T \frac{b^n(z,0)}{\prod_{i=1}^n a_i(z,0)}dz_1\wedge\cdots\wedge dz_n,\]
with~$T=\{w=0\}\cap(\cap_{i=1}^n\{\|a_i(z,0)\|=\epsilon\})$ for some sufficiently small~$\epsilon$. (This number is well-defined.)  Lehmann's theorem affirms that
\[ \sum_{p\in\mathrm{Sing}(\mathcal{G}|_M)}\mathrm{Res}_\mathcal{G}(c_1^n,M,p)=c_1^n(N_M),\]
where~$c_1(N_M)\in H^2(M,\mathbf{Z})$ denotes the first Chern class of~$N_M$.

If the restriction of~$X$ to~$w=0$ is non-degenerate at~$p$ and the eigenvalues of the linear part of this restriction are $\lambda_1,\ldots,\lambda_n$, we have that~$\mathrm{Res}_\mathcal{G}(c_1^n,M, p)=b^n(0)(\lambda_1\cdots\lambda_n)^{-1}$.

\begin{proof}[Proof of Theorem~\ref{ourdescartes}] 
Let~$\mathcal{G}$ be the foliation on~$T_\mathcal{F}$ induced by the geodesic vector field~$X$.	It leaves the zero section~$M$ invariant. If~$\mathcal{F}$ is generated by~$Z=\sum_i a_i(z)\indel{z_i}$  in a neighborhood of~$p$, $\mathcal{G}$ is, in a neighborhood of~$p$ in~$T_\mathcal{F}$, tangent to the vector field~$\sum_{i=1}^n a_i(z)\indel{z_i}-\zeta\gamma(z)\indel{\zeta}$. If~$\gamma(0)\neq 0$ and $\nu_1$, \ldots, $\nu_n$ are the principal affine ramification indices of the foliated affine structure at~$p$, then,  by~(\ref{angle-eigen-affine}), 
\[\mathrm{Res}_\mathcal{G}(c_1^n,M, p)=\frac{(-\gamma(0))^n}{\lambda_1\cdots\lambda_n}=\frac{(-1)^n}{\nu_1   \cdots  \nu_n }.\]
On the other hand, by construction, $N_M$ is  exactly~$T_\mathcal{F}$. A straightforward application of Lehmann's theorem yields Theorem~\ref{ourdescartes}.\end{proof}

\section{An index theorem for foliated   projective structures} \label{sec:projindex}

Every compact curve admits a projective structure but, as we have seen, not every foliation in a surface admits a foliated one. When foliated projective structures do exist, the  foliation and the ambient manifold impose  conditions on the  behavior of the structure at the singular points of the foliation. The results in this section will cast these in a precise form.
	
The Baum and Bott index theorem~\cite{baum-bott} will be behind the formulation and the proof of our result, and we begin by recalling some of the notions and terms that appear in its statement.  Let~$\varphi(x_1,\ldots,x_k)$ be a symmetric homogeneous polynomial with complex coefficients. Define the polynomial~$\widetilde{\varphi}$ through the equality~$\widetilde{\varphi}(\sigma_1,\ldots,\sigma_k)=\varphi(x_1,\ldots,x_k)$, where~$\sigma_i  = \sum _{j_1 < \ldots <j_i} x_{j_1} \ldots x_{j_i} $ is the $i$-th elementary symmetric polynomial in~$x_1,\ldots,x_k$. For a vector bundle~$V$, let~$c_i(V)$ denote the $i$-th Chern class of~$V$, $c(V)$ its total Chern class, and let $\varphi(c(V))\in H^{2k}(N,\mathbf{Z})$ be given by~$\widetilde{\varphi}(c_1(V),\ldots,c_k(V))$.  This definition extends to the context of virtual vector bundles, which are elements of the \(K\)-theory of \(M\). We refer the reader to \cite{bott-kx} for facts around virtual vector bundles and their Chern classes.

Let~$\varphi(x_1,\ldots,x_{n+1})$ be a symmetric homogeneous polynomial of degree~$n+1$. We will distinguish the variable~$x_{n+1}$.  For~$i=0,\ldots, n+1$, define the symmetric homogeneous polynomial of degree~$i$ in~$n$ variables~$\widehat{\varphi}_i$ through the equality
\begin{equation}\label{symdecomp}\varphi(x_1,\ldots,x_n,x_{n+1})=\sum_{i=0}^{n+1}x_{n+1}^{n+1-i}\widehat{\varphi}_{i}(x_1,\ldots,x_n).\end{equation}
In particular, for the odd part (with respect to~$x_{n+1}$)~$\varphi_\mathrm{odd}$ of~$\varphi$,
\[\varphi_\mathrm{odd}(x_1,\ldots,x_n,x_{n+1})=\sum_{j=0}^{\lfloor n/2\rfloor}x_{n+1}^{2j+1}\widehat{\varphi}_{n-2j}(x_1,\ldots,x_n).\]

\begin{thm}\label{ourindthmproj}  
Let~$M$ be a compact complex manifold of dimension~$n$, $\mathcal{F}$  a holomorphic foliation by curves on~$M$ having only isolated non-degenerate singularities~$p_1,\ldots, p_k$. Consider a holomorphic foliated projective structure subordinate to~$\mathcal{F}$ for which the Christoffel symbols do not vanish at the singularities, and let~$\nu_1^{(i)}$, \ldots, $\nu_n^{(i)}$ be the  principal projective ramification indices at~$p_i$. Let~$\varphi(x_1,\ldots,x_{n+1})$ be a symmetric homogeneous polynomial of degree~$n+1$. Then, with the previous notations,	\begin{equation}\label{eq:tracepown}
	\sum_{i=1}^k    \frac{\varphi_\mathrm{odd}(\nu_1^{(i)},\cdots, \nu_n^{(i)},1)}{\nu_1^{(i)}  \cdots  \nu_n^{(i)}} = \sum_{j=0}^{\lfloor n/2\rfloor}  c_1^{2j}(T_\mathcal{F}) \widehat{\varphi}_{n-2j}(c(TM-T_\mathcal{F})).   \end{equation} 
\end{thm}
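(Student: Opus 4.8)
The plan is to transpose the proof of Theorem~\ref{ourdescartes}: replace the geodesic vector field on the total space of $T_{\mathcal F}$ by a Riccati-type foliation on a projective-line bundle, and Lehmann's theorem by the Baum--Bott index theorem~\cite{baum-bott} (with the conventions of~\cite{suwa}).

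\emph{First} I would attach to the foliated projective structure its ``bundle of projective charts'': a compact complex $(n+1)$-manifold $\mathbf P$ with a $\mathbf P^1$-bundle projection $\pi\colon\mathbf P\to M$, a one-dimensional holomorphic foliation $\mathcal H$ on $\mathbf P$ which is ``Riccati along~$\mathcal F$'' --- over $M\setminus\mathrm{Sing}(\mathcal F)$ it is the flat $\mathbf P^1$-bundle and flat foliation built from the $\mathrm{PSL}(2,\mathbf C)$-cocycle $(\phi_i|_L)\circ(\phi_j|_L)^{-1}$ of Definition~\ref{def:proj}, with leaves the graphs of the developing charts --- and a distinguished ``developing section'' $s\colon M\to\mathbf P$. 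Since $\mathrm{Sing}(\mathcal F)$ has codimension $\geq2$, all of these extend across it, just as the geodesic vector field is extended by Hartogs in the affine case. Locally $\mathbf P=\mathbf P(J^1_{\mathcal F}(K_{\mathcal F}^{-1/2}))$ and $s$ is the oper line subbundle; what matters is that, after twisting, $\mathbf P=\mathbf P(W)$ with $W$ fitting into an extension $0\to\mathcal O_M\to W\to T_{\mathcal F}\to0$ (the oper condition), so that with $\xi=c_1(\mathcal O_{\mathbf P(W)}(1))$ normalized by $s^*\xi=0$ one has $\xi^2=\pi^*c_1(T_{\mathcal F})\,\xi$ and $\pi_*\xi=1$. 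Since $T_{\mathbf P/M}$ restricts to $\mathcal O(2)$ on each fiber, $c_1(T_{\mathbf P/M})=2\xi-\pi^*c_1(T_{\mathcal F})$, whence
\[ c_1^2(T_{\mathbf P/M})=\pi^*c_1^2(T_{\mathcal F}),\qquad \pi_*c_1(T_{\mathbf P/M})=2 .\]
One also checks that $d\pi$ gives an isomorphism $T_{\mathcal H}\cong\pi^*T_{\mathcal F}$ (a nonzero map of line bundles vanishing only on $\mathrm{Sing}(\mathcal H)$, of codimension $n+1\geq2$).

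\emph{Second}, I would locate and linearize $\mathcal H$ at its singularities. Being Riccati, $\mathrm{Sing}(\mathcal H)$ lies in the fibers over $\mathrm{Sing}(\mathcal F)$: over $p_i$, in a trivialization where $\mathcal F$ is generated by a vector field $Z$ with $\rho=\Xi(Z)$, $\rho(p_i)\neq0$, $\mathcal H$ is tangent to $Z+(a+bw+cw^2)\,\indel{w}$ and its singular points are the two fixed points $q_i^{\pm}$ of the projective action of the residue. A computation parallel to the one-dimensional case treated in Proposition~\ref{prop:locproj1d} shows that the $n$ ``horizontal'' eigenvalues of $\mathcal H$ at $q_i^{\pm}$ are the eigenvalues $\lambda_1^{(i)},\dots,\lambda_n^{(i)}$ of $DZ(p_i)$, while the ``vertical'' one is $\pm t_i$ with $t_i^2=-2\rho(p_i)$; by~\eqref{fromeigentoindex} the eigenvalue vector at $q_i^{\pm}$ is therefore $t_i(\nu_1^{(i)},\dots,\nu_n^{(i)},\pm1)$. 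Hence, for $\varphi$ homogeneous of degree $n+1$,
\[ \mathrm{BB}_\varphi(\mathcal H,q_i^{+})+\mathrm{BB}_\varphi(\mathcal H,q_i^{-})=\frac{\varphi(\nu^{(i)},1)-\varphi(\nu^{(i)},-1)}{\nu_1^{(i)}\cdots\nu_n^{(i)}}=\frac{2\,\varphi_{\mathrm{odd}}(\nu^{(i)},1)}{\nu_1^{(i)}\cdots\nu_n^{(i)}}. \]

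\emph{Third}, I would compute the Baum--Bott side $\int_{\mathbf P}\varphi(c(T\mathbf P-T_{\mathcal H}))$. The exact sequences for $T\mathbf P$ together with $T_{\mathcal H}\cong\pi^*T_{\mathcal F}$ give $T\mathbf P-T_{\mathcal H}=T_{\mathbf P/M}+\pi^*(TM-T_{\mathcal F})$ in $K$-theory; using the decomposition~\eqref{symdecomp} with $\eta:=c_1(T_{\mathbf P/M})$ in the distinguished slot, $\varphi(c(T\mathbf P-T_{\mathcal H}))=\sum_i\eta^{\,n+1-i}\,\pi^*\widehat\varphi_i(c(TM-T_{\mathcal F}))$. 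Pushing forward with $\pi_*\eta^{2k}=0$ and $\pi_*\eta^{2k+1}=2\,c_1^{2k}(T_{\mathcal F})$ (immediate consequences of the two displayed identities), only the terms $i=n-2j$ survive and one gets $\int_{\mathbf P}\varphi(c(T\mathbf P-T_{\mathcal H}))=2\sum_{j}c_1^{2j}(T_{\mathcal F})\,\widehat\varphi_{n-2j}(c(TM-T_{\mathcal F}))$. Equating this with the sum over $i$ of the Baum--Bott residues computed in the second step, the common factor $2$ cancels and~\eqref{eq:tracepown} follows. The hard part will be the first step: constructing $\mathbf P$, $\mathcal H$ and $s$ globally, extending them across $\mathrm{Sing}(\mathcal F)$, and pinning down the Chern-class identities for $T_{\mathbf P/M}$ together with $T_{\mathcal H}\cong\pi^*T_{\mathcal F}$; once these are in place, the second and third steps are a local eigenvalue computation and a routine $\mathbf P^1$-bundle push-forward.
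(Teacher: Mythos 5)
Your proposal is correct and follows essentially the same route as the paper: the paper also forms the $\mathbf{P}^1$-bundle $\mathbf{P}(J^1T_\mathcal{F})$ (an extension of $T_\mathcal{F}$ by $\mathcal{O}_M$, constructed globally via the geodesic vector field of the projective connection and extended across $\mathrm{Sing}(\mathcal{F})$ by Hartogs), identifies $T_\mathcal{G}=\pi^*T_\mathcal{F}$, finds the two singularities over each $p_i$ with eigenvalue ratios $[\lambda_1:\cdots:\lambda_n:\pm\sqrt{-2\rho(p_i)}]$ contributing $2\varphi_{\mathrm{odd}}(\nu^{(i)},1)/(\nu_1^{(i)}\cdots\nu_n^{(i)})$, and evaluates the Baum--Bott side by the same Grothendieck relation and fiber integration (the paper uses $\kappa=c_1(\ker D\pi)=2\zeta+\pi^*c_1(T_\mathcal{F})$ with $\kappa^2=\pi^*c_1^2(T_\mathcal{F})$ and $\pi^!\kappa=2$, exactly your two displayed identities in a different normalization). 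The only differences are cosmetic choices of normalization of the relative hyperplane class and the oper-style packaging of the jet bundle.
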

The summands in the left-hand side of~(\ref{eq:tracepown}) are, in agreement with Remark~\ref{rem:even}, well-defined.

\begin{exam}\label{thmexasimplest} If~$n$ is even, for~$\varphi=\sum_{i=1}^{n+1} x_i^{n+1}$, $ \varphi_\mathrm{odd}(x_1,\ldots,x_{n+1})=x_{n+1}^{n+1}$,  and~(\ref{eq:tracepown}) becomes simply formula~(\ref{for:projeven}). 
\end{exam}

\begin{exam}\label{thmexase}	
If~$n$ is odd, $n=2m+1$, for~$\varphi=\sum_{i\neq j}  x_i^{n}x_j$, $ \varphi_\mathrm{odd}=x_{n+1}^n\sum_{i=1}^nx_{i}+x_{n+1}\psi(x_1,\ldots,x_n)$  for~$\psi(x_1,\ldots,x_n)=\sum_{i=1}^nx_{i}^n$,  and~(\ref{eq:tracepown}) becomes 
\begin{multline*}	\sum_{i=1}^k    \frac{\nu_1^{(i)} + \cdots+  \nu_n^{(i)}}{\nu_1^{(i)}  \cdots  \nu_n^{(i)}} +\sum_{i=1}^k    \frac{\left(\nu_1^{(i)}\right)^n + \cdots+  \left(\nu_n^{(i)}\right)^n}{\nu_1^{(i)}  \cdots  \nu_n^{(i)}} \\ =   c_1^{2m}(T_\mathcal{F})c_1(TM-T_\mathcal{F}) + \widetilde{\psi}(TM-T_\mathcal{F}),  \end{multline*}
where~$\widetilde{\psi}$ is  as in the Baum-Bott index theorem. The latter implies that the second summands in each side are equal, and the equality reduces to~(\ref{for:projodd}). \end{exam}

In particular, in even dimensions, with Remark~\ref{rem:indaffproj} taken into account, Theorem~\ref{ourindthmproj} extends Theorem~\ref{ourdescartes} to the projective setting. (In odd dimensions,  Theorem~\ref{ourdescartes} is exclusively affine since, to begin with, from Remark~\ref{rem:even}, its statement does not make sense in the projective case.)

\begin{exam} Let~$\mathcal{F}$ be the foliation on~$\mathbf{P}^2$ given by the pencil of  conics through four points in general position. There are three singular conics, pairs of lines associated to the three ways in which the four points in the base can be taken in pairs; these degenerations correspond  to the ``pinching'' of a   loop.  Beyond the four base points of the pencil, $\mathcal{F}$ has three other singular points, those where  the lines in each pair intersect (all of them are non-degenerate). Fix a foliated projective structure on~$\mathcal{F}$. For a non-singular conic in the pencil, the base points give four Fuchsian singularities of the projective structure (as defined in Section~\ref{sec:localproj}), with ramification indices~$\nu_1$, \ldots, $\nu_4$, which are independent of the conic. In a degenerate conic in the pencil the pinched loop produces, in each one of its lines,  a Fuchsian singular point of the projective structure  having a ramification index~$\mu_i$,  independent of the line. Theorem~\ref{ourindthmproj} in the instance of Example~\ref{thmexasimplest} gives that
\[ \frac{1}{\nu_1^2}+\frac{1}{\nu_2^2}+\frac{1}{\nu_3^2}+\frac{1}{\nu_4^2}-\frac{1}{\mu_1^2}-\frac{1}{\mu_2^2}-\frac{1}{\mu_3^2}=1,\]
and, in particular, that the three degenerations are not independent. 
\end{exam}

\begin{rmk} For a foliation  of degree two on~$\mathbf{P}^2$ its canonical bundle is~\(O(1)\), its square~\(O(2)\), and the dimension of the affine space of foliated projective structures is six. For such a  foliation~$\mathcal{F}$,   the rational map that to each foliated projective structure subordinate to~$\mathcal{F}$ associates the seven expressions~$\nu_1^{(i)}\nu_2^{(i)}$ takes values in the six-dimensional variety given by~(\ref{for:projeven}). A calculation we have made with a computer algebra system  shows that,  at a generic foliated projective structure subordinate to the foliation of the previous example, the differential of this map has full rank. In particular, together with Corollary~3.3 in~\cite{guillot-quadratic}, this implies that \emph{in the space of foliations of degree two of~$\mathbf{P}^2$ endowed with a foliated projective structure, the projective ramification indices determine a generic element (both the foliation and the projective structure) up to a finite indeterminacy}. 
\end{rmk}

\begin{rmk}  In Theorem~\ref{ourindthmproj}, singularities of~$\mathcal{F}$ with vanishing Christoffel symbols may be considered (still under  the hypothesis of non-degeneracy). The contribution to the left-hand side of~(\ref{eq:tracepown}) of such a singularity reduces to its Baum-Bott index associated to~$\widehat{\varphi}_n$. Details will be left to the reader.
\end{rmk}

For the proof of Theorem~\ref{ourindthmproj}, we will construct a geodesic vector field for the foliated  projective structure and apply the  Baum-Bott index theorem to a foliation associated to it.

\subsection{The geodesic vector field and its projectivization}

Let~$\mathcal{F}$ be a foliation by curves on the manifold~$M$. Let us recall the definition of the vector bundle $J^1T_\mathcal{F}\to M$ of one-jets of~$T_\mathcal{F}$. Let~$p\in M$ and let~$Z$ be an auxiliary vector field generating~$\mathcal{F}$ in a neighborhood of~$p$. Two vector fields tangent to~$\mathcal{F}$, $fZ$ and~$gZ$,  are said to have the same $1$\nobreakdash-jet at~$p$ if~$f(p)=g(p)$ and~$(Zf)(p)=(Zg)(p)$. This   depends only on~$\mathcal{F}$ and not on the auxiliary vector field~$Z$. The union  of all $1$-jets of vector fields tangent to~$\mathcal{F}$ over all points of~$M$ has a natural rank-two vector bundle structure, that we will denote by~$J^1T_\mathcal{F}\to M$. There is a natural linear projection~$j_0:J^1T_\mathcal{F}\to T_\mathcal{F}$ that maps each $1$-jet to its associated~$0$-jet. 

If~$\mathcal{F}$ is endowed with a foliated projective structure  and~$p\in M\setminus\mathrm{Sing}(\mathcal{F})$,  a \emph{geodesic} through~$p$ is a parametrized curve~$f:(U,0)\to (M,p)$,  $U\subset \mathbf{C}$, $0\in U$, which is tangent to~$\mathcal{F}$ and which induces the given projective structure in the leaf of~$\mathcal{F}$ through~$p$ (which is the inverse of a projective chart).  For the tautological projective structure in~$\mathbf{P}^1$, in the affine chart~$[z:1]$, the geodesics through~$0$ are those of the form~$t\mapsto at/(1-bt)$ with~$a\in\mathbf{C}^*$, $b\in\mathbf{C}$. Their corresponding  velocity vector fields are~$a^{-1}(a+bz)^2\indel{z}$. Each one of them is characterized by its~$1$-jet  at~$0$. Furthermore, every $1$-jet of vector field   with a non-vanishing $0$-jet is realized by the velocity vector field of a geodesic.

In this way, above the regular part of~$\mathcal{F}$,  the geodesics of a foliated projective structure lift into~$J^1 T_\mathcal{F} \setminus\ker(j_0)$ through their velocity vector fields, and there is a lift of a unique geodesic through every point in~$J^1 T_\mathcal{F}\setminus\ker(j_0)$. There is thus a natural  vector field on~$J^1 T_\mathcal{F} \setminus\ker(j_0)$ associated to a foliated projective structure.  This is its \emph{geodesic vector field}.

If~$f:U\to M$ is a geodesic defined in a neighborhood of~$t=0$ and~$\left(\begin{array}{cc}a & b \\ c & d \end{array}\right)\in\mathrm{SL}(2,\mathbf{C})$ is sufficiently close to the identity, $\displaystyle f\left(\frac{at+b}{ct+d}\right)$ is also a geodesic: we have a local action of~$\mathrm{SL}(2,\mathbf{C})$ on~$J^1 T_\mathcal{F} \setminus\ker(j_0)$ induced by the foliated projective structure.

In coordinates,   this looks as follows.
Let~$\{U_i\}_{i\in I}$ be a cover of~$M$ by open subsets such that $\mathcal{F}$ is given by the holomorphic vector field~$Z_j$ in~$U_j$.  Let~$g_{ij}:U_i\cap U_j\to\mathbf{C}^*$ be  such that~$Z_i=g_{ij}Z_j$. Consider the map~$h_i:J^1 T_\mathcal{F}|_{U_i}\to\mathcal{O}(U_i)\times \mathcal{O}(U_i)$ given by: 
\begin{equation}\label{twocomponentsjets}fZ_i\mapsto (f,Z_i(f)).\end{equation}
It is well-defined and linear, and gives a trivialization of~$J^1 T_\mathcal{F} $ over~$U_i$. For
\begin{equation}\label{cocycleproj}\psi_{ji}=\left(\begin{array}{cc} g_{ij} & 0 \\   Z_j(g_{ij}) & 1\end{array} \right),\end{equation}
we have that~$\psi_{ji}h_i\equiv h_j$:
\begin{multline*}\psi_{ji}h_i(fZ_i)=\left(\begin{array}{cc} g_{ij} & 0 \\   Z_j(g_{ij}) & 1\end{array} \right)\left(\begin{array}{c} f  \\    Z_i(f)\end{array} \right)=\left(\begin{array}{c} g_{ij}f  \\  fZ_j(g_{ij})+  Z_i(f)\end{array} \right) \\ =\left(\begin{array}{c} g_{ij}f  \\ fZ_j(g_{ij})+ g_{ij} Z_j(f)\end{array} \right) =\left(\begin{array}{c} g_{ij}f  \\  Z_j(g_{ij} f)\end{array} \right)=h_j(fg_{ij}Z_j)=h_j(fZ_i). \end{multline*}
In particular, the cocycle conditions~$\psi_{ij}\psi_{jk}\psi_{ki}\equiv \mathbf{I}$ and $\psi_{ij}\psi_{ji}\equiv \mathbf{I}$ hold.

The rank two vector bundle~$J^1 T_\mathcal{F} $  may in consequence be obtained by gluing the sets in~$\{U_i\times\mathbf{C}^2\}_{i\in I}$ by means of the identifications
\begin{equation}\label{for:ranktwo}\left(\begin{array}{c} \zeta_j \\ \xi_j  \end{array}\right)
=\psi_{ji}\left(\begin{array}{c} \zeta_i \\ \xi_i  \end{array}\right).\end{equation}
In these coordinates, $j_0(\zeta_i,\xi_i)= \zeta_i$. From~(\ref{cocycleproj}),  the line subbundle~$\ker(j_0)$ is trivial.

Let us make explicit the previously defined local action of~$\mathrm{SL}(2,\mathbf{C})$ on $J^1 T_\mathcal{F}$ through the action of the one-parameter subgroups of the standard basis of~$\mathfrak{sl}(2,\mathbf{C})$. The vector fields on~$J^1T_\mathcal{F}$ associated to these will satisfy the same Lie-algebraic relations than their infinitesimal counterparts do.

Let~$f:U\to M$ be a geodesic.  That the action via reparametrization of the one-parameter subgroup~$\left(\begin{array}{cc} 1 & s \\ 0 & 1 \end{array}\right)$ preserves  geodesics is equivalent to the fact that if~$f(t)$ is a parameterized geodesic, so is~$f(t+s)$ for every (sufficiently small) fixed~$s$. This reparametrization comes from the flow of the geodesic vector field. The one induced by the one-parameter subgroup~$\left(\begin{array}{cc}e^{s/2} & 0 \\ 0 & e^{-s/2} \end{array}\right)$ yields the   geodesic~$t\mapsto f(e^st)$. It multiplies the velocity vector field of~$f$ by the factor~$e^s$. In coordinates, it multiplies each one of the two components of~(\ref{twocomponentsjets}) by~$e^s$, and is thus induced by the vector field~$H$ on~$J^1 T_\mathcal{F} $ that, in~$U_j\times\mathbf{C}^2$, reads~$\zeta_j\indel{\zeta_j}\oplus \xi_j\indel{\xi_j}$ (it retains its expression in the other charts and is globally well-defined). Let us now consider the more interesting case of the one-parameter subgroup~$\left(\begin{array}{rr} 1 & 0 \\ -s & 1 \end{array}\right)$. Let us suppose that we are on a curve where we have a local coordinate~$z$ and that~$f(0)=0$.  The reparametrization gives the geodesic~$\displaystyle f\left(\frac{t}{1-st}\right)$, whose velocity vector field is
\[(1+sf^{-1}(z))f'(f^{-1}(z))\del{z}.\]
With respect to the vector field~$Z=g(z)\indel{z}$, the~$1$-jet of its velocity vector field,  as defined in~(\ref{twocomponentsjets}), is
\[\left(\frac{f(0)}{g(0)},-\frac{g'(0)}{g(0)}+\frac{f''(0)}{f'(0)}+2s\right).\]
The action of this one-parameter subgroup induces the vector field~$Y$ on~$J^1 T_\mathcal{F} $ that reads~$2\indel{\xi_j}$
on~$U_j\times\mathbf{C}^2$. As expected,  $[H,Y]=-Y$.

If~$X$  is a vector field on~$J^1 T_\mathcal{F}$ giving  the geodesic vector field of  a foliated projective structure, its integral curves project to curves of~$\mathcal{F}$ and, together with the vector fields~$H$ and~$Y$,  it satisfies the~$\mathfrak{sl}(2,\mathbf{C})$ relations
\begin{equation}\label{sl2}[Y,X]=2H, [H,X]=X, [H,Y]=-Y.\end{equation} 
The integral form of these relations gives the reparametrization of the geodesics on~$J^1 T_\mathcal{F}$: if~$(z(t),\zeta_j(t),\xi_j(t))$ is a solution of~$X_j$ and~$\left(\begin{array}{cc} a & b \\ c & d \end{array}\right)\in\mathrm{SL}(2,\mathbf{C})$ is sufficiently close to the identity, 
\[\left(z\left(\frac{at+b}{ct+d}\right), \frac{1}{(ct+d)^2}\zeta_j\left(\frac{at+b}{ct+d}\right), \frac{1}{(ct+d)^2}\xi_j\left(\frac{at+b}{ct+d}\right)-\frac{2c}{ct+d}\right)\]
is also a solution of~$X$.

\begin{rmk}
The above formula is  similar to the ``invariance condition'' enjoyed by Halphen's system~\cite{halphen}. This is not surprising, since the latter is essentially a compactification of the geodesic flow of the projective structure on the thrice-puncture sphere given by uniformization; see~\cite[Section~3]{guillot-halphen}.
\end{rmk}

Let us explicitly construct the vector field~$X$ associated to a  foliated projective connection~$\Xi:T_\mathcal{F}\to\mathcal{O}_M$. Let~$\rho_i:U_i\to\mathbf{C}$ be the Christoffel symbol~$\Xi(Z_i)$ of~$Z_i$. Consider, on~$U_j\times \mathbf{C}^2$, the vector field 
\[ X_j=\zeta_j Z_j\oplus \zeta_j\xi_j\del{\zeta_j}\oplus\left(\frac{1}{2}\xi_j^2-\rho_j\zeta_j^2\right)\del{\xi_j}.\]
In~$(U_i\times\mathbf{C}^2)\cap (U_j\times\mathbf{C}^2)$, under~(\ref{for:ranktwo}),
$X_j$ reads
\[g_{ij}\zeta_i Z_j\oplus \zeta_i\xi_i\del{\zeta_i}\oplus\left(\frac{1}{2}\xi_i^2-\left[g_{ij}^2 \rho_j+g_{ij}Z_j^2 g_{ij} -\frac{1}{2}(Z_jg_{ij})^2  \right]\zeta_i^2\right)\del{\xi_i},\]
which, by the Leibniz rule~(\ref{christproj}), is exactly~$X_i$. Thus, these vector fields glue into a globally-defined holomorphic vector field~$X$ on~$J^1 T_\mathcal{F}$. It satisfies the relations~(\ref{sl2}); since the projections of its integral curves onto~$M$ differ by precompositions with fractional linear transformations, it induces a foliated projective structure.

Let us identify the foliated projective structure induced by the  vector field~$X$ just defined. Let us do so in dimension one, in a coordinate~$z$ where~$Z$ is~$\indel{z}$, this is, for the vector field
$X=\zeta \indel{z}+ \zeta\xi\indel{\zeta}+(\frac{1}{2}\xi^2-\rho\zeta^2)\indel{\xi}$ and~$\widehat{\pi}(z,\zeta,\xi)=z$. Let~$(z(t),\zeta(t),\xi(t))$ be a solution to~$X$. Comparing the projective structures induced by~$z$ and~$t$ in the base, we have
\[ \{t,z(t)\}=-\frac{1}{(z'(t))^2}\{z(t),t\}=\rho(z).\]
It follows from this formula that for the projective structure induced by~$X$, the Christoffel symbol of~$\indel{z}$ is~$\rho$, and coincides, as we sought to establish, with the one induced by the projective connection~$\Xi$.

Notice that the vector field~$X$ is defined on all of~$J^1 T_\mathcal{F}$ and that it is transverse to~$\pi$ over~$M\setminus\mathrm{Sing}(\mathcal{F})$  away from~$\ker(j_0)$.

\begin{rmk} The  conditions~(\ref{sl2}) that the geodesic vector field of a foliated projective structure must satisfy are also sufficient ones. From this, we have an equivalent formulation for our definition:  \emph{a foliated projective structure subordinate to~$\mathcal{F}$ is a vector field~$X$ on~$J^1 T_\mathcal{F}$ that projects onto~$\mathcal{F}$ and that satisfies the relations~(\ref{sl2})} with the vector fields~$H$ and~$Y$.
\end{rmk}

Let~$\pi:\mathbf{P}(J^1T_\mathcal{F})\to M$ be the projectivization of~$J^1 T_\mathcal{F}$.  The foliation by curves induced by~$X$ on~$J^1 T_\mathcal{F}$ is invariant by the flow of~$H$ and, thus, the total space of~$\mathbf{P}(J^1T_\mathcal{F})$  inherits a foliation by curves~$\mathcal{G}$ that projects onto~$\mathcal{F}$. This will be the main object in the proof of Theorem~\ref{ourindthmproj}.

For the expression of~$\mathcal{G}$  in local coordinates,  cover~$U_j\times\mathbf{P}^1$ by charts~$U_j^+=U_j\times\mathbf{C}$ and~$U_j^-=U_j\times\mathbf{C}$, where, in~$U_j^+$ (resp.~$U_j^-$), an affine coordinate~$u_j$ (resp.~$v_j$) for the second factor is given by~$[u_j:1]=[\xi_j:\zeta_j]$ (resp.~$[1:v_j]=[\xi_j:\zeta_j]$). In~$U_j^+$, $\mathcal{G}$ is tangent to the vector field
\begin{equation}\label{locexping}Z_j-\left(\frac{1}{2}u_j^2+\rho_j\right)\del{u_j}\end{equation} 
and, in~$U_j^-$, to~$Z_j+(\frac{1}{2}+\rho_jv_j^2)\indel{v_j}$. These last two glue together into a vector field with isolated singularities on~$U_i^+\cup U_i^{-}$ and, from~(\ref{for:ranktwo}), in~$U_j^+\cap U_i^+$, $g_{ij}u_j=  u_i+Z_j(g_{ij})$.

These calculations show that~$T_\mathcal{G}=\pi^*T_\mathcal{F}$. They also exhibit the fact that, even if~$X$ is not transverse to~$\pi$ along~$\ker(j_0)$, $\mathcal{G}$ is transverse to~$\pi$ above~$M\setminus\mathrm{Sing}(\mathcal{F})$.

For an integral curve~$C$ of~$\mathcal{F}$, the restriction of~$\mathcal{G}$  to~$\pi^{-1}(C)$ is a Riccati foliation with respect to the rational fibration~$\pi|_C$.  There is a section~$\sigma:M\to \mathbf{P}(J^1T_\mathcal{F})$  of~$\pi$ given by  the projectivization of the subbundle~$\ker(j_0)$, which is everywhere transverse to~$\mathcal{G}$ and which inherits, in consequence, a foliated projective structure subordinate to~$\mathcal{F}$. We claim that this projective structure is the one we started with. Let us prove this in dimension one,  in a coordinate where~$Z$ is~$\indel{z}$, where~$\mathcal{G}$ is generated by~$\indel{z}-(\frac{1}{2}\rho(z)+\widetilde{u}^2)\indel{\widetilde{u}}$ for~$\widetilde{u}=\frac{1}{2}u$. The charts of the projective structure induced on the line~$\widetilde{u}=\infty$  by the orbits of this vector field   are the solutions~$h$ of the Schwartzian equation~$\{h,z\}=\rho(z)$~\cite[Prop.~2.1]{loray-marin}. This proves our claim.

\begin{rmk} 
For a projective structure on a curve, the triple~$(\mathbf{P}(J^1T_\mathcal{F}), \mathcal{G}, \sigma)$ gives the  \emph{graph} of the  projective structure (see~\cite[Section~1.5]{loray-marin}). Our construction gives actually more: since~$\mathcal{G}$ comes from the quotient of~$X$, it its naturally endowed with a foliated affine structure, and~$\pi$ identifies the projective classes of the foliated affine structures with the foliated projective structure of~$\mathcal{F}$.\end{rmk}

\subsection{Proof of Theorem~\ref{ourindthmproj}} Let  us  briefly recall Baum and Bott's index theorem in the generality that we will need. Let~$N$ be a compact complex manifold of dimension~$m$, $\mathcal{H}$ a holomorphic foliation by curves  on~$N$ having only finitely many singularities, all of them non-degenerate. We will use the terminology around symmetric polynomials introduced at the beginning of Section~\ref{sec:projindex}. For~$p\in\mathrm{Sing}(\mathcal{H})$, let~$A_p$ be the linear part at~$p$ of a vector field generating~$\mathcal{H}$ in a neighborhood of~$p$. Define~$\sigma_i(A_p)$ by~$\det(\mathbf{I}+tA_p)=\sum_{i=0}^m \sigma_i(A)t^i$ and, for a symmetric polynomial $\varphi(x_1,\ldots,x_m)$,   let~$\varphi(A_p)=\widetilde{\varphi}(\sigma_1(A_p),\ldots,\sigma_m(A_p))$.

Baum and Bott's index theorem~\cite{baum-bott}  affirms that
\begin{equation}\label{eq:gen_bb}
\sum_{p\in\mathrm{Sing}(\mathcal{H})} \frac{\varphi(A_p)}{\det(A_p)}= \varphi( TN- T_\mathcal{H}).
\end{equation} 

Theorem~\ref{ourindthmproj} will follow from applying it to the foliation~$\mathcal{G}$ on~$\mathbf{P}(J^1T_\mathcal{F})$, for the same~$\varphi$ appearing in its statement.

We begin by calculating the left-hand side of~(\ref{eq:gen_bb}) for the foliation~$\mathcal{G}$ on~$\mathbf{P}(J^1T_\mathcal{F})$. Let~$p\in U_j$ be a singular point of~$\mathcal{F}$,  $Z$ a vector field generating~$\mathcal{F}$ in a neighborhood of~$p$, $\rho=\Xi(Z)$ the Christoffel symbol of~$Z$, which, by hypothesis, does not vanish at~$p$. In~$\mathbf{P}(J^1T_\mathcal{F})$, above~$p$, there  are  two singular points of~$\mathcal{G}$. At these, from~(\ref{locexping}), the ratios of the eigenvalues of a vector field tangent to~$\mathcal{G}$ are~$[\lambda_1:\cdots:\lambda_n:\sqrt{-2\rho(p)}]$ and~$[\lambda_1:\cdots:\lambda_n:-\sqrt{-2\rho(p)}]$. The sum of the contributions of these two points to the left-hand side of~(\ref{eq:gen_bb}) is
\begin{multline*}  \frac{\varphi(\lambda_1,\ldots,\lambda_n,\sqrt{-2\rho(p)})}{\lambda_1\cdots\lambda_n\sqrt{-2\rho(p)}}-\frac{\varphi(\lambda_1,\ldots,\lambda_n,-\sqrt{-2\rho(p)})}{\lambda_1\cdots\lambda_n\sqrt{-2\rho(p)}} = \\  =\frac{\varphi(\nu_1,  \ldots,  \nu_n,1)-\varphi(\nu_1,  \ldots,  \nu_n,-1)}{\nu_1  \cdots  \nu_n}=2\frac{\varphi_{\mathrm{odd}}(\nu_1,  \ldots,  \nu_n,1)}{\nu_1  \cdots  \nu_n},
\end{multline*}
where~$\nu_1,\ldots, \nu_n$ are the principal projective ramification indices of the foliated projective structure at~$p$, and where the first equality follows from~(\ref{fromeigentoindex}). This last expression is well-defined (Remark~\ref{rem:even}). Since there are no further singular points of~$\mathcal{G}$, the sum of these   terms over the singular points of~$\mathcal{F}$  gives twice the total sum in the  left-hand side of~(\ref{eq:tracepown}).

Let us now come to   the right-hand side of~(\ref{eq:gen_bb}) for the foliation~$\mathcal{G}$ on~$\mathbf{P}(J^1T_\mathcal{F})$. In order to express this right-hand side in terms of data in~$M$,  we need  a better understanding of the Chern classes of~$\mathbf{P}(J^1T_\mathcal{F})$. Grothendieck's approach~\cite{grothendieck} is particularly well adapted to the study of Chern classes of projective bundles. 

Let~$\widetilde{\pi}:V\to M$ be a vector bundle, $\pi:\mathbf{P}(V)\to M$ the associated projective bundle.  We denote by \(L\rightarrow \mathbf{P}(V) \) the  dual  of the tautological bundle, and by \(\zeta \in H^2 (\mathbf{P}(V), \mathbf{Z}) \) the Chern class of \(L\).   By Grothendieck's definition of Chern classes,  
\begin{equation}\label{eq: Grothendieck} \zeta^2 + \pi^* c_1 (V) \zeta + \pi^* c_2 (V) =0 ,\end{equation}
where \( c_k (V) \in H^{2k} (M,\mathbf{Z})\) is the $k$-th Chern class of \(V\). From the short exact  sequence
\[ 0\to   \ker(D\pi)\longrightarrow T\mathbf{P}(V) \stackrel{D\pi}{\longrightarrow} \pi^*TM\to 0, \]
for the total Chern classes we have
\[c(T\mathbf{P}(V))=c(\ker(D\pi))c(\pi^*TM).\]
\begin{prop}\label{kerdp} We have an isomorphism $\ker (D\pi)    \simeq  L^{\otimes 2} \otimes \det (V)$. In particular, $c(\ker{D\pi})=1+2\zeta+c_1(\det(V))$.
\end{prop}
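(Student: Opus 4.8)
The plan is to recognize $\ker(D\pi)$ as the relative tangent bundle $T_{\mathbf{P}(V)/M}$ and to exploit that, since $V=J^1T_\mathcal{F}$ has rank two, the fibres of $\pi$ are projective lines, so that $\ker(D\pi)$ is a \emph{line} bundle. I would start from the tautological exact sequence on $\mathbf{P}(V)$,
\[ 0\to L^{-1}\longrightarrow \pi^*V \longrightarrow Q\to 0,\]
where $L^{-1}$ is the tautological sub-line-bundle (whose dual is $L$, with $c_1(L)=\zeta$) and $Q$ is the quotient bundle, which is a line bundle precisely because $\mathrm{rank}(V)=2$. The classical identification of the tangent space to a projective space of lines, $T_{[\ell]}\mathbf{P}(V_x)\simeq \mathrm{Hom}(\ell,V_x/\ell)$, globalizes to a canonical isomorphism $\ker(D\pi)\simeq L\otimes Q$; equivalently, tensoring the displayed sequence by $L$ produces the relative Euler sequence $0\to\mathcal{O}_{\mathbf{P}(V)}\to \pi^*V\otimes L\to \ker(D\pi)\to 0$.

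It then remains to compute $Q$. Taking determinants in the tautological sequence and using that $Q$ has rank one gives $Q\simeq \det(\pi^*V)\otimes L=\pi^*\det(V)\otimes L$. Substituting into $\ker(D\pi)\simeq L\otimes Q$ yields $\ker(D\pi)\simeq L^{\otimes 2}\otimes \pi^*\det(V)$, which is the asserted isomorphism (with $\pi^*$ suppressed, as in the statement). Passing to first Chern classes, and recalling that $c_1(\det(V))=c_1(V)$, gives $c_1(\ker(D\pi))=2\zeta+c_1(\det(V))$; since $\ker(D\pi)$ is a line bundle its total Chern class is $1+c_1$, whence $c(\ker(D\pi))=1+2\zeta+c_1(\det(V))$.

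There is no genuine difficulty here: the whole argument rests on the tautological exact sequence on $\mathbf{P}(V)$ together with the rank-two hypothesis, which is exactly what makes $Q$ a line bundle and collapses the relative Euler sequence to the clean form above (in higher rank the same manipulation would only compute $\det(\ker(D\pi))$). The one point deserving care is the bookkeeping of conventions — that $L$ is the dual of the tautological bundle, and that with this normalization Grothendieck's relation \eqref{eq: Grothendieck} is the one compatible with $0\to L^{-1}\to\pi^*V\to Q\to 0$ — but this has already been pinned down in the text preceding the statement.
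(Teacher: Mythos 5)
Your proof is correct and takes essentially the same route as the paper's: both rest on the canonical fibrewise identification $T_l\mathbf P(V_x)\simeq \mathrm{Hom}(l,V_x/l)$, i.e.\ $\ker(D\pi)\simeq \mathrm{Hom}(L^{-1},Q)=L\otimes Q$, and differ only in how the remaining identification $Q\simeq L\otimes\pi^*\det(V)$ is obtained — you take determinants in the tautological sequence, while the paper realizes the same isomorphism by the explicit pairing $(\omega,\psi)\mapsto\bigl(u\mapsto\omega(u,\psi(u))\bigr)$ with $\omega\in\wedge^2V_x^*$. Your convention check is also consistent with the paper's normalization of $L$ and of relation \eqref{eq: Grothendieck}, so there is nothing to fix.
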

\begin{proof} We have the following canonical isomorphism:  given \(x\in M\) and \(l \in \mathbf P (V_x)\) (a line in \(V_x\)) we have 	
\begin{equation} \label{eq: tangent bundle P1} T_l \mathbf P (V_x) \simeq \mathrm{Hom} (l , V_x / l ).
\end{equation}
Indeed, the derivative of the projectivization \(p : V_x \setminus \{0\}  \rightarrow \mathbf P (V_x) \) induces for each \(u\in l \) an isomorphism \( Dp _u : V_x / l \rightarrow T_x \mathbf P (V_x) \)  that satisfies 
\begin{equation} \label{eq: homogeneity} \lambda  Dp_{\lambda u } = Dp_u \text{ for any } u\in l \text{ and } \lambda \in \mathbf{C}, \end{equation}
since \(p\) is invariant by multiplication by \(\lambda\in \mathbf{C}\). The isomorphism \eqref{eq: tangent bundle P1} is then defined by the formula 
\[ v \in T_l \mathbf P(V_x) \text{ corresponds to } u \in l \stackrel {\varphi_v}{\longrightarrow} (Dp_u) ^{-1} (v) \in V_x/ l, \]
and equation \eqref{eq: homogeneity} shows that \(v\mapsto \varphi_v \) is linear.
	
Given \( \omega\in \wedge ^2 V_x ^* \) (a dual of the determinant bundle) and \( \psi  \in \mathrm{Hom} (l , V_x / l )\) we can form the quadratic polynomial \(\varphi \) on \(l\) (an element of \( L^{\otimes 2} \)) by the formula~$\varphi ( u ) = \omega( u , \psi(u) )$. This operation produces the desired isomorphism of line bundles over~\(\mathbf P (V)\).\end{proof}

Associated to~$\pi:\mathbf{P}(V)\to M$	we have the \emph{integration along the fibers} (or \emph{transfer})  map $\pi^!:H^l(\mathbf{P}(V),\mathbf{Z})\to H^{l-2}(M,\mathbf{Z})$~\cite[Ch.~VIII]{dold}. It satisfies the  product formula $\pi^!(\alpha\cdot \pi^*\beta)=\pi^!\alpha\cdot\beta$  as well as the Fubini  relation $\langle\alpha,[\mathbf{P}(V)]\rangle=\langle\pi^!\alpha,[M]\rangle$ (here, $\langle,\rangle$ denotes the cohomology-homology pairing and~$[\cdot]$ the fundamental class). We have~$\pi^!\zeta=1$. In particular,
\begin{equation}\label{umkehr}\langle \zeta\cdot \pi^*\beta,[\mathbf{P}(V)]\rangle= \langle \pi^!(\zeta\cdot \pi^*\beta),[M]\rangle= \langle  \pi^!\zeta\cdot\beta,[M]\rangle= \langle  \beta,[M]\rangle. \end{equation}

In our setting, $J^{1}T_\mathcal{F}$  is an extension of~$T_\mathcal{F}$ by the trivial bundle, so~$\det(J^{1}T_\mathcal{F})\simeq T_\mathcal{F}$ and,  for the total Chern class, $c(J^{1}T_\mathcal{F})=c(T_\mathcal{F})$, this is,
$c_1(J^{1}T_\mathcal{F})=c_1(T_\mathcal{F})$  and  $c_2(J^{1}T_\mathcal{F})=0$. From~(\ref{eq: Grothendieck}), for~$V=J^{1}T_\mathcal{F}$, $\zeta^2 = - \pi^* c_1 (T_\mathcal{F})  \zeta$, and, for all~$k\geq 1$,
\begin{equation} \label{eq: iteration} \zeta^{k} = (- \pi^* c_1 (T_\mathcal{F}) ) ^{k-1} \zeta.  \end{equation}

From the previously established identification~$T_\mathcal{G}=\pi^{*}T_\mathcal{F}$, 
\begin{multline}\label{cherndecomp} c(T\mathbf{P}(J^1T_\mathcal{F})-T_\mathcal{G})=\frac{c(T\mathbf{P}(J^1T_\mathcal{F}))}{c(T_\mathcal{G})}=\frac{c(\ker (D\pi))c(\pi^*TM)}{c(\pi^*T_\mathcal{F})}  \\ =(1+c_1(\ker(D\pi)))\pi^*c(TM-T_\mathcal{F}).
\end{multline}
If, as in~(\ref{symdecomp}), $\varphi(x_1,\ldots,x_{n+1})=\sum_{i=0}^{n+1} x_{n+1}^i\widehat\varphi_{n+1-i}(x_1,\ldots,x_{n})$, from~(\ref{cherndecomp}) we have, setting~$\kappa=c_1(\ker(D\pi))$, 
\begin{equation}\label{develop}\varphi(c(T\mathbf{P}(J^1T_\mathcal{F})-T_\mathcal{G}))=\sum_{i=0}^{n+1} \kappa^i\cdot \widehat\varphi_{n+1-i}(\pi^*c(TM-T_\mathcal{F})).\end{equation}
From the equality~$\kappa=2\zeta+\pi^*c_1(T_\mathcal{F})$ established in Proposition~\ref{kerdp} and from~(\ref{eq: iteration}), $\kappa^2=\pi^*c_1^2(T_\mathcal{F})$, and thus
\[ \kappa^i=\begin{cases}  \pi^*c_1^i(T_\mathcal{F}) +2\zeta\cdot \pi^*c_1^{i-1}(T_\mathcal{F})  & \text{if }  i \text{ is odd,} \\  \pi^*c_1^i(T_\mathcal{F})   & \text{if }   i \text{ is even.}  \end{cases}\]
Hence, (\ref{develop}) equals 
\[\pi^*\sum_{i=0}^{n+1} c_1^{i}(T_\mathcal{F})   \widehat\varphi_{n+1-i}( c(TM-T_\mathcal{F})) +2\zeta\cdot \pi^* \sum_{j=0}^{\lfloor n/2\rfloor }c_1^{2j}(T_\mathcal{F})   \widehat\varphi_{n-2j} (c(TM-T_\mathcal{F})),\]
but the first summand is trivial, since it is the pull-back of classes in~$M$ whose degree   exceeds the dimension of~$M$. We conclude that, on~$\mathbf{P}(J^1T_\mathcal{F})$,
\[   \varphi(c(T\mathbf{P}(J^1T_\mathcal{F})-T_\mathcal{G}))   = 2  \zeta \cdot\pi^*\sum_{j=0}^{\lfloor n/2\rfloor }c_1^{2j}(T_\mathcal{F})   \widehat\varphi_{n-2j} (c(TM-T_\mathcal{F}))  .\]
By~(\ref{umkehr}), this expression equals twice the right-hand side of~(\ref{eq:tracepown}).  This finishes the proof of Theorem~\ref{ourindthmproj}.

\section{Regular foliations}

The index theorems of the previous sections   impose severe restrictions on foliated affine and projective structures along regular foliations. On surfaces, they will allow for a full classification of these structures.

\subsection{Some consequences of the index theorems} For a compact surface, the existence of a regular foliation   supporting a foliated projective structure greatly limits its topology:

\begin{corollary}\label{zerosign} A compact complex surface admitting a regular foliation that supports a foliated projective structure has vanishing signature. \end{corollary}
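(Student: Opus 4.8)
The plan is to read the signature of $S$ off from Chern numbers and then annihilate those Chern numbers, using the index formula \eqref{for:projeven} together with the Baum--Bott theorem \eqref{eq:gen_bb}, exploiting crucially that $\mathrm{Sing}(\mathcal{F})=\emptyset$, so that every residue sum occurring is empty.

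First I would record the Chern-class identities coming from regularity: since $\mathcal{F}$ is a regular foliation on the surface $S$, the normal bundle $N_\mathcal{F}=TS/T_\mathcal{F}$ is a genuine line bundle and $0\to T_\mathcal{F}\to TS\to N_\mathcal{F}\to 0$ is exact, whence $c_1(TS)=c_1(T_\mathcal{F})+c_1(N_\mathcal{F})$ and $c_2(TS)=c_1(T_\mathcal{F})\,c_1(N_\mathcal{F})$. Substituting these into the Hirzebruch signature theorem in its complex-surface form $3\,\sigma(S)=c_1^2(TS)-2\,c_2(TS)$ (the evaluation on $[S]$ of the first Pontryagin class $p_1=c_1^2-2c_2$) and expanding, the cross term $2\,c_1(T_\mathcal{F})\,c_1(N_\mathcal{F})$ cancels and one is left with
\[ 3\,\sigma(S)=c_1^2(T_\mathcal{F})+c_1^2(N_\mathcal{F}). \]
So it suffices to prove that both self-intersection numbers vanish.

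For the first, I would invoke Theorem~\ref{ourindthmproj} specialized as in Example~\ref{thmexasimplest}, i.e.\ formula \eqref{for:projeven} with $n=2$: the hypothesis that $\mathcal{F}$ supports a foliated projective structure is exactly what makes this identity available, and since $\mathcal{F}$ has no singular points the left-hand side is an empty sum, giving $c_1^2(T_\mathcal{F})=0$ (the non-degeneracy and non-vanishing-Christoffel-symbol hypotheses of that theorem are satisfied vacuously). For the second, I would apply the Baum--Bott index theorem \eqref{eq:gen_bb} to $\mathcal{F}$ itself with $\varphi(x_1,x_2)=(x_1+x_2)^2$, for which $\widetilde\varphi(\sigma_1,\sigma_2)=\sigma_1^2$ and hence $\varphi(TS-T_\mathcal{F})=c_1(TS-T_\mathcal{F})^2=c_1^2(N_\mathcal{F})$; once more the left-hand side is empty, so $c_1^2(N_\mathcal{F})=0$ (this could equally be read off from Bott's vanishing theorem for the nonsingular foliation $\mathcal{F}$). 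Combining the two, $3\,\sigma(S)=0$, hence $\sigma(S)=0$.

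There is no serious obstacle here beyond bookkeeping; the work is really in the two index theorems already proved. The two points deserving a word of care are that one must use the surface-specific identity $p_1=c_1^2-2c_2$ in the signature theorem, and that the hypotheses of Theorem~\ref{ourindthmproj} degenerate harmlessly in the regular case so that its conclusion reduces to $c_1^2(T_\mathcal{F})=0$. It is worth emphasizing in the write-up that the foliated projective structure is used \emph{only} to obtain $c_1^2(T_\mathcal{F})=0$, whereas $c_1^2(N_\mathcal{F})=0$ holds for every regular foliation on a compact complex surface; this is precisely why a Kodaira fibration, whose relative canonical self-intersection $c_1^2(T_\mathcal{F})=K_{S/B}^2$ is positive and whose signature is therefore nonzero, can carry no foliated projective structure.
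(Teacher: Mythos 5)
Your proof is correct and follows essentially the same route as the paper: the decisive step in both is that formula \eqref{for:projeven} (Example~\ref{thmexasimplest} of Theorem~\ref{ourindthmproj}) forces $c_1^2(T_\mathcal{F})=0$ because the singular set is empty. The only cosmetic difference is that the paper quotes from Brunella the identity $c_1^2(T_\mathcal{F})=c_1^2(M)-2c_2(M)$ for regular foliations on surfaces, which is exactly the combination of your Whitney-sum computation with the Baum--Bott vanishing $c_1^2(N_\mathcal{F})=0$.
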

\begin{proof}If~$M$ is a compact complex surface that  admits a regular foliation~$\mathcal{F}$, a consequence  of the Baum-Bott index theorem is that  $c_1^2(T_\mathcal{F})=c_1^2(M)-2c_2(M)$~\cite[Section~2]{brunella-nonsing}. (From Hirzebruch's formula, the signature~$\tau(M)$ equals~$\frac{1}{3}(c_1^2(M)-2c_2(M))$~\cite[Ch.~I, \S 3]{bphv}.) If such an~$\mathcal{F}$ supports a foliated projective structure,  from the instance of Theorem~\ref{ourindthmproj} in Example~\ref{thmexasimplest}, $c_1^2(T_\mathcal{F})=0$, and the signature of~$M$ vanishes.
\end{proof}

Since Kodaira fibrations  have non-vanishing signature~\cite[Ch.~V, \S 14]{bphv}, this gives another proof of the fact that they do not support foliated projective structures.

This obstruction, together with the classification of regular foliations on surfaces provided by Brunella~\cite{brunella-nonsing}, permits to list all the regular foliations on complex surfaces that admit foliated projective structures. (In section~\ref{sec:class2dreg} we will give a classification of the affine and projective structures on these foliations.)  
\begin{corollary}\label{c: classification} The regular foliations on  compact complex surfaces that admit   foliated projective structures are: isotrivial   fibrations, suspensions, linear foliations on tori, turbulent foliations, evident foliations on Hopf or Inoue surfaces,  and evident foliations in quotients of the bidisk. \end{corollary}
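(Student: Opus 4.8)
The plan is to combine the signature obstruction from Corollary~\ref{zerosign} with Brunella's classification of regular foliations on compact complex surfaces~\cite{brunella-nonsing}, proceeding case by case through the items of that classification. First I would recall that Brunella's theorem lists the regular foliations on compact complex surfaces as belonging to a short list of families: foliations on surfaces with $\kappa = -\infty$ (ruled surfaces over curves, where the foliation is either a fibration, a suspension, or transverse to the ruling up to turbulence), linear foliations on tori and hyperelliptic surfaces, evident foliations on Hopf and Inoue surfaces, (iso)trivial and non-isotrivial fibrations (including Kodaira fibrations) on surfaces of general type, turbulent foliations on elliptic surfaces, and foliations on surfaces that are quotients of the bidisk $\mathbf{H}\times\mathbf{H}$ (where the foliation is one of the two tautological ones). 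The strategy is: for each family, either exhibit a foliated projective structure directly (using the examples already constructed in the paper — suspensions in Example~\ref{ex:susp}, turbulent foliations in Example~\ref{ex:turb}, Hopf and Inoue surfaces via their foliated affine structures in Examples~\ref{exam-inoue} and~\ref{exam-hopf}, elliptic and more generally isotrivial fibrations via Example~\ref{ex: elliptic fibrations}, linear foliations on tori via Corollary~\ref{cor:tor-k2} or directly, and the tautological foliations on bidisk quotients which carry a leafwise hyperbolic hence projective structure that does vary holomorphically), or else rule it out using the vanishing-signature criterion.

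The main step is therefore to show that the families \emph{not} on the list — precisely the non-isotrivial (Kodaira) fibrations and the genuinely non-isotrivial/general-type foliations that are not fibrations and not bidisk quotients — cannot support a foliated projective structure. For Kodaira fibrations this is immediate from Corollary~\ref{zerosign}, since such surfaces have non-vanishing signature~\cite[Ch.~V, \S 14]{bphv}. For the remaining general-type cases one invokes Brunella's finer analysis: a regular foliation on a surface of general type is either a fibration (isotrivial, hence on the list, or Kodaira, hence excluded) or the surface is a quotient of the bidisk with the foliation tautological; there is no third possibility. Thus the dichotomy "admits a foliated projective structure $\Rightarrow$ signature zero" combined with "signature zero general-type foliated surfaces from Brunella's list are exactly the isotrivial fibrations and the bidisk quotients" closes the general-type case. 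For surfaces of non-general type (Kodaira dimension $\leq 1$), Brunella's list already consists entirely of items appearing in our statement, and each has been shown above to carry a foliated projective structure; in particular there is nothing to exclude there, the signature obstruction being automatically satisfied.

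First I would state explicitly the enumeration of Brunella's classes, then organize the proof into two directions. The "these do admit structures" direction is a checklist referencing the constructions recalled earlier in the paper; I would be slightly careful with the isotrivial fibration case (reduce to a finite étale cover that is a product, build the structure there, and descend, using that the structure is canonical when $H^0(K_\mathcal{F}^2)$ obstructs uniqueness — or simply note isotriviality gives a flat family of projective structures via uniformization of the fixed fiber). The "nothing else admits one" direction is where Corollary~\ref{zerosign} does the work, together with the topological input that Kodaira fibrations and, more generally, the general-type regular foliations absent from the list all live on surfaces of non-zero signature or are outright excluded by Brunella's dichotomy. The step I expect to be the main obstacle is making the general-type non-fibration case airtight: one must quote Brunella's structure theorem in the precise form that says such a foliation forces the surface to be a bidisk quotient with the foliation tautological, and check that this covers all regular foliations of general type that are not fibrations — any gap there (e.g.\ foliations with a dense leaf on a surface of general type that is not a bidisk quotient) would have to be excluded separately, presumably again via the signature. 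I would also double-check that on bidisk quotients the leafwise hyperbolic structure genuinely gives a \emph{holomorphic} foliated projective structure in our sense (it does, because the holonomy is the restriction of a holomorphic $\mathrm{PSL}(2,\mathbf{C})$-action), since the introduction explicitly warns that leafwise hyperbolic geometry only rarely produces foliated projective structures.
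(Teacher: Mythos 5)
Your proposal is correct and follows essentially the same route as the paper: Brunella's classification combined with the signature obstruction of Corollary~\ref{zerosign} to exclude Kodaira fibrations, with the existence half handled by the constructions already given in the Examples (suspensions, turbulent foliations, Hopf and Inoue surfaces, elliptic and isotrivial fibrations, linear foliations on tori, and the tautological foliations on bidisk quotients). The one step you flagged as a possible obstacle is resolved exactly as you guessed: the paper invokes the closing remarks of~\cite{brunella-nonsing}, according to which the transversely hyperbolic foliations with dense leaves living on surfaces of vanishing signature are precisely those on quotients of the bidisk, so Corollary~\ref{zerosign} disposes of that residual case as well.
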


\begin{proof} Regular foliations on surfaces were classified by Brunella~\cite[Thm.~2]{brunella-nonsing}. Other than the ones in the previous list,  there are  non-isotrivial fibrations and  some transversely hyperbolic foliations with dense leaves. But non-isotrivial fibrations are necessarily Kodaira ones, since regular elliptic fibrations are necessarily isotrivial, and among the transversely hyperbolic foliations with dense leaves, those supported on surfaces of vanishing signature are quotients of the bidisk (see the closing remarks in~\cite{brunella-nonsing}). 
	
There remains to exhibit foliated projective structures for all the foliations in the above list. Linear foliations on tori are tangent to holomorphic vector fields, and have a foliated translation structure.  Rational fibrations have foliated projective structures (say, by Savel'ev's theorem~\cite{savelev}), elliptic ones carry foliated affine structures by the results in Example~\ref{ex: elliptic fibrations}, and fibrations of higher genus supporting foliated projective structures are   isotrivial, and hence have foliated projective structures (e.g. the complete hyperbolic ones along the fibers).  The existence of foliated affine or  projective structures for suspensions, elliptic fibrations, turbulent foliations, and Hopf and Inoue surfaces has already been addressed in Examples~\ref{ex:susp}, \ref{ex:turb}, \ref{exam-inoue} and~\ref{exam-hopf}.    
\end{proof}

Corollary \ref{c: classification} shows that a regular foliation on an algebraic  compact complex surface carries a foliated projective structure if and only if it is not a foliation of general type.  One direction can be directly proved, more generally,  for all manifolds of even dimension:

\begin{prop}\label{typgen-noproj} On a compact algebraic manifold of even dimension, a regular foliation of general type cannot support a foliated projective structure.  
\end{prop}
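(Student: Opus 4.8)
The plan is to play the index formula~(\ref{for:projeven}) against the positivity of $K_{\mathcal F}$ that the ``general type'' hypothesis imposes. Assume, seeking a contradiction, that the regular foliation~$\mathcal{F}$ of general type, on the compact algebraic manifold~$M$ of even dimension~$n$, carries a foliated projective structure. Since~$\mathcal{F}$ is regular, the non-degeneracy and non-vanishing hypotheses in Theorem~\ref{ourindthmproj} hold vacuously, and its instance from Example~\ref{thmexasimplest} --- formula~(\ref{for:projeven}) --- applies with empty index set on the left-hand side: we obtain $c_1^n(T_{\mathcal F})=0$, equivalently $c_1(K_{\mathcal F})^n=0$ (recall that~$n$ is even). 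On the other hand, ``$\mathcal{F}$ of general type'' means precisely that $K_{\mathcal F}$ is big. The proposition then follows from the claim that $K_{\mathcal F}$ is nef: a nef and big line bundle~$L$ on an $n$-dimensional projective manifold satisfies $c_1(L)^n=\mathrm{vol}(L)>0$, which contradicts $c_1(K_{\mathcal F})^n=0$.

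It thus remains to see that $K_{\mathcal F}$ is nef. Suppose not; then there is an irreducible curve $C\subset M$ with $K_{\mathcal F}\cdot C<0$. If $C$ is $\mathcal{F}$-invariant, regularity forces it to be smooth --- two analytic branches at a point would both be tangent to the single line $T_{\mathcal F}$ there, hence contained in the unique embedded local leaf, hence equal --- and the inclusion $T_{\mathcal F}|_C\hookrightarrow TM|_C$ identifies $T_{\mathcal F}|_C$ with $TC$, so $K_{\mathcal F}\cdot C=2g(C)-2$; being negative, this means $C\cong\mathbf{P}^1$ is a compact rational leaf. As $C$ is simply connected its holonomy is trivial, so by Reeb's stability theorem a neighborhood of $C$ is a trivial $\mathbf{P}^1$-bundle foliated by its fibers, and $\mathcal{F}$ has a family of compact rational leaves sweeping out a non-empty open subset of $M$. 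If instead $C$ is not $\mathcal{F}$-invariant, one invokes the uniruledness theory of Miyaoka and Bogomolov--McQuillan for foliations by curves (see~\cite{Miyaoka}, \cite[Ch.~7]{brunella}): a foliation by curves on a projective manifold whose canonical bundle is not nef is covered by rational curves tangent to it. In either case $\mathcal{F}$ has rational leaves through the points of some non-empty open set; since the restriction of $K_{\mathcal F}^{\otimes m}$ to each such leaf has strictly negative degree for every $m\geq1$, every holomorphic section of $K_{\mathcal F}^{\otimes m}$ vanishes along all of them and hence vanishes identically, so $h^0(M,K_{\mathcal F}^{\otimes m})=0$ for all $m\geq1$ and $\mathcal{F}$ has Kodaira dimension $-\infty$, contradicting that it is of general type. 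Therefore $K_{\mathcal F}$ is nef, and we conclude as above.

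The main obstacle is exactly this nefness of $K_{\mathcal F}$, and within it the case of a negative curve transverse to $\mathcal{F}$: there one has no elementary substitute for the deep positivity results of Miyaoka and Bogomolov--McQuillan, whereas the $\mathcal{F}$-invariant case is handled by hand via the smoothness of invariant curves of a regular foliation together with holomorphic Reeb stability. Note, finally, that the evenness of~$n$ is essential to the argument: it is only for~$n$ even that formula~(\ref{for:projeven}) produces the bare self-intersection number $c_1^n(T_{\mathcal F})$, with no Chern-class correction terms to interfere with the contradiction.
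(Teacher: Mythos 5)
Your argument is correct and follows the same route as the paper: formula~(\ref{for:projeven}) with empty singular set gives $c_1^n(T_{\mathcal F})=0$, while bigness plus nefness of $K_{\mathcal F}$ (via asymptotic Riemann--Roch) forces $c_1^n(K_{\mathcal F})>0$, a contradiction. The only divergence is that the paper disposes of the nefness of $K_{\mathcal F}$ by citing McQuillan's theorem, whereas you sketch a proof of it via Miyaoka/Bogomolov--McQuillan together with holomorphic Reeb stability; that sketch is sound, with the one caveat that in the non-invariant case the cited theorems only give rational leaves through the points of the negative curve, so you still need the Reeb-stability step (exactly as in your invariant case) to propagate to an open set before concluding $h^0(K_{\mathcal F}^{\otimes m})=0$.
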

\begin{proof} Let~$M$ be the manifold, $n$ its dimension, $\mathcal{F}$ the foliation. The general type assumption on \(\mathcal F\) says that \(K_{\mathcal F}\) is big, namely 
\[\frac{\log h^0(K_{\mathcal F}^{\otimes m}) }{\log m} \rightarrow n.\]
By \cite[Theorem 2, p.~51]{McQuillan}, \(K_\mathcal F\)  is also nef.  A nef and big line bundle \(L\) on an algebraic variety of dimension \(n\) satisfies \(c_1^n(L)  >0\); this can be deduced from the asymptotic Riemann-Roch formula, stating that for a nef line bundle~\(L\),
\[h^0 (L^m) = \frac{1}{n !}c_1^n(L)  m^n +O(m^{n-1}),\]
see e.g. \cite[Corollary~1.4.41]{Lazarsfeld}.  Applying this to \(L=K_{\mathcal F}\) gives \( (-1)^n c_1^n(T_{\mathcal F})  = c_1^n(K_{\mathcal F}) >0\).  On the other hand, if the manifold has even dimension and admits a foliated projective structure, by the particular case of Theorem~\ref{ourindthmproj} described in Example~\ref{thmexasimplest}, $c_1^n(T_\mathcal{F})=0$. 
\end{proof}

In higher dimensions, there are regular foliations which are not of general type, but which do not support foliated projective structures. Take for instance the product of a Kodaira fibration on a surface by a curve, producing a fibration which is not of general type (as a foliation), but which does not have any foliated projective structure. 	

We do not know if Proposition~\ref{typgen-noproj} holds true in odd dimensions. We have nevertheless the following weak version of it.

\begin{prop}\label{typgen-noaff} On a compact algebraic manifold, a regular foliation of general type cannot support a foliated affine structure.  
\end{prop}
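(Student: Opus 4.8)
The plan is to run the argument of Proposition~\ref{typgen-noproj}, but replacing the projective index formula of Example~\ref{thmexasimplest} (which only delivers the conclusion in even dimension) by the affine index theorem, Theorem~\ref{ourdescartes}, which holds in every dimension.

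First I would establish, exactly as in the proof of Proposition~\ref{typgen-noproj}, that $c_1^n(T_\mathcal{F}) \neq 0$. By the general-type hypothesis, $K_\mathcal{F}$ is big, and by McQuillan's theorem \cite[Theorem~2, p.~51]{McQuillan} it is also nef; since $M$ is algebraic, the asymptotic Riemann--Roch formula for a nef and big line bundle (\cite[Corollary~1.4.41]{Lazarsfeld}) gives $c_1^n(K_\mathcal{F}) > 0$, that is, $(-1)^n c_1^n(T_\mathcal{F}) > 0$, and in particular $c_1^n(T_\mathcal{F}) \neq 0$.

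Next, I would assume for contradiction that $\mathcal{F}$ admits a foliated affine structure. Because $\mathcal{F}$ is regular, its singular set is empty; the hypotheses of Theorem~\ref{ourdescartes} concerning the singular points (isolated, non-degenerate, with non-vanishing Christoffel symbol) are then vacuously fulfilled, and the theorem reads $c_1^n(T_\mathcal{F}) = (-1)^n \sum_{i \in \emptyset} (\nu_1^{(i)} \cdots \nu_n^{(i)})^{-1} = 0$. This contradicts the previous step, so no such structure can exist.

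I do not expect a genuine obstacle here, since all the needed ingredients have already been proved: the whole point is simply to notice that Theorem~\ref{ourdescartes} is dimension-free, which is precisely what makes it the right substitute for the projective index formula when $n$ is odd (and explains why this statement is phrased for affine structures in all dimensions rather than projective ones in even dimensions). For $n$ even, the proposition would in any case also follow from Proposition~\ref{typgen-noproj} together with the fact that a foliated affine structure induces a foliated projective one; the content of the present statement is therefore the odd-dimensional case. The only minor point to keep in mind is that a compact algebraic manifold is projective, so the cited form of Riemann--Roch indeed applies.
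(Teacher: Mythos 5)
Your proposal is correct and follows exactly the paper's own argument: apply Theorem~\ref{ourdescartes}, whose left-hand side vanishes for a regular foliation, to get $c_1^n(T_\mathcal{F})=0$, and contrast this with the positivity of $c_1^n(K_\mathcal{F})$ established in the proof of Proposition~\ref{typgen-noproj} via bigness and nefness of $K_\mathcal{F}$. No gaps.
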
 

\begin{proof}	
	In the presence of a foliated affine structure, it follows from Theorem~\ref{ourdescartes} that~$c_1^n(T_\mathcal{F})=0$, the left hand side vanishing by the absence of singular points. The arguments in the proof of Proposition~\ref{typgen-noproj} allow to conclude.
\end{proof}

Many families of  regular foliations are given by  \emph{characteristic foliations} on hypersurfaces of general type in compact symplectic manifolds. They are those generated by the distribution given by   the kernel of the restriction of the symplectic form to the hypersurface, see e.g. \cite{HwangViehweg}.   By the adjunction formula  and the  fact that the top wedge power of the normal bundle of the foliation is trivial, the canonical bundle of such a foliation is isomorphic to the canonical bundle of the hypersurface, so the characteristic foliation is of general type, and does not carry a foliated affine structure by Proposition \ref{typgen-noaff}. We do not know if these foliations admit  foliated projective structures;  our index formulae do not give any obstructions whatsoever in this case (we leave to the reader to check that,  for all these foliations and in all instances of Theorem~\ref{ourindthmproj},  the right-hand side gives always zero).

These foliations occur in odd dimensions. In even ones, beyond the case of surfaces, we do not know the extent to which our index formulae give relevant obstructions for the existence of foliated projective structures. We do not seem to have enough examples of regular foliations on   manifolds of even dimension.  

\subsection{A classification of  foliated affine and projective structures for regular foliations on surfaces} \label{sec:class2dreg}

We will now classify the foliated affine and projective structures  for the foliations appearing in Corollary~\ref{c: classification}. We begin with  the following Lemma.
\begin{lemma}\label{prop:def-vf} Let $M$ be a compact manifold, $X$ a nowhere vanishing vector field on~$M$, $\mathcal{F}$ the regular foliation induced by~$X$. 
	\begin{itemize}
		\item The spaces of foliated affine and projective structures of~$\mathcal{F}$ are both one-dimensional. The natural map from affine to projective structures is a double cover ramified at the affine structure induced by~$X$.
		\item  Assume~$\sigma:M\to M$ is a fixed-point-free involution such that~$D\sigma(X)=-X$, and let~$N=M/\sigma$ and~$\mathcal{G}$  be  the foliation on~$N$ induced by~$X$.  Then  the space of foliated projective structures on~$\mathcal{G}$ is one-dimensional, and the only foliated affine structure on~$\mathcal{G}$ is the one induced by~$X$. 	
	\end{itemize}
\end{lemma}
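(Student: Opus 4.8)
The plan is to exploit the fact that the moduli spaces of foliated affine and projective structures on a foliation~$\mathcal{F}$ are affine spaces directed by~$H^0(K_\mathcal{F})$ and~$H^0(K_\mathcal{F}^2)$ respectively, as established in Sections~\ref{sec:obst-aff} and~\ref{ss: foliated projective structure}. Since~$X$ is nowhere vanishing, it generates~$T_\mathcal{F}$ globally, so~$T_\mathcal{F}$ is holomorphically trivial and~$K_\mathcal{F}\cong \mathcal{O}_M$, whence~$K_\mathcal{F}^2\cong\mathcal{O}_M$ as well. On a compact connected manifold~$H^0(\mathcal{O}_M)=\mathbf{C}$, so both~$H^0(K_\mathcal{F})$ and~$H^0(K_\mathcal{F}^2)$ are one-dimensional; moreover~$X$ itself defines a foliated translation structure (hence in particular an affine one, and hence a projective one), so both moduli spaces are nonempty. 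This gives the first assertion of the first item: the spaces of foliated affine and projective structures are one-dimensional affine spaces.

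For the double-cover statement in the first item, I would work with Christoffel symbols relative to the trivializing field~$X$. A foliated affine structure corresponds to a foliated connection~$\nabla$ on~$T_\mathcal{F}$, determined by the single number~$a=\nabla(X)$ (a global holomorphic function on~$M$, hence constant by compactness), by formula~\eqref{conn-fun}. A foliated projective structure corresponds to a projective connection~$\Xi$, determined by the constant~$r=\Xi(X)$, by~\eqref{christproj}. The natural map sends~$\nabla$ to the projective connection of formula~\eqref{for:affaproj}; evaluated on~$X$ this gives~$\Xi(X)=-\tfrac12 a^2 + X(a) = -\tfrac12 a^2$ since~$a$ is constant. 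So in the coordinates~$a$ and~$r$ the map is~$a\mapsto r=-\tfrac12 a^2$, a degree-two branched cover of~$\mathbf{C}$ ramified exactly over~$a=0$, which is the structure induced by~$X$ (the one with~$\nabla(X)\equiv 0$). I should also check this map is well defined and that every projective connection with~$r\neq 0$ arises from exactly two affine ones — immediate from the algebra — and that these are genuinely distinct foliated affine structures, which follows because distinct constants~$a$ give connections differing by a nonzero section of~$K_\mathcal{F}$.

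For the second item I would pass to the double cover~$M\to N$. The involution~$\sigma$ with~$D\sigma(X)=-X$ descends~$X$ to a nowhere-vanishing section of~$T_\mathcal{G}$ on~$N=M/\sigma$ only up to sign; concretely~$T_\mathcal{G}$ is the line bundle on~$N$ whose pullback to~$M$ is~$T_\mathcal{F}=\mathcal{O}_M$ with the~$\sigma$-action~$v\mapsto -D\sigma(v)$, i.e. a $2$-torsion (possibly trivial) line bundle~$L$ with~$L^2\cong\mathcal{O}_N$. Hence~$K_\mathcal{G}^2\cong\mathcal{O}_N$, so~$H^0(K_\mathcal{G}^2)=\mathbf{C}$ and the space of foliated projective structures on~$\mathcal{G}$ is one-dimensional (nonempty: the projective structure on~$\mathcal{G}$ induced by~$X$ — note that although~$X$ is only defined up to sign downstairs, the projective connection~\eqref{for:affaproj} with~$\nabla(X)\equiv0$, namely~$\Xi\equiv0$, is sign-invariant, so it descends). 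On the other hand~$H^0(K_\mathcal{G}) = H^0(L)$: a foliated affine structure on~$\mathcal{G}$ pulls back to one on~$\mathcal{F}$, i.e. to a constant~$a$, but the sign flip~$D\sigma(X)=-X$ forces, via~\eqref{conn-fun} applied to~$fZ$ with the transition~$-1$, that~$a$ must change sign under~$\sigma$ while being~$\sigma$-invariant (it is the pullback of a function on~$N$), hence~$a=0$. Equivalently~$L$ is a nontrivial — or trivial — $2$-torsion bundle but in either case~$H^0(L)$ contains no section compatible with the equivariance forcing the affine structure to be anything but the one induced by~$X$; the clean way to say this is that the affine-to-projective double cover over~$N$ has its ramification point as the unique fixed point of the residual~$\mathbf{Z}/2$, which is precisely the structure induced by~$X$. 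I expect the main obstacle to be bookkeeping the equivariance: making fully precise how the condition~$D\sigma(X)=-X$ acts on Christoffel symbols and on sections of~$K_\mathcal{G}$ versus~$K_\mathcal{F}$, and confirming that the only $\sigma$-equivariant affine connection is the flat one~$\nabla(X)\equiv 0$, rather than this being obscured by the ambiguity in the choice of~$X$ downstairs.
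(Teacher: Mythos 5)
Your proposal is correct and follows essentially the same route as the paper: both reduce everything to the constant Christoffel symbols $a=\nabla(X)$ and $r=\Xi(X)$, identify the affine-to-projective map as $a\mapsto -\tfrac12 a^2$ via formula~\eqref{for:affaproj}, and for the quotient use that $\nabla(-X)=-\nabla(X)$ forces $a=0$ while $\Xi(-X)=\Xi(X)$ lets every projective structure descend. The extra packaging via $H^0(K_\mathcal{F})\cong H^0(\mathcal{O}_M)=\mathbf{C}$ and the $2$-torsion bundle on $N$ is consistent with, but not needed beyond, the paper's more direct argument.
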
	
\begin{proof}  A foliated translation structure along~$\mathcal{F}$ is induced by~$X$, and both the space  of foliated affine structures and that of foliated projective ones are non-empty. Let~$\nabla$ be the connection of a foliated affine structure on~$\mathcal{F}$. The holomorphic function~$\nabla(X)$ is a constant from which~$\nabla$ can be recovered. In the same way, foliated projective connections can be recovered from the corresponding Christoffel symbols of~$X$. The last assertion of the first part follows from formula~(\ref{for:affaproj}).
	
For the second part, notice that, from formula~(\ref{conn-fun}), $\nabla(\sigma_*X)=\nabla(-X)=-\nabla(X)$, that the affine structure associated to~$\nabla$ will be preserved by~$\sigma$ if and only if~$\nabla(X)\equiv 0$, this is, if it is the one induced by~$X$. If~$\Xi$ is a foliated projective connection, from formula~(\ref{christproj}), $\Xi(\sigma_*X)=\Xi(-X)=\Xi(X)$, and every foliated projective structure along~$\mathcal{F}$ is preserved by the involution, inducing one on~$\mathcal{G}$. Reciprocally, every foliated projective structure on~$\mathcal{G}$ induces one on~$\mathcal{F}$.
\end{proof}

Let us now come to the classification.

\subsubsection{Linear foliations on tori}  Foliated affine and projective structures  are induced by nowhere vanishing vector fields and are covered by the first part of Lemma~\ref{prop:def-vf}.

\subsubsection{Inoue surfaces} The existence of foliated affine structures on these surfaces was  discussed in Example~\ref{exam-inoue}.

Consider an Inoue surface~$S_M$~\cite[\S 2]{inoue}. It is the quotient of~$\mathbf{H}\times \mathbf{C}$ under the action of a semidirect product~$\mathbf{Z}\ltimes \Gamma$; let~$\pi:\mathbf{H}\times \mathbf{C}\to S_M$ denote the quotient map. Let~$\mathcal{F}$ be either the vertical or the horizontal foliation on~$S_M$, and let~$\omega$  be a section of~$K_\mathcal{F}^2$. The preimage of~$\mathcal{F}$ in~$\mathbf{H}\times \mathbf{C}$ is generated by a coordinate vector field~$X$. The contraction of~$X^{\otimes 2}$ with~$\pi^*\omega$ gives a holomorphic function on~$\mathbf{H}\times \mathbf{C}$ which is invariant under the action of~$\Gamma$ and which,  by~\cite[Lemma~3]{inoue}, is constant. In particular, the function~$\pi^*\omega(X^{\otimes 2})$ descends to~$S_M$. If it were not the zero constant, $X^{\otimes 2}$ would descend to~$S_M$ as well, but $S_M$ has no holomorphic vector fields~\cite[Prop.~2]{inoue} and neither do its double covers, which are Inoue surfaces of the same kind.   We conclude that~$\omega$ vanishes identically, that~$K_\mathcal{F}^2$ has no nonzero sections, and that the natural foliated affine structures  are rigid as projective ones (a variation of these arguments shows that~$h^0(K_\mathcal{F})=0$, that they are also rigid as affine ones).

Consider now an Inoue surface~$S^{(+)}$~\cite[\S 3]{inoue}. It is the quotient of~$\mathbf{H}\times \mathbf{C}$ under the action of a group that preserves the coordinate vector field on the second factor, and which induces a nowhere-vanishing vector field~$X$ on~$S^{(+)}$. The foliated affine and projective structures on the induced foliation are described by the first part of Lemma~\ref{prop:def-vf}.

Lastly, consider an Inoue surface~$S^{(-)}$~\cite[\S 4]{inoue}. It has an Inoue surface of type~$S^{(+)}$ as an unramified double cover~$\rho:S^{(+)}\to S^{(-)}$ induced by a fixed-point free involution of~$S^{(+)}$ acting upon~$X$ by changing its sign. The second part of Lemma~\ref{prop:def-vf} classifies foliated structures on the associated foliation.

By the results in~\cite{brunella-nonsing}, these foliations are the only ones on Inoue surfaces, and the above arguments give a complete classification of foliated affine and projective structures on them.

\subsubsection{Hopf surfaces}  Let~$S$ be a Hopf surface, quotient of~$\mathbf{C}^2\setminus\{0\}$ under the action of a group~$G$ containing a contraction~$g$, $\mathcal{F}$ a foliation on~$S$, as described in Example~\ref{exam-hopf}. (By the results in~\cite{brunella-nonsing}, these are all the foliations on Hopf surfaces.) Let us follow the notations of Example~\ref{exam-hopf}.

If~$S$ is a primary Hopf surface (if~$G$ is generated by~$g$) and~$\mathcal{F}$ is tangent to a nowhere-vanishing vector field~$X$ (like those induced by linear and ``Poincar\'e-Dulac'' ones), the first part of Lemma~\ref{prop:def-vf} gives a complete description.  For these foliations, for the associated secondary Hopf surfaces, following Kato's classification~\cite{kato-hopf, kato-hopf-err}, the action of~$G$ will preserve the vector field~$X$ (and the situation is covered by the first part of Lemma~\ref{prop:def-vf}) except in the  situation which we now describe. 

Let~$S$ be the primary elliptic Hopf surface with~$g$ of the form~$(x,y)\mapsto(\alpha x,\alpha y)$.  Let~$X$ be the vector field on~$S$ induced by~$Ay\indel{x}+Bx\indel{y}$ ($A,B\neq 0$); it generates a turbulent foliation~$\mathcal{F}$ with respect to the elliptic fibration induced by~$(x,y)\mapsto [x:y]$. Consider the group~$G$ of transformations of~$\mathbf{C}^2\setminus\{0\}$ that contains~$g$, a (possibly trivial) subgroup~$H$ of linear diagonal elements of finite order plus a nontrivial  element~$\rho$ of finite order of the form~$(x,y)\mapsto (\mu x,-\mu y)$ (notice that~$\rho^2\in H$). Let~$S'$ be the quotient of~$\mathbf{C}^2\setminus\{0\}$  under the action of the subgroup of~$G$ generated by~$g$ and~$H$;  it has a vector field~$X'$ induced by~$X$.  Let~$S''$ be the quotient of~$\mathbf{C}^2\setminus\{0\}$ under the full action of~$G$. The  surface~$S''$  is the quotient of~$S'$ under an involution induced by~$\rho$, which acts upon~$X'$ by changing its sign,  inducing the foliation~$\mathcal{F}''$ on~$S''$.    The second part of Lemma~\ref{prop:def-vf} classifies the foliated structures on~$\mathcal{F}''$.

Consider now a foliation~$\mathcal{F}$ on the primary Hopf surface~$S$ induced by~$\indel{x}$, let~$\mathcal{H}$ be the associated foliation on~$\mathbf{C}^2\setminus\{0\}$ and~$\rho$ be the section of~$K_\mathcal{H}^2$ whose contraction with~$(\indel{x})^{\otimes 2}$ is the constant~$1$. Let~$\omega$ be a section of~$K_\mathcal{F}^2$. Let~$F$ be the holomorphic function on~$\mathbf{C}^2\setminus\{0\}$ such that~$\pi^{*}\omega=F\rho$. It satisfies the relation~$\alpha^2 F(\alpha x+\lambda y^n,\beta y)=F(x,y)$, and, reciprocally, every~$F$ satisfying this relation gives  a section of~$K_\mathcal{F}^2$. This implies,  if~$\lambda=0$, that~$x^2F(x,y)$ is invariant,  or, if~$\lambda\neq 0$, that~$y^{2n}F(x,y)$ is.  This holomorphic function descends to~$S$ and is thus constant, and this implies that the holomorphic function~$F$ has poles  unless it vanishes identically. This proves that~$h^0(K^2_\mathcal{F})=0$. In essentially the same way, $h^0(K_\mathcal{F})=0$:  the foliated affine structures are rigid both as affine and as projective ones. This rigidity is inherited by the foliated structures induced on the associated secondary Hopf surfaces.

This gives a complete classification of foliated affine and projective structures on Hopf surfaces. 

\subsubsection{Quotients of the bidisk} In the quotient of~$\mathbf{D}\times\mathbf{D}$ under the action of a lattice  in~$\mathrm{Aut}(\mathbf{D} \times \mathbf{D})$ which is not virtually a product, the vertical (or horizontal) foliation~$\mathcal{F}$ carries, by construction, a foliated projective structure  which is not affine. For this foliation, $\mathrm{kod}(\mathcal{F})=-\infty$~\cite[Ch.~9, Section~5]{brunella}, and, in particular, $h^0(K^2_\mathcal{F})=0$: the foliated projective structure is a rigid one.

\subsubsection{Turbulent foliations} In this case, the results of Example~\ref{ex:turb} already give a classification of foliated affine and projective structures on them. For instance, in a turbulent foliation where in the  tangency divisor of the foliation and the fibration the non-transverse fibers appear simply, the foliated affine (resp. projective) foliated   structures are in correspondence with the   affine (resp. projective) structures on the base which have Fuchsian singularities at the points corresponding to the non-transverse fibers.

\subsubsection{Regular elliptic fibrations}Let~$\pi:S\to C$ be an elliptic fibration without singular fibers. Let~$Z$ be a vector field tangent to~$\pi$ like those constructed in Example~\ref{ex: elliptic fibrations}, $D_Z$ its associated divisor. By construction, $-D_Z$ is the divisor of the \emph{relative canonical bundle} $K_{S/C}=K_S\otimes \pi^*K_C^*$ (see~\cite[Ch.~2, Section~3]{brunella}); it can be very explicitly described~\cite[Ch.~V, Section~12]{bphv}.

By formula~(\ref{conn-fun}), through their evaluation on~$Z$,  the foliated connections on~$T_\mathcal{F}$ are in correspondence with the meromorphic functions~$g$ on~$C$ which are either identically zero or which, denoting by~$(\cdot)$ the divisor of a meromorphic function, are such that~$(\pi^*g)\geq D_Z$. Similarly, by~(\ref{christproj}), the projective connections on~$\mathcal{F}$ are in correspondence with the meromorphic functions~$g$ on~$C$ for which~$(\pi^*g)\geq 2D_Z$.

\subsubsection{Isotrivial fibrations of higher genus} Let~$\pi:S\to C$ be a regular isotrivial fibration of higher genus with typical fiber~$F$. Since neighboring fibers are canonically identified,  there is a globally defined foliation  transverse to~$\pi$ realizing this identification. Its global holonomy representation gives a group~$\Gamma\subset \mathrm{Aut}(F)$. When pulled-back to the finite covering of \(C\) defined by the kernel of the holonomy representation, the fibration becomes a product. Since, on a product, foliated projective structures are constant (such a structure is given by a map from the base of the fibration to the moduli of projective structures on a fixed curve, which is affine), the foliated  projective structures on the fibration  are in correspondence with the projective structures on~$F$ invariant by~$\Gamma$.

%\bibliography{refs}
%\bibliographystyle{amsalpha}
%\end{document}

\providecommand{\bysame}{\leavevmode\hbox to3em{\hrulefill}\thinspace}
\providecommand{\href}[2]{#2}

\end{document}